\newtheorem{Def}{\bf Definition}[section]
\newtheorem{Thm}[Def]{\bf Theorem}
\newtheorem{Lem}[Def]{\bf Lemma}
\newtheorem{Pro}[Def]{\bf Proposition}
\newtheorem{Rem}[Def]{\bf Remark}
\newtheorem{step}{\bf Step}
\newtheorem*{claim}{\bf Claim}
\newtheorem*{conv}{\bf Convention}
\newtheorem{ThmA}{\bf Theorem}
\newtheorem{CorA}[ThmA]{\bf Corollary}
\newcommand{\R}{\mathbb{R}}
\newcommand{\C}{\mathbb{C}}
\newcommand{\F}{\mathbb{F}}
\newcommand{\N}{\mathbb{N}}
\newcommand{\B}{\mathbb{B}}
\newcommand{\K}{\mathbb{K}}
\newcommand{\M}{\mathbb{M}}
\newcommand{\G}{\mathbb{G}}
\newcommand{\Ad}{\operatorname{Ad}}
\newcommand{\id}{\text{\rm id}}
\newcommand{\Aut}{\operatorname{Aut}}
\newcommand{\Tr}{\mathord{\text{\rm Tr}}}
\newcommand{\ovt}{\mathbin{\overline{\otimes}}}
\newcommand{\otm}{\otimes_{\rm min}}
\newcommand{\ota}{\otimes_{\rm alg}}
\newcommand{\op}{\circ}
\title{\bf \Large{Cartan subalgebras of tensor products of \\free quantum group factors with arbitrary factors}}
\author{Yusuke Isono\thanks{Research Institute for Mathematical Sciences, Kyoto University, 606-8502, Kyoto, Japan \protect \\  E-mail: \texttt{isono@kurims.kyoto-u.ac.jp}}}
\date{}
\begin{document}
\maketitle

\begin{abstract}
	Let $\G$ be a free (unitary or orthogonal) quantum group. We prove that for any non-amenable subfactor $N\subset L^\infty(\G)$, which is an image of a faithful normal conditional expectation, and for any $\sigma$-finite factor $B$, the tensor product $N \ovt B$ has no Cartan subalgebras. 
This generalizes our previous work that provides the same result when $B$ is finite. 
In the proof, we establish Ozawa--Popa and Popa--Vaes's weakly compact action on the continuous core of $L^\infty(\G)\ovt B$ as the one \textit{relative to $B$}, by using an operator valued weight to $B$ and the central weak amenability of $\widehat{\G}$. 
\end{abstract}

\section{\bf Introduction}

	Let $M$ be a von Neumann algebra. A \textit{Cartan subalgebra}  $A \subset M$ is an abelian von Neumann subalgebra, which is an image of a faithful normal conditional expectation, such that (i) $A$ is maximal abelian and (ii) the normalizer $\mathcal{N}_M(A)$ generates $M$ as a von Neumann algebra \cite{FM75}. Here $\mathcal{N}_M(A)$ is given by $\{ u \in \mathcal{U}(M) \mid uAu^*=A \}$. 

	The group measure space construction of Murray--von Neumann gives a typical example of a Cartan subalgebra. Indeed, the canonical subalgebra $L^\infty(X,\mu) \subset L^\infty (X,\mu) \rtimes \Gamma$ is Cartan whenever the given action $\Gamma\curvearrowright (X,\mu)$ is free. More generally, one can associate any (not necessarily free) group action with a Cartan subalgebra by its orbit equivalence relation. Conversely when $M$ has separable predual, any Cartan subalgebra $A \subset M$ is realized by an orbit equivalence relation (with a cocycle), and hence by a group action. Thus the notion of Cartan subalgebras is closely related to group actions. 
In particular if $M$ has no Cartan subalgebras, then it can not be constructed by any group actions. It was an open problem to find such a von Neumann algebra.

	The first result for this direction was given by Connes. He constructed a $\rm II_1$ factor which is not isomorphic to its opposite algebra, so it is particularly not isomorphic to any group action (without cocycle) von Neumann algebra \cite{Co74}. 
Voiculescu then provided a complete solution to this problem, by proving free group factors $L\F_n$ $(n\geq 2)$ have no Cartan subalgebras \cite{Vo95}. 
He used his celebrated \textit{free entropy} techniques, and it was later developed to give other examples \cite{Sh00,Ju05}.

	After these pioneering works, Ozawa and Popa introduced a completely new framework to study this subject. Among other things, they proved that free group factors are \textit{strongly solid} \cite{OP07}, that is, for any diffuse amenable subalgebra $A \subset L\F_n$, the von Neumann algebra generated by the normalizer $\mathcal{N}_{L\F_n}(A)$ remains amenable. Since $L\F_n$ itself is non-amenable, this immediately yields that $L\F_n$ has no Cartan subalgebras. Note that strong solidity is stable under taking subalgebras and hence any non-amenable subfactor of $L\F_n$ also has no Cartan subalgebras. 

	The proof of Ozawa and Popa consist of two independent steps. First, by using weak amenability of $\F_n$, they observed that the normalizer group acts \textit{weakly compactly} on a given amenable subalgebra. Second, combining this weakly compact action with \textit{Popa's deformation and intertwining techniques}  \cite{Po01,Po03}, they constructed a state which is central with respect to the normalizer group. Thus they obtained that the normalizer group generates an amenable von Neumann algebra. 
Since these techniques are applied to any finite crossed product $B \rtimes \F_n$ with the W$^*$CMAP (see Subsection \ref{WCMAP}), 
they also proved that for any finite factor $B$ with the W$^*$CMAP, the tensor product $L\F_n \ovt B$ has no Cartan subalgebras. 

	To remove the W$^*$CMAP assumption on $B\rtimes \F_n$, Popa and Vaes introduced a notion of \textit{relative weakly compact action} \cite{PV11}. This is an appropriate ``relativization'' of the first step above in the view of the relative tensor product $L^2(B\rtimes\F_n)\otimes_B L^2(B\rtimes \F_n)$. In particular this \textit{only} requires the weak amenability of $\F_n$. Thus by modifying the proof in the second step above, they obtained, among other things, the tensor product $L\F_n \ovt B$ has no Cartan subalgebras for any finite factor $B$. 

	The aim of the present paper is to develop these techniques to study type III von Neumann algebras. More specifically we replace the free group factor $L\F_n$ with the \textit{free quantum group factor}, which is a type III factor in most cases. 
This has already been studied by ourselves \cite{Is12,Is13} when $B$ is finite. In the general case however, namely, when $B$ is a type III factor, we could not provide a satisfactory answer to this problem, and this will be discussed in this article.

	We note that the first solution to the Cartan subalgebra problem for type III factors in our framework was obtained by Houdayer and Ricard \cite{HR10}. They followed the proof of \cite{OP07} by exploiting techniques in \cite{CH08}, that is, the use of Popa's deformation and intertwining techniques together with the \textit{continuous core decomposition}. While Houdayer and Ricard followed the idea of \cite{OP07}, our approach in \cite{Is12,Is13} was based on \cite{PV12}. In particular, in the second step above, we made use of \textit{Ozawa's condition (AO)} \cite{Oz03} (or \textit{bi-exactness}, see Subsection \ref{Free quantum groups and bi-exactness}) at the level of the continuous core. 
In this article, we stand again on the use of bi-exactness, and we will further develop  techniques of \cite{Is13}. 
See \cite{BHR12} for other examples of type III factors with no Cartan subalgebras, and \cite{CS11,CSU11} for other works on Cartan subalgebras of bi-exact group von Neumann algebras. 

	The following theorem is the main observation of this article. This should be regarded as a generalization of \cite[Theorem B]{Is13}, and this allows us to obtain a satisfactory answer to the Cartan problem in the type III setting. 
See Section \ref{Preliminaries} for items in this theorem.

\begin{ThmA}\label{thmA}
	Let $\G$ be a compact quantum group with the Haar state $h$, and $B$ a type $\rm III_1$ factor with a faithful normal state $\varphi_B$. Put $M:=L^\infty (\G)\ovt B$ and $\varphi:=h\otimes \varphi_B$. 
Let $C_{\varphi_B}(B)$ and $C_\varphi(M)$ be continuous cores of $B$ and $M$ with respect to $\varphi_B$ and $\varphi$, and regard $C_{\varphi_B}(B)\subset C_\varphi(M)$. Let $\Tr$ be a semifinite trace on $C_\varphi(M)$ with $\Tr|_{C_{\varphi_B}(B)}$ semifinite, and $p\in C_\varphi(M)$ a projection with $\Tr(p)<\infty$. 

	Assume that $\widehat{\G}$ is bi-exact and centrally weakly amenable with Cowling--Haagerup constant 1. Then for any amenable von Neumann subalgebra $A\subset pC_\varphi(M)p$, we have either one of the following conditions.
	\begin{itemize}
		\item[$\rm (i)$] We have $A\preceq_{C_\varphi(M)} C_{\varphi_B}(B)$.
		\item[$\rm (ii)$] The von Neumann algebra $\mathcal{N}_{pC_\varphi(M)p}(A)''$ is amenable relative to $C_{\varphi_B}(B)$.
	\end{itemize}
\end{ThmA}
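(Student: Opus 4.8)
The plan is to follow the two-step strategy of Ozawa--Popa as relativized by Popa--Vaes, but carried out entirely at the level of the continuous core $C_\varphi(M)$ and \emph{relative to} $C_{\varphi_B}(B)$. First, I would use the central weak amenability of $\widehat{\G}$ with Cowling--Haagerup constant $1$ to produce an approximating net of normal completely bounded $L^\infty(\G)$-bimodular maps on $L^\infty(\G)\ovt B$ (coming from central multipliers on $\widehat\G$) with $\limsup\|\cdot\|_{cb}\le 1$, and promote these to maps on the core $C_\varphi(M)$ that commute with the trace-scaling flow and are $C_{\varphi_B}(B)$-bimodular. Feeding these into the operator-valued-weight machinery of \cite{Is13} (the operator valued weight from $M$ to $B$, extended to the core), I would deduce that the normalizer $\mathcal N_{pC_\varphi(M)p}(A)''$ acts \emph{weakly compactly on $A$ relative to $C_{\varphi_B}(B)$} in the sense of Popa--Vaes; this is the analogue of the first step of \cite{OP07}, and the key new input is that the weak compactness is taken in the $B$-relative Hilbert bimodule $L^2(C_\varphi(M))\otimes_{C_{\varphi_B}(B)}L^2(C_\varphi(M))$ rather than the plain Hilbert space.

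Second, I would run a dichotomy argument using bi-exactness of $\widehat\G$ at the level of the core. The bi-exactness hypothesis gives, for the inclusion $C_{\varphi_B}(B)\subset C_\varphi(M)$, the relevant condition (AO)-type boundary map, so that for any subalgebra whose normalizer does \emph{not} embed into $C_{\varphi_B}(B)$ inside $C_\varphi(M)$ one obtains a $C_{\varphi_B}(B)$-central state on a suitable $C^*$-algebra built from $C_\varphi(M)$ and its commutant on the $B$-relative bimodule. Combining this central state with the weakly compact (relative to $B$) action from Step 1 — exactly as in the proof of \cite[Theorem B]{Is13}, but now carrying the $B$-relative structure throughout — yields a $\mathcal N_{pC_\varphi(M)p}(A)''$-central state on the corner of $\langle C_\varphi(M), e_{C_{\varphi_B}(B)}\rangle$, which by the standard characterization of relative amenability (Ozawa--Popa, Popa--Vaes) means precisely that $\mathcal N_{pC_\varphi(M)p}(A)''$ is amenable relative to $C_{\varphi_B}(B)$. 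So either $A\preceq_{C_\varphi(M)}C_{\varphi_B}(B)$, giving alternative (i), or we are in the second case and alternative (ii) holds.

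The main obstacle, which is exactly the technical heart flagged in the abstract, is the \emph{relativization of the weakly compact action to $B$ at the level of the core}. In the finite setting of \cite{Is13} the trace gives a single tracial Hilbert space and the multiplier maps are automatically well behaved; here $\varphi$ is only a state, $B$ is type $\mathrm{III}_1$, and one must pass to the crossed product by the modular flow, check that the central multipliers on $\widehat\G$ (which a priori only act on $L^\infty(\G)$) extend to $\varphi$-preserving-flow-equivariant maps on $C_\varphi(M)$ that are $C_{\varphi_B}(B)$-bimodular and contractive \emph{in the $B$-relative norm}, and verify the almost-invariance estimates for the net of vectors in $L^2(C_\varphi(M))\otimes_{C_{\varphi_B}(B)}L^2(C_\varphi(M))$ under $\mathrm{Ad}(u)$ for $u\in\mathcal N_{pC_\varphi(M)p}(A)''$. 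Making the operator-valued weight to $B$ interact correctly with these approximating maps — so that one genuinely gets \emph{relative} weak compactness and not merely the non-relative statement — is the step requiring the most care; once it is in place, the dichotomy via bi-exactness is a fairly direct adaptation of \cite{Is13}.
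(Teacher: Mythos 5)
Your proposal reproduces the correct two-step architecture (relative weak compactness from central weak amenability, then a dichotomy from bi-exactness), and you correctly flag that the hard point is making the weak compactness genuinely relative to $B$ at the level of the core. But as written there are two genuine gaps, both at places where the paper has to do something more specific than what you describe.

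First, in Step 1 it is not enough to ``promote'' the central multipliers $m_{a_i}\otimes\id_B$ to $C_{\varphi_B}(B)$-bimodular maps on $C_\varphi(M)$: those maps are not of a form that lets you define the required normal functionals on the von Neumann algebra generated by $\pi_H(C_\varphi(M))$ and $\theta_H(C_\varphi(M)^\op)$ acting on $L^2(C_\varphi(M))\otimes_B L^2(C_\varphi(M))$. The paper must first use the amenability of $\R$ (via the weak containment $L^2(\widetilde M)\prec L^2(\widetilde M)\otimes_M L^2(\widetilde M)$ and a self-dual Hilbert-module argument) to produce maps of the form $\lambda_q^*E_M(z^*\,\cdot\,y)\lambda_p$, and then compose these with the multiplier maps; the identity $E_{B\rtimes\R}\circ E_M=E_B$ for the canonical operator valued weights is what makes the composite a finite sum of $d^*E_B(z^*\,\cdot\,y)c$ with $y,z\in\mathfrak{n}_{E_B}$, which is exactly the form needed. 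Relatedly, because these maps live on $\widetilde M$ rather than on the commutant algebra, the argument needs $A$ itself to be amenable (not merely relatively amenable, as in Popa--Vaes) and needs $B$ to be type $\rm III_1$ so that $\pi_H(\widetilde M)$ and $\theta_H(\widetilde M^\op)$ generate an algebraic tensor product; neither point appears in your sketch. Second, in Step 2 you invoke ``the relevant condition (AO)-type boundary map for the inclusion $C_{\varphi_B}(B)\subset C_\varphi(M)$.'' Bi-exactness of $\widehat\G$ does not directly give such a relative (AO) condition at the core level --- that is essentially the condition $\rm(AOC)^+$ of the earlier papers, which this paper explicitly avoids assuming. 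Instead the paper interposes the bimodule $K=L^2(\mathcal M)\otimes_{(N\ovt X)}L^2(\mathcal M)$, uses amenability of $\R$ once more to get $L^2(\mathcal M)\prec K$, applies bi-exactness only to connect $H$ with $K$ modulo compacts on a dense subalgebra, and then uses the weak amenability a second time to extend the resulting state. Without these intermediate steps your dichotomy argument does not go through from the stated hypotheses.
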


	As a consequence of the main theorem, we obtain the following corollary. This is the desired one since our main example, free quantum groups, satisfy assumptions in this corollary. See \cite[Theorem C]{Is13} for other examples of quantum groups satisfying these assumptions. 
Below we say that an inclusion of von Neumann algebras $A\subset M$ is \textit{with expectation} if there is a faithful normal conditional expectation.

\begin{CorA}\label{corB}
	Let $\G$ be a compact quantum group as in Theorem \ref{thmA}. Then for any non-amenable subfactor $N \subset L^\infty(\G)$ with expectation and any $\sigma$-finite factor $B$, the tensor product $N \ovt B$ has no Cartan subalgebras.
\end{CorA}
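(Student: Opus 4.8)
The plan is to deduce Corollary~\ref{corB} from Theorem~\ref{thmA} by passing to the continuous core. First I would reduce to the case that $B$ is a type $\mathrm{III}_1$ factor. Given an arbitrary $\sigma$-finite factor $B$, set $\tilde B:=B\ovt R_\infty$ with $R_\infty$ the hyperfinite type $\mathrm{III}_1$ factor; then $\tilde B$ is again a $\sigma$-finite factor, and it is of type $\mathrm{III}_1$ because tensoring any factor with a type $\mathrm{III}_1$ factor yields a type $\mathrm{III}_1$ factor. Since $R_\infty$ is amenable it carries a Cartan subalgebra $D$, so if $A\subset N\ovt B$ were a Cartan subalgebra, then $A\ovt D$ would be a Cartan subalgebra of $(N\ovt B)\ovt R_\infty=N\ovt\tilde B$: it is maximal abelian as a tensor product of maximal abelian subalgebras, its normalizer contains $\{u\otimes v:u\in\mathcal N_{N\ovt B}(A),\ v\in\mathcal N_{R_\infty}(D)\}$ and hence generates $N\ovt\tilde B$, and it is the range of the faithful normal conditional expectation $E_A\otimes E_D$. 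As $N\subset L^\infty(\G)$ is untouched, it suffices to prove the corollary when $B$ is a type $\mathrm{III}_1$ factor with a faithful normal state.

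So suppose $A\subset N\ovt B$ is a Cartan subalgebra; I aim for a contradiction. Let $E_A\colon N\ovt B\to A$ be the unique conditional expectation onto the maximal abelian subalgebra $A$, fix a faithful normal state $\psi_0$ on $A$, and put $\psi:=\psi_0\circ E_A$, so that $\psi\circ E_A=\psi$ and hence $\sigma^\psi$ fixes $A$ pointwise. Extend $\psi$ to $M:=L^\infty(\G)\ovt B$ through $E_N\otimes\mathrm{id}_B\colon M\to N\ovt B$. Since the continuous core does not depend on the chosen faithful normal state, Theorem~\ref{thmA} applies verbatim with $\psi$ in place of $h\otimes\varphi_B$. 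Now $C_\psi(A)=A\ovt L(\R)=:\tilde A$ is an abelian von Neumann subalgebra of $C_\psi(N\ovt B)\subset C_\psi(M)$, and by the standard fact that the core of a Cartan inclusion is again a Cartan inclusion --- write $A\subset N\ovt B$ via its measured equivalence relation (\cite{FM75}) and pass to the Maharam extension --- $\tilde A$ is a \emph{Cartan} subalgebra of $C_\psi(N\ovt B)$. Choosing a projection $q\in\tilde A$ with $\Tr(q)<\infty$ for the canonical trace $\Tr$ on $C_\psi(M)$, the algebra $\mathcal A:=\tilde Aq$ is amenable --- in fact abelian --- inside $qC_\psi(M)q$, so Theorem~\ref{thmA} gives two cases.

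In case~$(\mathrm{ii})$, $\mathcal N_{qC_\psi(M)q}(\mathcal A)''$ is amenable relative to $C_{\psi|_B}(B)$; since $\mathcal A$ is Cartan in $qC_\psi(N\ovt B)q$, this normalizer algebra contains $qC_\psi(N\ovt B)q$, which is therefore amenable relative to $C_{\psi|_B}(B)$. As $N\ovt B$ is a type $\mathrm{III}_1$ factor, its core is a type $\mathrm{II}_\infty$ factor, so $q$ has central support $1$ and $C_\psi(N\ovt B)$ itself is amenable relative to $C_{\psi|_B}(B)$; unwinding the $\R$-crossed products, which preserve relative amenability, this forces $N\ovt B$ to be amenable relative to $B$ inside $M$, hence $N$ to be amenable --- contradicting the hypothesis. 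In case~$(\mathrm{i})$, $\mathcal A\preceq_{C_\psi(M)}C_{\psi|_B}(B)$; by the correspondence between intertwining in a core and in the underlying algebra, combined with the conditional expectation $M\to N\ovt B$, this descends to $A\preceq_{N\ovt B}B$. Popa's intertwining-by-bimodules (\cite{Po01,Po03}) then supplies, for some $n$, a corner $D$ of $M_n(B)$ together with a Cartan subalgebra $\mathcal C$ of $N\ovt D$ contained in the component $1\ovt D$; but $D$ is a factor, so $N\otimes 1=(1\ovt D)'\cap(N\ovt D)\subseteq\mathcal C'\cap(N\ovt D)=\mathcal C$, whence $N$ is abelian, contradicting non-amenability. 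Either way we obtain a contradiction, proving Corollary~\ref{corB}.

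The real content is already carried by Theorem~\ref{thmA}; what remains is core-level bookkeeping. The two delicate points are (a) that $\tilde A=C_\psi(A)$ genuinely is a Cartan subalgebra of $C_\psi(N\ovt B)$, and that intertwining and relative amenability transfer correctly among $C_\psi(M)$, $C_\psi(N\ovt B)$, $C_{\psi|_B}(B)$ and the underlying non-core algebras; and (b) extracting the contradiction in case~$(\mathrm{i})$ from $A\preceq_{N\ovt B}B$. I expect (b) to be the main obstacle: the lemmas behind (a) are by now routine, whereas (b) must genuinely use the position of $B$ as a tensor factor of $N\ovt B$ (equivalently $B'\cap(N\ovt B)=N$) and the fact that $N$ is a non-trivial factor.
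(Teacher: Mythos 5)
Your overall strategy is the paper's: reduce to $B$ of type $\mathrm{III}_1$ by tensoring with $R_\infty$ and a Cartan subalgebra of it, pass to the continuous core where $A\ovt L\R$ is again Cartan, apply Theorem \ref{thmA} to a $\Tr$-finite corner, and extract a contradiction. The reduction step and the use of [HR10, Proposition 2.6] match the paper exactly. However, your handling of case (i) has a genuine gap. You claim that $A\preceq_{N\ovt B}B$ "supplies, for some $n$, a corner $D$ of $M_n(B)$ together with a Cartan subalgebra $\mathcal{C}$ of $N\ovt D$ contained in the component $1\ovt D$." That is not the conclusion of Popa's intertwining: what you get is a normal unital $\ast$-homomorphism $\theta\colon eAe\to fBf$ and a partial isometry $v$ with $av=v\theta(a)$. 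The image $\theta(eAe)$ is just some abelian subalgebra of a corner of $B$, and conjugating $A$ by $v$ lands you in $\theta(eAe)v^*v$ where $v^*v$ need not commute with $N\otimes 1$; nothing forces a Cartan subalgebra of $N\ovt D$ to sit inside $1\ovt D$, so the chain $N\otimes 1\subseteq\mathcal{C}'\cap(N\ovt D)=\mathcal{C}$ is unjustified. The paper excludes case (i) differently and \emph{before} invoking Theorem \ref{thmA}: it shows $A\not\preceq_{L^\infty(\G)\ovt B}B$ outright by combining Lemma \ref{intertwining lemma} (to reduce to intertwining inside $N\ovt B$) with the commutant intertwining lemma [HI15, Lemma 4.9], which turns $A\preceq_{N\ovt B}B$ into $N=B'\cap(N\ovt B)\preceq_{N\ovt B}A'\cap(N\ovt B)=A$; a unital normal embedding of a corner of the non-amenable factor $N$ into the abelian $A$ is absurd. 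This is the precise way "$B'\cap(N\ovt B)=N$" enters — you correctly identified it as the crux, but the mechanism you wrote does not deliver it. Having excluded $A\preceq_M B$, [BHR12, Proposition 2.10] rules out alternative (i) at the core level, so only case (ii) can occur.

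A secondary issue: in case (ii) you assert that "unwinding the $\R$-crossed products, which preserve relative amenability," converts relative amenability of $C_\psi(N\ovt B)$ over $C_{\psi|_B}(B)$ into relative amenability of $N\ovt B$ over $B$ in $M$. Descending relative amenability from the core to the underlying algebra is not a formality and you give no argument for it. The paper avoids this step entirely: from the conditional expectation $\langle C_\varphi(M),C_{\varphi_B}(B)\rangle\to C(N\ovt B)$ it restricts to the copy $\B(L^2(L^\infty(\G)))\otimes\C 1\otimes\C 1$ sitting inside the basic construction and composes with an expectation of the core of $N\ovt B$ onto $N$, producing a conditional expectation $\B(L^2(L^\infty(\G)))\to N$ and hence amenability of $N$ directly, without ever leaving the core.
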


	For the proof of Theorem \ref{thmA},	we will establish a weakly compact action on the continuous core of $L^\infty(\G)\ovt B$ as the one \textit{relative to $B$}. 
The central weak amenability of $\widehat{\G}$ is used to find approximation maps on the continuous core, which are relative to $B\rtimes \R$. Then combined with the amenability of $\R$, we construct appropriate approximation maps on the core relative to $B$. In this process, since $B$ is not with expectation in the core, we use operator valued weighs instead. This is our strategy for the first step.

	For the second step, although we go along a very similar line to \cite{Is13}, we need a rather different (and general) approach to the proof. We note that this is why we assume only bi-exactness of $\widehat{\G}$, and do not need the notion of \textit{condition (AOC)$^+$} as in \cite{Is12,Is13}.

\bigskip

	This paper organizes as follows. 
In Section \ref{Preliminaries}, we recall fundamental facts for our paper, such as Tomita--Takesaki theory, free quantum groups, bi-exactness, weak amenability, and Popa's intertwining techniques.

	In Section \ref{Weakly compact actions}, we study a generalization of the relative weakly compact action on the continuous core, by constructing appropriate approximation maps on the core. The main tools for this construction are: operator valued weights; central weak amenability; and weak containment, together with the amenability of $\R$. This is the most technical part of this paper.

	In Section \ref{Proof of Theorem A free quantum group factors}, we prove the main theorem. We follow the proof of \cite{PV12,Is13}, using the weakly compact action given in Section \ref{Weakly compact actions}.

\bigskip

\noindent 
{\bf Acknowledgement.} The author would like to thank Yuki Arano, Kei Hasegawa, and Reiji Tomatsu  for useful comments on the relative amenability, and Narutaka Ozawa and Stefaan Vaes for fruitful conversations. 
This research was carried out while he was visiting the University of California Los Angeles. He gratefully acknowledges the kind hospitality of them. 
He was supported by JSPS, Research Fellow of the Japan Society for the Promotion of Science.


\section{\bf Preliminaries}\label{Preliminaries}

\subsection{\bf Tomita--Takesaki theory and operator valued weights}\label{Tomita Takesaki theory and operator valued weights}

	We first recall some notions in Tomita--Takesaki theory. We refer the reader to \cite{Ta01} for this theory, and to \cite{Ha77a,Ha77b} and \cite[Chapter IX.$\S$4]{Ta01} for operator valued weights.

	Let $M$ be a von Neumann algebra and $\varphi$ a faithful normal semifinite weight on $M$. Put $\mathfrak{n}_\varphi:=\{ x\in M\mid \varphi(x^*x)<\infty\}$ and denote by $\Lambda_\varphi \colon \mathfrak{n}_\varphi \to L^2(M,\varphi)$ the canonical embedding. We denote the \textit{modular operator, modular conjugation}, and \textit{modular action} for $M \subset \B(L^2(M,\varphi))$ by $\Delta_\varphi$, $J_\varphi$ and $\sigma^{\varphi}$ respectively. 
The Hilbert space $L^2(M,\varphi)$ with $J_\varphi$ and with its \textit{positive cone $\mathcal{P}_\varphi$} is called the \textit{standard representation for $M$} \cite[Chapter IX.$\S$1]{Ta01}, which does not depends on the choice of $\varphi$. Any state on $M$ is represented by a vector state, from which the vector is uniquely chosen from $\mathcal{P}_\varphi$. Any element $\alpha\in \Aut(M)$ is written as $\alpha=\Ad u $ by a unique $u \in \B(L^2(M,\varphi))$ which preserves the standard representation structure. 
The crossed product $M \rtimes_{\sigma^\varphi} \R$ by the modular action is called the \textit{continuous core} \cite[Chapter XII.$\S$1]{Ta01} and is written as $C_\varphi(M)$, which is equipped with the dual weight $\widehat{\varphi}$ and the canonical trace $\Tr_\varphi:=\widehat{\varphi}(h_\varphi^{-1} \, \cdot \,)$, where $h_\varphi$ is a self-adjoint positive closed operator affiliated with $L\R$. 
For any other faithful normal semifinite weigh $\psi$, there is a family of unitaries  $([D\varphi, D\psi]_t)_{t\in \R}$ in $M$ called the \textit{Connes cocycle}  \cite[Definition VIII.3.4]{Ta01}. This gives a cocycle conjugate for modular actions of $\varphi$ and $\psi$, and hence there is a $\ast$-isomorphism 
	$$\Pi_{\psi,\varphi}\colon C_{\varphi}(M)\to C_{\psi}(M), \quad \Pi_{\psi,\varphi}(x)=x \quad (x\in M), \quad \Pi_{\psi,\varphi}(\lambda_t^\varphi)=[D\psi, D\varphi]_t^*\lambda_t^\psi  \quad  (t\in \R).$$
It holds that $\Pi_{\psi,\varphi}\circ \Pi_{\varphi, \omega}=\Pi_{\psi,\omega}$ for any other $\omega$ on $M$, and $\Pi_{\psi\circ E_M ,\varphi\circ E_M} |_{C_{\varphi}(M)} = \Pi_{\psi,\varphi}$ for any $M\subset N$ with expectation $E_M$. 
It preserves traces $\Tr_{\psi}\circ \Pi_{\psi,\varphi} = \Tr_{\varphi}$ \cite[Theorem XII.6.10(iv)]{Ta01}. So the pair $(C_\varphi(M),\Tr_\varphi)$ does not depend on the choice of $\varphi$, and we call $\Tr_\varphi$ the canonical trace. 
A von Neumann algebra is said to be a \textit{type III$_1$ factor} if its continuous core is a II$_\infty$ factor.

	Let $B\subset M$ be any inclusion of von Neumann algebras. We write as $\widehat{M}^+$ the \textit{extended positive cone} of $M$. For any \textit{operator valued weight} $T \colon \widehat{M}^+\rightarrow \widehat{B}^+$, we use the notation 
\begin{align*}
	\mathfrak{n}_{T}&:= \left\{x\in M\mid \| T(x^*x)\|_\infty<+\infty \right\},\\
	\mathfrak{m}_{T}&:=(\mathfrak{n}_{T})^*\mathfrak{n}_{T}= \left\{\sum_{i=1}^nx_i^*y_i \mid n \geq 1, x_i, y_i\in \mathfrak{n}_{T} \text{ for all } 1 \leq i \leq n \right\}.
\end{align*}
Then $T$ has a unique extension $T\colon \mathfrak{m}_{T}\rightarrow B$ as  a $B$-bimodule linear map. In this paper, all the operator valued weights that we consider are assumed to be \textit{faithful, normal} and \textit{semifinite}. Note that since the operator valued weight is nothing but a weight when $B=\C$, we may also extend a faithful normal semifinite weight $\varphi$ on $\mathfrak{m}_\varphi$.

	For any inclusion $B\subset M$ of von Neumann algebras with faithful normal weights $\varphi_B$ and $\varphi_M$ on $B$ and $M$ respectively, the modular actions of them satisfy $\sigma^{\varphi_M}|_B=\sigma^{\varphi_B}$ if and only if there is an operator valued weight $E_B$ from $M$ to $B$ which satisfies $\varphi_B\circ E_B=\varphi_M$, and $E_B$ determines uniquely by this equality \cite[Theorem IX.4.18]{Ta01}. We call $E_B$ the \textit{operator valued weight from $(M,\varphi_M)$ to $(B,\varphi_B)$}.  
In this case, the cores has an inclusion $C_{\varphi_B}(B) \subset C_{\varphi_M}(M)$ since $\sigma^{\varphi_M} |_B=\sigma^{\varphi_B}$.
When $\varphi_M|_B=\varphi_B$, $E_B$ is a faithful normal conditional expectation \cite[Theorem IX.4.2]{Ta01}.

	Let $M$ be a von Neumann algebra and $\varphi$ a faithful normal semifinite weight on $M$. Put $L^2(M):=L^2(M,\varphi)$ and let $\alpha$ be an action of $\R$ on $M$. In this article, as a representation of $M\rtimes_\alpha \R$, we use that for any $\xi \in L^2(\R)\otimes L^2(M)\simeq L^2(\R ,M)$ and $s,t \in \R$,
\begin{align*}
	& M \ni x \mapsto \pi_{\alpha}(x) ; \ (\pi_{\alpha}(x)\xi)(s):=\alpha_{-s}(x) \xi (s);\\
	& L\R \ni \lambda_t \mapsto 1_M \otimes \lambda_t ; \ ((1\otimes \lambda_t)\xi)(s) := \xi(s-t).
\end{align*}
Let $C_c(\R , M)$ be the set of all $\ast$-strongly continuous functions from $\R$ to $M$ with compact supports. Then there is an embedding
	$$\widehat{\pi}_\alpha\colon C_c(\R , M) \ni f \mapsto  \int_\R (1\otimes \lambda_t)\pi_{\alpha}(f(t)) dt \in M\rtimes_\alpha \R, $$
where the integral here should be understood as the map $T\in \B(L^2(\R)\otimes L^2(M))$ given by $\langle T\xi , \eta\rangle = \int_\R \langle (1\otimes \lambda_t) \pi_\alpha(f(t)) \xi, \eta \rangle  dt$ for all $\xi,\eta \in L^2(\R)\otimes L^2( M)$. We note that by $(f*g)(t):=\int_\R\alpha_s(f(t+s))g(-s) ds$ and $f^\sharp(t):=\alpha_{t}^{-1}(f(-t)^*)$ for $f,g\in C_c(\R, M)$ and $t\in \R$, $C_c(\R, M)$ is a $\ast$-algebra, so that $\widehat{\pi}_\alpha$ is a $\ast$-homomorphism. 
For $f\in C_c(\R,M)$ and $x\in M$, we denote by $(f\cdot x)(t):=f(t)x$ for $t\in G$. Let $C_c(\R ,M) \mathfrak{n}_\varphi \subset C_c(\R ,M) $ be the set of linear spans of $f\cdot x$ for $f\in C_c(\R,M)$ and $x\in \mathfrak{n}_\varphi$. 
With these notation, the dual weight satisfies 
	$$\widehat{\varphi}( \widehat{\pi}_\alpha(g)^* \widehat{\pi}_\alpha(f)) = \varphi((g^\sharp \ast f) (0)) = \int_\R \varphi(g(t)^* f(t)) dt \quad \text{ for any } f,g \in C_c(\R ,M) \mathfrak{n}_\varphi$$ 
\cite[Theorem X.1.17]{Ta01}. The modular objects of $\widehat{\varphi}$ are given by 
\begin{align*}
	&\sigma_t^{\widehat{\varphi}}|_M=\sigma_t^\varphi \quad \text{and} \quad \sigma_t^{\widehat{\varphi}}(\lambda_s)=\lambda_s [D(\varphi\circ \alpha_s), D\varphi]_t , \quad \text{for } s,t \in \R;\\
	& (J_{\widehat{\varphi}}\xi ) (t) = u^*(t) J_\varphi\xi(-t), \quad \text{for } t \in \R \text{ and } \xi \in L^2(\R , L^2(M)),
\end{align*}
where $u(t)$ is the unitary such that $\alpha_t = \Ad u(t)$ which preserves the standard structure of $L^2(M,\varphi)$. 
In particular $\sigma^{\widehat{\varphi}}$ globally preserves $M$ and so there is a canonical operator valued weight $E_M$ from $(M\rtimes_{\alpha} \R, \widehat{\varphi})$ to $(M,\varphi)$. By the equality $\varphi\circ E_M = \widehat{\varphi}$, it holds that for any $f ,g\in C_c(\R ,M)$,
	$$E_M( \widehat{\pi}_\alpha(g)^* \widehat{\pi}_\alpha(f)) = (g^\sharp \ast f) (0) = \int_\R g(t)^* f(t) dt .$$ 

Here we prove a few lemmas.

\begin{Lem}\label{operator valued weight}
	Let $(N,\varphi_N)$ and $(B,\varphi_B)$ be von Neumann algebras with faithful normal semifinite weights with $\varphi_N(1)=1$. Let $\alpha^B$ be an action of $\R$ on $B$, and put $M:=N\ovt B$, $\varphi:=\varphi_N\otimes \varphi_B$, $\alpha:=\sigma^{\varphi_N}\otimes \alpha^B$. Let $E_M$, $E_B$, $E_{B\rtimes \R}$ be the canonical operator valued weights from $(M\rtimes_\alpha \R, \widehat{\varphi})$ to $(M,\varphi)$, from $(M\rtimes_\alpha\R, \widehat{\varphi})$ to $(B,\varphi_B)$, and from $(M\rtimes_\alpha \R, \widehat{\varphi})$ to $(B\rtimes_{\alpha^B}\R, \widehat{\varphi}_B)$ respectively. 
Then we have $E_{B\rtimes \R} \circ E_M = E_B$.
\end{Lem}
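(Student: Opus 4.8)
The plan is to reduce everything to the uniqueness of the canonical operator valued weight (\cite[Theorem IX.4.18]{Ta01}) together with the composition theorem for operator valued weights \cite{Ha77b}, the only genuine content being the identity $E_{B\rtimes\R}|_M=\varphi_N\otimes\id_B$, i.e.\ that the canonical operator valued weight onto $B\rtimes_{\alpha^B}\R$ restricts on $M$ to the slice conditional expectation.

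First I would set up the maps and their existence. Since $\varphi_N(1)=1$, the slice map $E^0_B:=\varphi_N\otimes\id_B\colon M\to B$ is the $\varphi$-preserving normal conditional expectation (it exists because $\varphi|_B=\varphi_B$), with $\varphi_B\circ E^0_B=\varphi$. Because $\alpha$ restricts to $\alpha^B$ on $B=\C\ovt B$, we have $B\rtimes_{\alpha^B}\R\subseteq M\rtimes_\alpha\R$, and inserting elements of $C_c(\R,B)\mathfrak n_{\varphi_B}$ into the formula for $\widehat\varphi$ recalled above, together with $\varphi|_B=\varphi_B$, shows that $\widehat\varphi$ restricts on $B\rtimes_{\alpha^B}\R$ to $\widehat\varphi_B$. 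Hence $\sigma^{\widehat\varphi}$ restricts to $\sigma^{\widehat\varphi_B}$ on $B\rtimes_{\alpha^B}\R$ and to $\sigma^{\varphi_B}$ on $B$, so $E_{B\rtimes\R}$ and $E_B$ exist, and moreover $E_{B\rtimes\R}$ is a faithful normal conditional expectation by \cite[Theorem IX.4.2]{Ta01}. Applying the composition theorem for operator valued weights to the tower $B\subseteq M\subseteq M\rtimes_\alpha\R$, the map $E^0_B\circ E_M$ is a faithful normal semifinite operator valued weight from $M\rtimes_\alpha\R$ to $B$ with $\varphi_B\circ(E^0_B\circ E_M)=\varphi\circ E_M=\widehat\varphi$; by uniqueness, $E^0_B\circ E_M=E_B$. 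Thus once I know $E_{B\rtimes\R}|_M=E^0_B$, I get $E_{B\rtimes\R}\circ E_M=E^0_B\circ E_M=E_B$ (the composition is legitimate since $E_M$ takes values in $M$, where $E_{B\rtimes\R}$ agrees with $E^0_B$; extend to the extended positive cone by normality).

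It remains to prove $E_{B\rtimes\R}|_M=\varphi_N\otimes\id_B$, which is the main obstacle. Fix $a\in N$. For $F,G\in C_c(\R,B)\mathfrak n_{\varphi_B}$ put $\xi=\Lambda_{\widehat\varphi_B}(\widehat\pi_{\alpha^B}(F))$ and $\eta=\Lambda_{\widehat\varphi_B}(\widehat\pi_{\alpha^B}(G))$; such vectors are total in $L^2(B\rtimes_{\alpha^B}\R,\widehat\varphi_B)\subseteq L^2(M\rtimes_\alpha\R,\widehat\varphi)$. Using $\pi_\alpha(a\otimes1)\widehat\pi_\alpha(\iota F)=\widehat\pi_\alpha(f')$ with $f'(t)=\sigma^{\varphi_N}_{-t}(a)\otimes F(t)$ (where $(\iota F)(t)=1_N\otimes F(t)$), the formula for $\widehat\varphi$ on $C_c(\R,M)\mathfrak n_\varphi$, the identity $\varphi(x\otimes c)=\varphi_N(x)\varphi_B(c)$, and the modular invariance $\varphi_N\circ\sigma^{\varphi_N}_t=\varphi_N$, one computes
\[
\langle\pi_\alpha(a\otimes1)\xi,\eta\rangle=\int_\R\varphi_N\!\big(\sigma^{\varphi_N}_{-t}(a)\big)\,\varphi_B(G(t)^*F(t))\,dt=\varphi_N(a)\int_\R\varphi_B(G(t)^*F(t))\,dt=\varphi_N(a)\,\langle\xi,\eta\rangle .
\]
Since $E_{B\rtimes\R}$ is the $\widehat\varphi$-preserving conditional expectation, it implements the orthogonal projection onto $L^2(B\rtimes_{\alpha^B}\R,\widehat\varphi_B)$, so $\langle E_{B\rtimes\R}(\pi_\alpha(a\otimes1))\xi,\eta\rangle=\langle\pi_\alpha(a\otimes1)\xi,\eta\rangle=\varphi_N(a)\langle\xi,\eta\rangle$ for all such $\xi,\eta$; hence $E_{B\rtimes\R}(\pi_\alpha(a\otimes1))=\varphi_N(a)1$. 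By $(B\rtimes_{\alpha^B}\R)$-bimodularity this gives $E_{B\rtimes\R}(\pi_\alpha(a\otimes b))=\varphi_N(a)b$ for $b\in B$, and normality upgrades it to $E_{B\rtimes\R}|_M=\varphi_N\otimes\id_B=E^0_B$, finishing the proof. The delicate point is precisely this compatibility of the canonical operator valued weight onto $B\rtimes_{\alpha^B}\R$ with the tensor splitting of $M$; the normalization $\varphi_N(1)=1$ enters essentially (without it $\sigma^{\widehat\varphi}$ need not globalize $B$, so none of these expectations would exist), whereas the existence statements and the use of the composition theorem are routine given the preliminaries.
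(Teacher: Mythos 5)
Your proof is correct and follows essentially the same route as the paper: both arguments reduce to showing $E_{B\rtimes\R}|_M=\varphi_N\otimes\id_B$ via the dual weight formula on $C_c(\R,\cdot)\mathfrak{n}$-elements, and then conclude $E_B^M\circ E_M=E_B$ from $\varphi_B\circ E_B^M\circ E_M=\widehat{\varphi}$ and uniqueness of the canonical operator valued weight. The only cosmetic difference is that the paper realizes $E_{B\rtimes\R}$ as the compression by $P_N\otimes1_B\otimes1_{L^2(\R)}$ and matches the two maps on a dense subalgebra of the crossed product, whereas you compute $E_{B\rtimes\R}$ on $M$ directly through the Jones projection onto $L^2(B\rtimes_{\alpha^B}\R,\widehat{\varphi}_B)$.
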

\begin{proof}
	Let $P_N$ be the one dimensional projection from $L^2(N,\varphi_N)$ onto $\C \Lambda_{\varphi_N}(1_N)$ and observe that the compression map by $P_N \otimes 1_B \otimes 1_{L^2(\R)}$ on $N \ovt B \ovt \B(L^2(\R))$ gives a normal conditional expectation $E\colon M\rtimes_\alpha\R \to B\rtimes_{\alpha^B}\R$ satisfying $E((x \otimes b)\lambda_t) = \varphi_N(x)b\lambda_t$ for $x\in N$, $b\in B$, and $t\in \R$. It is faithful since so is on $N \ovt B \ovt \B(L^2(\R))$. A simple computation shows that $E=E_{B\rtimes \R}$ and $E_{B\rtimes \R}((x \otimes b)\lambda_t) = \varphi_N(x)b\lambda_t$ for $x\in N$, $b\in B$, and $t\in \R$. In particular $E_{B\rtimes \R}|_M$ is the canonical conditional expectation $E_B^M$ from $(M,\varphi)$ to $(B,\varphi_B)$. 
Then by definition, $\varphi_B\circ E_{B}^M \circ E_M= \varphi \circ E_M=\widehat{\varphi}$, and hence $E_{B}^M \circ E_M = E_B$. Since $E_{B}^M \circ E_M= E_{B\rtimes \R}\circ E_M$, we obtain the conclusion.
\end{proof}

	We next recall the following well known fact. We include a proof for reader's convenience.

\begin{Lem}\label{center of core of III1}
	Let $M$ be a type $\rm III_1$ factor and $N$ a von Neumann algebra. Then the center of the continuous core of $M\ovt N$ coincides with the center of $N$.
\end{Lem}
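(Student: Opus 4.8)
Fix faithful normal semifinite weights $\varphi_M$ on $M$ and $\varphi_N$ on $N$ and put $\varphi:=\varphi_M\otimes\varphi_N$; since the core does not depend on the weight it suffices to compute the center of $C_\varphi(M\ovt N)=(M\ovt N)\rtimes_{\sigma^\varphi}\R$. The plan is to realize this core as a fixed point algebra inside the larger core $\mathcal C:=C_{\varphi_M}(M)\ovt C_{\varphi_N}(N)$ and then to pin down its center using the type $\mathrm{III}_1$ hypothesis. First I would set up the ambient algebra: using the identification $(A\rtimes_\alpha\R)\ovt(B\rtimes_\beta\R)=(A\ovt B)\rtimes_{\alpha\times\beta}\R^2$, write $\mathcal C=(M\ovt N)\rtimes_{\sigma^{\varphi_M}\times\sigma^{\varphi_N}}\R^2$, under which $M\ovt N$ is embedded as itself and $\lambda^{\varphi_M}_t\otimes\lambda^{\varphi_N}_u$ corresponds to $\lambda_{(t,u)}$. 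Since $\sigma^\varphi_t=\sigma^{\varphi_M}_t\otimes\sigma^{\varphi_N}_t$, the core $C_\varphi(M\ovt N)$ is the crossed product by the diagonal $\Delta\R\subset\R^2$, so $C_\varphi(M\ovt N)\subset\mathcal C$ is the von Neumann algebra generated by $M\ovt N$ and the unitaries $\lambda^{\varphi_M}_t\otimes\lambda^{\varphi_N}_t$. Writing $\theta^M,\theta^N$ for the dual actions of $\R$ on $C_{\varphi_M}(M),C_{\varphi_N}(N)$ and putting $\beta_s:=\theta^M_s\otimes\theta^N_{-s}$, this $\beta$ is exactly the dual action of $\R^2=\widehat{\R^2}$ restricted to the anti-diagonal $\{(s,-s)\}$, whose annihilator in $\R^2$ is $\Delta\R$; hence Takesaki's description of fixed point algebras of (restricted) dual actions gives $\mathcal C^{\beta}=(M\ovt N)\rtimes_{\sigma^\varphi}\Delta\R=C_\varphi(M\ovt N)$.

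Next I would prove the two inclusions. For $Z(N)\subset Z(C_\varphi(M\ovt N))$: since $M$ is a factor, $Z(M\ovt N)=Z(N)$, and since the center lies in the centralizer of $\varphi_N$ it is fixed by $\sigma^{\varphi_N}$, so any $z\in Z(N)$ (viewed as $1\otimes z$) commutes with $M\ovt N$ and with every $\lambda^\varphi_t$, whence $z\in Z(C_\varphi(M\ovt N))$. For the reverse inclusion, take $z\in Z(C_\varphi(M\ovt N))\subset\mathcal C$. As $z$ commutes with $M\otimes 1$ and $M\subset C_{\varphi_M}(M)$, the commutation theorem for tensor products yields $z\in(M\otimes 1)'\cap\mathcal C=(M'\cap C_{\varphi_M}(M))\ovt C_{\varphi_N}(N)$. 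Here I invoke that for the type $\mathrm{III}_1$ factor $M$ the inclusion $M\subset C_{\varphi_M}(M)$ is irreducible, i.e. $M'\cap C_{\varphi_M}(M)=\C$; thus $z=1\otimes c$ with $c\in C_{\varphi_N}(N)$. Since also $z\in\mathcal C^{\beta}$, applying $\beta_s$ forces $\theta^N_{-s}(c)=c$ for all $s$, and as the fixed point algebra of the dual action $\theta^N$ on $C_{\varphi_N}(N)$ is $N$, we get $c\in N$, hence $z\in 1\otimes N$. Finally $z$ commutes with $1\otimes N$, so $c\in Z(N)$, which completes the proof.

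The routine ingredients — the commutation identities for tensor products, the computation of fixed point algebras of (restricted) dual actions, and the containment of the center in the centralizer of a weight — I would simply cite from \cite{Ta01}. The one essential input, and the place where the type $\mathrm{III}_1$ assumption is used irreplaceably, is the irreducibility $M'\cap C_{\varphi_M}(M)=\C$: this is standard (it follows from the fact that the Connes spectrum of the modular flow of a type $\mathrm{III}_1$ factor is all of $\R$), and I should be careful to cite exactly that statement, since merely knowing that $C_{\varphi_M}(M)$ is a factor would not suffice — the inclusion $M\subset C_{\varphi_M}(M)$ could otherwise have a large relative commutant. As a sanity check and an alternative in a special case: if $N$ is semifinite one may take $\varphi_N$ a trace, so that $\sigma^\varphi=\sigma^{\varphi_M}\otimes\id$ and $C_\varphi(M\ovt N)\cong C_{\varphi_M}(M)\ovt N$, whence the conclusion follows using only that $C_{\varphi_M}(M)$ is a factor; but extending this to arbitrary $N$ without a direct integral decomposition — which the generality adopted in this paper does not permit — seems to require the irreducibility statement anyway, so I would keep the fixed-point-algebra argument as the main line.
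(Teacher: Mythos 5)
Your proof is correct, but it takes a genuinely different route from the paper's. The paper picks a weight $\varphi_M$ on $M$ with $(M_{\varphi_M})'\cap M=\C$ (citing \cite[Theorem XII.1.7]{Ta01}) and then runs a chain of relative-commutant computations inside $M\ovt N\ovt\B(L^2(\R))$: first cutting down by the centralizer $M_{\varphi_M}$, then by $L\R$, then by $M$, using only factoriality of $C_{\varphi_M}(M)$ at the last step. You instead embed $C_\varphi(M\ovt N)$ into $C_{\varphi_M}(M)\ovt C_{\varphi_N}(N)$ as the fixed-point algebra of the anti-diagonal dual action and reduce everything to the single statement $M'\cap C_{\varphi_M}(M)=\C$. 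Your version is more conceptual and modular, but it imports two external ingredients the paper never needs: the identification $(A\rtimes_\alpha G)^{\hat\alpha|_{H^\perp}}=A\rtimes_\alpha H$ for closed subgroups, and the irreducibility of $M$ in its core. On the latter, be aware that the irreducibility is not really a shortcut past the paper's argument — it is proved by exactly the same two inputs the paper uses, and if you cannot locate a precise reference you can derive it in three lines: with $\varphi_M$ as in \cite[Theorem XII.1.7]{Ta01}, one has $M'\cap C_{\varphi_M}(M)\subset (M_{\varphi_M})'\cap\bigl(M\ovt\B(L^2(\R))\bigr)\cap C_{\varphi_M}(M)=\bigl(\C\ovt\B(L^2(\R))\bigr)\cap C_{\varphi_M}(M)=\C\ovt L\R$, and anything in $L\R$ commuting with $M$ lies in $\mathcal Z(C_{\varphi_M}(M))=\C$. (This derivation is shorter and more self-contained than invoking the Connes spectrum of the modular flow, which is the right idea but harder to cite in exactly the form you need.) With that point nailed down, and the forward inclusion $\mathcal Z(N)\subset N_{\varphi_N}$ noted as you do, the argument is complete; your remark that factoriality of the core alone would not suffice is exactly the right caution.
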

\begin{proof}
	Since $M$ is a type III$_1$ factor, there is a faithful normal semifinite weight $\varphi_M$ on $M$ such that $(M_{\varphi_M})'\cap M=\C$ \cite[Theorem XII.1.7]{Ta01}, where $M_{\varphi_M}$ is the fixed point algebra of the modular action of $\varphi_M$. Let $\varphi_N$ be a faithful normal semifinite weight on $N$ and put $\varphi:=\varphi_M \otimes \varphi_N$. Observe that the center of $C_\varphi(M\ovt N)$ is contained in 
	$$(M_{\varphi_M} \otimes \C1_{L^2(N)\otimes L^2(\R)})' \cap M\ovt N \ovt \B(L^2(\R)) = \C 1_{L^2(M,\varphi_M)}\ovt N\ovt \B (L^2(\R)).$$
On the other hand, since $\mathcal{Z}(C_\varphi(M\ovt N))$ commutes with $L\R$, it is contained in $ (M\ovt N)_{\varphi} \ovt L\R $ (e.g.\ \cite[Proposition 2.4]{HR10}). Hence 
	$$\mathcal{Z}(C_\varphi(M\ovt N)) \subset \C\ovt N\ovt \B (L^2(\R)) \cap (M\ovt N)_{\varphi} \ovt L\R = \C  \ovt N_{\varphi_N} \ovt L\R.$$
Finally since $\mathcal{Z}(C_\varphi(M\ovt N))$ commutes with $M$, and $N_{\varphi_N}$ commutes with $M$ and $L\R$, (up to exchanging positions of $M$ and $N$,) we have 
	$$\mathcal{Z}(C_\varphi(N\ovt M)) \subset M' \cap   N_{\varphi_N} \ovt \C\ovt L\R =    N_{\varphi_N} \ovt (M'\cap \C\ovt L\R ) = N_{\varphi_N} \ovt \C1,$$
where we used $M'\cap \C\ovt L\R \subset \mathcal{Z}(C_{\varphi_M}(M)) = \C$. Since $N'\cap N_{\varphi_N} = \mathcal{Z}(N)$, we conclude that $\mathcal{Z}(C_\varphi(M\ovt N))=\mathcal{Z}(N)$. Since all continuous cores are isomorphic with each other preserving the position of $M\ovt N$, for any other faithful normal semifinite weight $\psi$, we obtain $\mathcal{Z}(C_\psi(M\ovt N))=\mathcal{Z}(N)$.
\end{proof}

\subsection{\bf Relative tensor products, basic constructions and weak containments}\label{Relative tensor products, basic constructions and weak containments}

	Let $M$ and $N$ be von Neumann algebras and $H$ a Hilbert space. 
Throughout this paper, we denote \textit{opposite} objects with circle (e.g.\  $N^\op:=N^{\rm op}$, $x^\op:=x^{\rm op}\in N^\op$, $(xy)^\op=y^\op x^\op$ for $x,y\in N$). 
We say that $H$ is a \textit{left $M$-module} (resp.\ a \textit{right $N$-module}) if there is a normal unital injective $\ast$-homomorphism $\pi_H \colon M \to \B(H)$ (resp.\ $\theta_H \colon N^\op  \to \B(H)$). We say $H$ is an \textit{$M$-$N$-bimodule} if $H$ is a left $M$-module and a right $N$-module with commuting ranges. 
The \textit{standard bimodule} of $M$ is a standard representation $L^2(M)$ as an $M$-bimodule, where the right action is given by $M^\op  \ni x^\op  \mapsto Jx^*J \in M' \subset \B(L^2(M))$.

	Let $N$ be a von Neumann algebra, $\varphi$ a faithful normal semifinite weight, and $H=H_N$ a right $N$-module with the right action $\theta$. A vector $\xi\in H$ is said to be \textit{left $\varphi$-bounded} if there is a constant $C>0$ such that $\|\theta(x^\op )\xi\| \leq C \|J_\varphi\Lambda_\varphi(x^*) \|$ for all $x\in \mathfrak{n}_\varphi^*$. 
We denote by $D(H, \varphi)$ all left $\varphi$-bounded vectors in $H$. It is known that the subspace $D(H, \varphi)\subset H$ is always dense \cite[Lemma IX.3.3(iii)]{Ta01}. For $\xi\in D(H,\varphi)$, define a bounded operator
$$L_\xi\colon L^2(N,\varphi)\to H; \ L_\xi J_\varphi\Lambda_\varphi(a^*)= \theta(a^\op ) \xi.$$ 
It is easy to verify that
\begin{itemize}
	\item $\theta(x^\op )L_\xi=L_\xi J_\varphi x^*J_\varphi$ \quad $(x\in N)$;
	\item $L_\xi L_\eta^*\in \theta(N^\op )'$ and $L_\eta^* L_\xi \in (J_\varphi NJ_\varphi )'=N$ \quad $(\xi,\eta\in D(H,\varphi))$;
	\item $xL_\xi y= L_{x \theta(\sigma_{i/2}^\varphi(y)^\op ) \xi }$ \quad $(x\in \theta(N^\op )', y\in N_a)$,
\end{itemize}
where $N_a\subset N$ is the subalgebra consisting of all \textit{analytic} elements with respect to $(\sigma_t^\varphi)$ (see \cite[Lemma IX.3.3(v)]{Ta01} for the third statement).
For a left $N$-module $K={}_NK$, the \textit{relative tensor product} $H\otimes_N K$ is defined as the Hilbert space obtained by separation and compression of $D(H,\varphi) \otimes_{\rm alg}K$ with an inner product $\langle \xi_1\otimes_N \eta_1, \xi_2\otimes_N \eta_2 \rangle:= \langle L_{\xi_2}^*L_{\xi_1} \eta_1, \eta_2 \rangle_K$. When $H={}_MH_N$ is an $M$-$N$-bimodule and $K={}_N K_A$ is an $N$-$A$-bimodule for von Neumann algebras $M$ and $A$, the Hilbert space $H\otimes_N K$ is an $M$-$A$-bimodule by $\pi(x) \theta(a^\op ) (\xi \otimes_N \eta)  := (\pi_H(x)\xi)\otimes_B (\theta_K(a^\op )\eta)$ for $x\in M$, $a\in A$, $\xi\in D(H, \varphi)$ and $\eta\in K$. 

	Since any standard representation $L^2(M)$ of $M$ is isomorphic with each other as $M$-bimodules, when we consider $H=K=L^2(M)$ and $N \subset M$, the Hilbert space $L^2(M)\otimes_N L^2(M)$ is determined canonically, and does not depend on the choice of a faithful normal semifinite weight $\varphi$ on $M$ with $L^2(M)=L^2(M,\varphi)$.

	Let $B \subset M$ be an inclusion of von Neumann algebras and $\varphi$ a faithful normal semifinite weight on $M$. The {\em basic construction} of the inclusion $B \subset M$ is defined by 
	$$\langle M, B\rangle := (J_\varphi B J_\varphi)' \cap \mathbb B(L^2(M,\varphi)).$$
Since all standard representations are canonically isomorphic, the basic construction does not depend on the choice of $\varphi$.
Assume that the inclusion $B \subset M$ is with an operator valued weight $E_B$. Fix a faithful normal semifinite weight $\varphi_B$ on $B$ and put $\varphi:=\varphi_B\circ E_B$. 
Here we observe that any $x\in \mathfrak{n}_{E_B} \cap \mathfrak{n}_\varphi$ is left $\varphi$-bounded and $L_{\Lambda_{\varphi}(x)}\Lambda_{\varphi_B}(a)=\Lambda_{\varphi}(xa)$ for $a\in \mathfrak{n}_{\varphi_B}$. 
Indeed, for any analytic $a\in \mathfrak{n}_{\varphi_B} \cap \mathfrak{n}_{\varphi_B}^*$, we have $J_{\varphi_B}\Lambda_{\varphi_B}(a^*) = \Delta_{\varphi_B}^{1/2}\Lambda_{\varphi_B}(a) = \Lambda_{\varphi_B}(\sigma_{-i/2}^{\varphi_B}(a))$ (e.g.\ the equation just before \cite[Lemma VIII.2.4]{Ta01}), and hence by \cite[Lemma V.III.3.18(ii)]{Ta01},
\begin{eqnarray*}
	L_{\Lambda_\varphi(x)}\Lambda_{\varphi_B}(\sigma_{-\frac{i}{2}}^{\varphi_B}(a)) 
	= L_{\Lambda_\varphi(x)}J_{\varphi_B}\Lambda_{\varphi_B}(a^*)
	= J_{\varphi} a^* J_\varphi\Lambda_{\varphi}(x) 
	= \Lambda_{\varphi}(x \sigma^{\varphi}_{-\frac{i}{2}}(a) ). 
\end{eqnarray*}
Since $\sigma_{-i/2}^{\varphi_B}(a)=\sigma_{-i/2}^{\varphi}(a)$ (because $\sigma^{\varphi}_t|_B=\sigma^{\varphi_B}_t$ for $t\in \R$, and the analytic extension is unique if exists), this means that $L_{\Lambda_\varphi(x)}\Lambda_{\varphi_B}(b)=\Lambda_{\varphi}(xb)$ for any analytic $b\in \mathfrak{n}_{\varphi_B} \cap \mathfrak{n}_{\varphi_B}^*$. 
At the same time, we can define a bounded operator $L_x\colon \Lambda_{\varphi_B}(a)\mapsto \Lambda_\varphi(xa)$ for $a\in \mathfrak{n}_{\varphi_B}$ (use $x\in \mathfrak{n}_{E_B}$). So the map $L_{\Lambda_\varphi(x)}$ has a bounded extension on $L^2(B,\varphi_B)$ and coincides with $L_x$, as desired. 
Now it is easy to verify that 
	$$L_{\Lambda_{\varphi}(y)}^*L_{\Lambda_{\varphi}(x)} = E_B(y^*x)\in (J_{\varphi}BJ_{\varphi})'=B \subset \B(L^2(B,\varphi_B)) \qquad (x,y\in \mathfrak{n}_{E_B}\cap \mathfrak{n}_\varphi).$$
We will use this formula for calculations in the proposition below and in  Section \ref{Weakly compact actions}.

	Here we observe that a relative tensor product has a useful identification. We will use this proposition in Sections \ref{Weakly compact actions} and \ref{Proof of Theorem A free quantum group factors}.

\begin{Pro}\label{lemma for relative tensor}
	Let $N$ and $B$ be von Neumann algebras, and $\alpha^N$ and $\alpha^B$ actions of $\R$ on $N$ and $B$ respectively. Put $M:=N\ovt B$ and $\alpha:=\alpha^N\otimes \alpha^B$, and define $H:=L^2(M\rtimes_{\alpha} \R) \otimes_B L^2(M\rtimes_{\alpha} \R)$ as an $M\rtimes_{\alpha}\R$-bimodule with left and right actions $\pi_H$ and $\theta_H$. 

Then there is a unitary $U \colon H \to L^2(\R)\otimes L^2(N) \otimes L^2(B) \otimes L^2(N)\otimes L^2(\R)$ such that, putting $\widetilde{\pi}_H:=\Ad U \circ \pi_H $ and $\widetilde{\theta}_H:=\Ad U \circ \theta_H $,
\begin{itemize}
	\item $\widetilde{\pi}_H( M\rtimes_{\alpha}\R  ) \subset \B(L^2(\R)\otimes L^2(N) \otimes L^2(B)) \otimes \C 1_N \otimes \C 1_{L^2(\R)} :$

		$\widetilde{\pi}_H(\lambda_t) = \lambda_t \otimes 1_N \otimes 1_B$ \quad and \quad $\widetilde{\pi}_H(x) = \pi_{\alpha}(x)$ \quad $(t\in \R$, $x\in N\ovt B =M)$;
	\item $\widetilde{\theta}_H( (M\rtimes_{\alpha}\R)^\op  ) \subset \C 1_{L^2(\R)} \otimes \C 1_N \otimes \B(L^2(B)\otimes L^2(N) \otimes L^2(\R)) :$

		$\widetilde{\theta}_H(\lambda_t^\op ) = 1_B \otimes 1_N \otimes \rho_t$ \quad and \quad $\widetilde{\theta}_H(y^\op ) = \theta_{\alpha}(y^\op )$ \quad $(t\in \R$, $y\in B\ovt N \simeq M)$,

		where $(\theta_{\alpha}(y^\op ) \xi)(s) := \alpha_{s}(y)^\op  \xi(s)$ for $\xi \in L^2(\R,L^2(B)\otimes L^2(N) )$ and $s\in \R$.
\end{itemize}
\end{Pro}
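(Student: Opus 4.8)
The plan is to realize both copies of $L^2(M\rtimes_\alpha\R,\widehat\varphi)$ entering $H$ concretely as $L^2(\R)\otimes L^2(M)=L^2(\R)\otimes L^2(N)\otimes L^2(B)$ through the standard representation of the crossed product recalled in Section~\ref{Preliminaries}, to pin down the $B$-bimodule structures relevant for the relative tensor product, to conjugate the \emph{second} copy by a multiplication unitary built from the canonical implementers of $\alpha$ so as to move an ``$\R$-twist'' off the $B$-action, and finally to pull both $L^2(\R)$-legs out of $\otimes_B$ and apply the canonical fusion isomorphism $L^2(B)\otimes_B L^2(B)\cong L^2(B)$ of the standard $B$-bimodule.

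Write $C:=M\rtimes_\alpha\R$, identify $L^2(C,\widehat\varphi)=L^2(\R,L^2(M))$ with $L^2(M)=L^2(N)\otimes L^2(B)$, denote by $\theta_{L^2(C)}(c^\op):=J_{\widehat\varphi}c^*J_{\widehat\varphi}$ the right action of $C$ on its standard module (and similarly $\theta_{L^2(M)}$ for $M$ on $L^2(M)$), and let $(u(t))_{t\in\R}$ be the canonical implementers of $\alpha$ on $L^2(M)$, so that $t\mapsto u(t)$ is a strongly continuous unitary one-parameter group with $u(t)J_\varphi=J_\varphi u(t)$. A direct computation from the formula $(J_{\widehat\varphi}\xi)(t)=u(t)^*J_\varphi\xi(-t)$, using only $u(s)u(t)=u(s+t)$ and $[u(t),J_\varphi]=0$, gives
\[
\theta_{L^2(C)}(y^\op)=1_{L^2(\R)}\otimes\theta_{L^2(M)}(y^\op)\quad(y\in M),\qquad \big(\theta_{L^2(C)}(\lambda_t^\op)\xi\big)(s)=u(t)^*\,\xi(s-t)\quad(t\in\R).
\]
Hence the right $B$-action on $L^2(C)$ sits on the $L^2(B)$-leg with \emph{no} $\R$-twist, whereas the left $B$-action on $L^2(C)$ equals $\pi_\alpha|_B$, which \emph{is} $s$-twisted on the $L^2(B)$-leg. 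Next let $V$ be the unitary on $L^2(\R,L^2(M))$ with $(V\xi)(s):=u(s)\xi(s)$; the same bookkeeping gives
\[
V\pi_\alpha(x)V^*=1_{L^2(\R)}\otimes x,\qquad V\theta_{L^2(C)}(y^\op)V^*=\theta_\alpha(y^\op),\qquad V\theta_{L^2(C)}(\lambda_t^\op)V^*=\rho_t
\]
for $x,y\in M$ and $t\in\R$, where $\theta_\alpha$ is as in the statement and $\rho_t$ is a translation on $L^2(\R)$.

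Now conjugate the second copy of $L^2(C)$ in $H$ by $V$, leaving the first copy (which carries $\pi_H$) untouched. After this, both the right $B$-action on the first copy $L^2(\R)\otimes L^2(M)$ and the left $B$-action on the second copy $L^2(M)\otimes L^2(\R)$ are of the form $1_{L^2(\R)}\otimes(\,\cdot\,)$ on the $L^2(M)$-leg, so $B$ acts trivially on the two $L^2(\R)$-factors and a direct check gives
\begin{align*}
	H\;&\cong\;L^2(\R)\otimes\big(L^2(M)\otimes_B L^2(M)\big)\otimes L^2(\R)\\
	&\cong\;L^2(\R)\otimes L^2(N)\otimes\big(L^2(B)\otimes_B L^2(B)\big)\otimes L^2(N)\otimes L^2(\R)\\
	&\cong\;L^2(\R)\otimes L^2(N)\otimes L^2(B)\otimes L^2(N)\otimes L^2(\R),
\end{align*}
the middle line using $L^2(M)=L^2(N)\otimes L^2(B)$ (the action of $B\subset M$ being carried on the $L^2(B)$-leg on both sides), and the last line being the fusion map $\xi\otimes_B v\mapsto L_\xi v$ of the standard $B$-bimodule, i.e.\ $\Lambda_{\varphi_B}(b)\otimes_B v\mapsto bv$. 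Composing the conjugation by $1\otimes V$ with these identifications defines the unitary $U$.

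It remains to read off $\widetilde\pi_H=\Ad U\circ\pi_H$ and $\widetilde\theta_H=\Ad U\circ\theta_H$. Since $\pi_H$ acts only on the first copy and $V$ never touched it, one obtains $\widetilde\pi_H(x)=\pi_\alpha(x)$ and $\widetilde\pi_H(\lambda_t)=\lambda_t\otimes 1_N\otimes 1_B$ at once, \emph{provided} one checks that the left $M$-action on $L^2(M)\otimes_B L^2(M)$ is carried by the fusion map to $M\otimes 1_{L^2(N)}$; this is a short calculation using the identities $L_{x\xi}=xL_\xi$ (for $x$ in the commutant of the right action) and $L_\xi\in B$ recorded in Section~\ref{Preliminaries}. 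Symmetrically the right $M$-action is carried to $1_{L^2(N)}\otimes\theta_{L^2(M)}$ on the $L^2(B)\otimes L^2(N)$-legs, which together with the formulas for $V\theta_{L^2(C)}(\,\cdot\,)V^*$ above yields $\widetilde\theta_H(y^\op)=\theta_\alpha(y^\op)$ and $\widetilde\theta_H(\lambda_t^\op)=1_B\otimes 1_N\otimes\rho_t$; the asserted containments then follow by inspecting which tensor legs each operator occupies. The conceptual content is only ``untwist the second copy, then everything splits''; the main points requiring care are the $J_{\widehat\varphi}$-bookkeeping that produces the module structure of $L^2(C)$ and the verification that the $B$-fusion isomorphism transports both $M$-actions into tensor-leg form.
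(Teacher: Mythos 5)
Your argument is correct and produces the same unitary as the paper, but the two proofs are organized quite differently, so a comparison is worth recording. The paper defines $U$ in one stroke on the dense set of vectors $f\otimes_B J_{\widehat{\varphi}}g$ with $f,g$ ranging over $C_c(\R,M)$-type elements, via the pointwise prescription $\Lambda_\varphi(x\otimes a)\otimes J_\varphi\Lambda_\varphi(b\otimes y)\mapsto \Lambda_{\varphi_N}(x)\otimes aJ_{\varphi_B}\Lambda_{\varphi_B}(b)\otimes J_{\varphi_N}\Lambda_{\varphi_N}(y)$, and proves isometry by a single global inner-product computation resting on $L_{f_j}^*L_{f_i}=E_B(\widehat{\pi}_\alpha(f_j)^*\widehat{\pi}_\alpha(f_i))$ and the factorization $E_B=E_B^M\circ E_M$; the cancellation of the $\alpha_s$-twist is buried inside that computation, in the step where $(J_{\widehat{\varphi}}g)(s)=u(s)^*J_\varphi\Lambda_\varphi(g(-s))$ absorbs the term $\alpha_s^{-1}(F_{j,i})$. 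You instead factor $U$ into three standard pieces: the identification of the right module structure of $L^2(M\rtimes_\alpha\R)$ from the $J_{\widehat{\varphi}}$-formula, the untwisting conjugation $(V\xi)(s)=u(s)\xi(s)$ applied to the second leg only, and the extraction of the $L^2(\R)$-legs from $\otimes_B$ followed by the fusion $L^2(B)\otimes_B L^2(B)\cong L^2(B)$. This is more transparent about why the twist disappears and why the right action comes out as $\theta_\alpha$, and it treats the intertwining relations (which the paper dismisses as ``easy to see'') explicitly via $L_{x\xi}=xL_\xi$ and $L_\eta^*L_\xi\in B$; the price is that the two ``standard facts'' you invoke (leg extraction and fusion), once written out, reduce to essentially the same inner-product computations the paper performs directly, so no work is really saved. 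Two small points: your final chain of identifications silently flips the second copy of $L^2(M)=L^2(N)\otimes L^2(B)$ into $L^2(B)\otimes L^2(N)$ to match the stated ordering, and the assertion ``$L_\xi\in B$'' should read $L_{\Lambda_{\varphi_B}(b)}=b$ for $b\in\mathfrak{n}_{\varphi_B}$ (or $L_\eta^*L_\xi\in B$); neither affects the argument.
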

\begin{proof}
	We fix a faithful normal semifinite weight $\varphi_B$ on $B$ and put $\varphi:=\varphi_N\otimes \varphi_B$. Write as $\widehat{\varphi}$ the dual weight of $\varphi$ and then the standard representation of $M\rtimes_\alpha \R$ is given by 
	$$L^2(M\rtimes_\alpha \R, \widehat{\varphi})=L^2(N, \varphi_N)\otimes L^2(B,\varphi_B)\otimes L^2(\R)\simeq L^2(\R, L^2(N, \varphi_N)\otimes L^2(B,\varphi_B)).$$ 
For simplicity we put $L^2(N):=L^2(N,\varphi_N)$ and $L^2(B):= L^2(B,\varphi_B)$.  Let $E_B$ be the canonical operator valued weight from $\widetilde{M}$ to $B$ given by $\widehat{\varphi}=\varphi_B\circ E_B$. Then for $E_B^M:= \varphi_N\otimes \id_B$ on $M$ and for the canonical operator valued weight $E_M$ from $(M\rtimes \R, \widehat{\varphi})$ to $(M,\varphi)$, we have $\widehat{\varphi}=\varphi\circ E_M = \varphi_B \circ E_B^M \circ E_M$, and hence $E_B=E_B^M \circ E_M$ by the uniqueness condition. 
Observe then for any $f,g \in C_c (\R, M)$, 
	$$E_B(\widehat{\pi}_\alpha(g)^*\widehat{\pi}_\alpha(f)) = \int_\R E_B^M (g(t)^* f(t)) dt.$$
Define a well-defined linear map 
$$V \colon \Lambda_\varphi(\mathfrak{n}_{\varphi_N} \ota \mathfrak{n}_{\varphi_B}) \ota J_{\varphi} \Lambda_\varphi(\mathfrak{n}_{\varphi_B} \ota \mathfrak{n}_{\varphi_N}) \to L^2(N) \otimes L^2(B) \otimes L^2(N)$$ 
by $V (\Lambda_{\varphi}(x\otimes a) \otimes J_{\varphi} \Lambda_{\varphi}(b \otimes y)) := \Lambda_{\varphi_N}(x)\otimes aJ_{\varphi_B}\Lambda_{\varphi_B}(b) \otimes J_{\varphi_N}\Lambda_{\varphi_N}(y) $. 
We then define a linear map 
$$U \colon L^2(\R ,L^2(N) \otimes L^2(B)) \otimes_B L^2(\R ,L^2(B) \otimes L^2(N)) \to L^2(\R\times \R ,L^2(N) \otimes L^2(B) \otimes L^2(N))$$
by $(U (f\otimes_B J_{\widehat{\varphi}} g) )(t,s) := V (\Lambda_\varphi(f (t)) \otimes J_\varphi\Lambda_\varphi(g(-s))) $ for $f\in C_c (\R, N\ota B)(\mathfrak{n}_{\varphi_N} \ota \mathfrak{n}_{\varphi_B})$ and $g\in C_c (\R, B\ota N)(\mathfrak{n}_{\varphi_B} \ota \mathfrak{n}_{\varphi_N})$. (Note that we are identifying $\Lambda_{\widehat{\varphi}}(\widehat{\pi}_\alpha(f))$ and $\Lambda_{\widehat{\varphi}}(\widehat{\pi}_\alpha(g))$ as $f$ and $g$.) We have to show that it is a well-defined unitary map. For $f_i\in C_c (\R, N\ota B)(\mathfrak{n}_{\varphi_N} \ota \mathfrak{n}_{\varphi_B})$ and $g_i\in C_c (\R, B\ota N)(\mathfrak{n}_{\varphi_B} \ota \mathfrak{n}_{\varphi_N})$, straightforward but rather complicated computations yield, on the one hand,
\begin{eqnarray*}
	\|\sum_i f_i \otimes_B J_{\widehat{\varphi}}g_i\|_2^2 
	= \sum_{i,j} \int_\R \int_\R \langle  F_{j,i}  J_\varphi \Lambda_\varphi(g_i(-s)), J_\varphi \Lambda_\varphi(g_j(-s))  \rangle ds dt,
\end{eqnarray*}
where $F_{j,i}:=E_B^M(f_j(t)^*f_i(t))$, and on the other hand,
\begin{eqnarray*}
	&&\| U\sum_i  (f_i \otimes_B J_{\widehat{\varphi}}g_i)\|_2^2 \\
	&=& \sum_{i,j} \int_{\R\times \R}\langle V (\Lambda_\varphi(f_i(t)) \otimes J_\varphi \Lambda_\varphi(g_i(-s))), V (\Lambda_{\varphi}(f_j(t)) \otimes J_\varphi \Lambda_\varphi(g_i(-s))) \,  \rangle dtds.
\end{eqnarray*}
Hence if we show 
	$$\langle V (\Lambda_\varphi(x) \otimes J_\varphi \Lambda_\varphi(a)), V (\Lambda_\varphi(y) \otimes J_\varphi \Lambda_\varphi(b)) \,  \rangle = \langle  E_B^M(y^*x)  J_\varphi \Lambda_{\varphi}(a), J_\varphi \Lambda_{\varphi}(b)  \rangle$$
for any $x,y \in \mathfrak{n}_{\varphi_N}\ota \mathfrak{n}_{\varphi_B}$ and $a,b \in \mathfrak{n}_{\varphi_B}\ota \mathfrak{n}_{\varphi_N}$, then $U$ is a well-defined unitary map. However this equation follows easily if we put elementary elements. 

	Finally $L^2(\R \times \R, L^2(N) \otimes L^2(B) \otimes L^2(N))$ is canonically isomorphic to $L^2(\R) \otimes L^2(N) \otimes L^2(B) \otimes L^2(N) \otimes L^2(\R)$, where the first (resp.\ the second) variable in $\R \times \R$ corresponds to $L\R$ of the left one (resp.\ the right one) in the Hilbert space. It is then easy to see that $\widetilde{\pi}_H$ and $\widetilde{\theta}_H$ satisfy the desired condition.
\end{proof}

	Let $M$ and $N$ be von Neumann algebras, and let $H$ and $K$ be $M$-$N$-bimodules. We write as $\pi_H$ and $\theta_H$ (resp.\ $\pi_K$ and $\theta_K$) left and right actions on $H$ (resp.\ $K$). 
We say that \textit{$K$ is weakly contained in $H$}, denoted by $K \prec H$, if for any $\varepsilon>0$, finite subsets $\mathcal{E}\subset M$ and $\mathcal{F}\subset N$, and any vector $\xi \in K$, there are vectors $(\eta_i)_{i=1}^n \subset H$ such that 
	$$\left|\sum_{i=1}^n \langle \pi_H(x)\theta_H(y^\op )\eta_i , \eta_i \rangle_H - \langle \pi_K(x)\theta_K(y^\op )\xi , \xi \rangle_K \right| < \varepsilon \quad (x\in \mathcal{E}, \ y\in \mathcal{F}).$$
This is equivalent to saying that the algebraic $\ast$-homomorphism given by $\pi_H(x) \theta_H(y^\op ) \mapsto \pi_K(x) \theta_K(y^\op )$ for $x\in M$ and $y\in N$ is bounded on $\ast\text{-alg}\{\pi_H(M), \theta_H(N^\op ) \}$. We write as $\nu_{K,H}$ the associated $\ast$-homomorphism for $K\prec H$.

	Let $M$ and $N$ be \textit{$\sigma$-finite} von Neumann algebras and let $X$ be a self-dual $M$-$N$-correspondence (i.e.\ a Hilbert $N$-module with a normal left $M$-action, see \cite[Section 3]{Pa73} for self-duality and normality). Then the \textit{interior tensor product} (e.g.\ \cite[Section 4]{La95}) $H(X):=X\otimes_N L^2(N)$ is an $M$-$N$-bimodule. Conversely if $H$ is an $M$-$N$-bimodule, then one can define a self-dual $M$-$N$-correspondence (i.e.\ a W$^*$-Hilbert $N$-module with a left $M$-action)
$$X(H):= \{T\colon L^2(N) \to H \mid \text{bounded, $N^\op $-module linear map} \}.$$
They in fact give a one-to-one correspondence between $M$-$N$-bimodules and self-dual $M$-$N$-correspondences, up to unitary equivalence (see \cite[Theorem 2.2]{BDH88} and \cite[Proposition 6.10]{Ri74}). 
By \cite[\S1.12 PROPOSITION]{AD88}, $K \prec H$ if and only if $X(K) \prec X(H)$ in the following sense: for any $\sigma$-weak neighborhood $\mathcal{V}$ of $0\in N$, finite subsets $\mathcal{E}\subset M$ and $\mathcal{F}\subset N$, and any $\xi \in X(K)$, there are vectors $(\eta_i)_{i=1}^n \subset X(H)$ such that 
	$$\sum_{i=1}^n \langle \eta_i , x \eta_i y \rangle_{X(H)} - \langle \xi , x \xi y \rangle_{X(K)} \in \mathcal{V} \quad (x\in \mathcal{E}, \ y\in \mathcal{F}).$$

	Suppose that $M=N$, $L^2(M)=K$, and $M=X(K)$. Then if $L^2(M) \prec H$, putting $\xi:= 1_M$, for any finite subset $\mathcal{E}\subset M$ and for any $\sigma$-weak neighborhood $\mathcal{V}$ of $0\in N$, there are vectors $(\eta_i)_{i=1}^n \subset X(H)$ such that 
	$$\sum_{i=1}^n \langle \eta_i , x \eta_i \rangle_{X(H)} -  x  \in \mathcal{V} \quad (x\in \mathcal{E}).$$
So putting $\psi_{(\mathcal{E}, \mathcal{V})}(x):=\sum_{i=1}^n \langle \eta_i , x \eta_i \rangle_{X(H)}$ for $x\in M$, we find a net $(\psi_i)_i$ such that each $\psi_i$ is given by a sum of compression maps by vectors in $X(H)$ and that it converges to $\mathrm{id}_M$ in the point $\sigma$-weak topology. In this case, up to replacing $\eta_i$, we may assume that each $\psi_i$ is a contraction \cite[Lemma 2.2]{AH88}. Then it is known that the existence of such a net is equivalent to $L^2(M) \prec H$ as follows, although we do not need this equivalence. See \cite[Proposition 2.4]{AH88} for a more general statement.

\begin{Pro}\label{approximation of multiplication}
	Let $M$ be a $\sigma$-finite von Neumann algebra and  $H$ an $M$-bimodule. Then $L^2(M) \prec H$ as $M$-bimodules if and only if there is a net $(\psi_i)_i$ of normal c.c.p.\ maps on $M$, which converges to $\id_M$ point $\sigma$-weakly, such that each $\psi_i$ is a finite sum of $\langle \eta, \, \cdot \, \eta \rangle_{X(H)}$ for some $\eta\in X(H)$.
\end{Pro}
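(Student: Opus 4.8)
The plan is to prove the two implications of Proposition \ref{approximation of multiplication} by translating back and forth between the language of $M$-bimodules (Hilbert spaces with commuting left and right actions) and the language of self-dual $M$-$M$-correspondences $X(H)$, using the dictionary recalled just above from \cite{BDH88,AD88,Ri74}. The key point is the identification $\langle \eta, x\eta y\rangle_{X(H)}\in M$ for $\eta\in X(H)$, $x,y\in M$, and the fact that $L^2(M)$ corresponds to $X(L^2(M))=M$ with $1_M$ serving as a cyclic vector.

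For the ``only if'' direction, assume $L^2(M)\prec H$. By the equivalence $K\prec H\iff X(K)\prec X(H)$ applied with $K=L^2(M)$ and cyclic vector $\xi=1_M\in X(L^2(M))=M$, for each finite $\mathcal E\subset M$ and each $\sigma$-weak neighborhood $\mathcal V$ of $0$ one obtains $\eta_1,\dots,\eta_n\in X(H)$ with $\sum_i\langle\eta_i,x\eta_i\rangle_{X(H)}-x\in\mathcal V$ for all $x\in\mathcal E$. Setting $\psi_{(\mathcal E,\mathcal V)}(x):=\sum_i\langle\eta_i,x\eta_i\rangle_{X(H)}$ gives a normal completely positive map which, by \cite[Lemma 2.2]{AH88}, may be taken completely contractive after replacing the $\eta_i$; the net $(\psi_{(\mathcal E,\mathcal V)})$ indexed by pairs $(\mathcal E,\mathcal V)$ converges to $\id_M$ in the point $\sigma$-weak topology, which is exactly the desired net. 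For the ``if'' direction, given such a net $(\psi_i)_i$ with $\psi_i=\sum_j\langle\eta_j^{(i)},\,\cdot\,\eta_j^{(i)}\rangle_{X(H)}$, one directly estimates, for $x\in M$, $y\in M$, and unit vector $\Lambda(a)\in L^2(M)$ ($a\in M$ suitably chosen, e.g.\ from the Tomita algebra), the quantity $\sum_j\langle\eta_j^{(i)},x\eta_j^{(i)}y\rangle_{X(H)}$ against $\langle 1_M,x\,1_M\,y\rangle_{X(L^2(M))}=\varphi_M$-type pairings, using that $\psi_i\to\id_M$ $\sigma$-weakly to control the difference; this gives $X(L^2(M))\prec X(H)$ and hence $L^2(M)\prec H$ by the cited dictionary.

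Concretely, the implementation should go as follows. First I would recall precisely the two translation statements: the bijection $H\leftrightarrow X(H)$ between $M$-$M$-bimodules and self-dual $M$-$M$-correspondences (\cite[Theorem 2.2]{BDH88}, \cite[Proposition 6.10]{Ri74}), and the weak-containment equivalence $K\prec H\iff X(K)\prec X(H)$ in the $X$-correspondence sense spelled out above (\cite[\S1.12 PROPOSITION]{AD88}). Second, I would record that $X(L^2(M))\cong M$ as a correspondence with unit vector $1_M$, so that the general weak-containment condition with test vector $\xi=1_M$ reduces to the stated approximation-of-multiplication condition. Third, I would run the two directions as above, inserting the normalization step \cite[Lemma 2.2]{AH88} to pass from c.p.\ to c.c.p.\ maps, and pointing to \cite[Proposition 2.4]{AH88} for the general framework.

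The main obstacle is not any single hard estimate but making the correspondence-theoretic bookkeeping airtight: one must be careful that the pairing $\langle\eta,\,\cdot\,\eta\rangle_{X(H)}$ really lands in $M$ (normality of the left action and self-duality are what guarantee this), that weak containment in the $M$-bimodule sense and in the $X$-correspondence sense genuinely match under the dictionary (the quantifier over $\sigma$-weak neighborhoods of $0$ rather than norm-$\varepsilon$ balls on the $N=M$ side is essential and must be stated correctly), and that the cyclicity of $1_M$ in $X(L^2(M))$ is used correctly so that testing against a single vector suffices. Since all of these are standard and the cited references do the heavy lifting, the proof itself will be short: essentially a paragraph assembling the dictionary entries plus the normalization lemma. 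I would therefore keep the written proof to a few lines, explicitly noting that the forward direction was already essentially carried out in the discussion preceding the proposition, and that the converse follows by reversing that discussion together with \cite[\S1.12 PROPOSITION]{AD88}.
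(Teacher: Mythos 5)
Your proposal is correct and follows essentially the same route as the paper: the forward direction is exactly the discussion preceding the proposition (the dictionary of \cite{BDH88,AD88,Ri74}, testing on the cyclic vector $1_M\in X(L^2(M))=M$, and the normalization to c.c.p.\ maps via \cite[Lemma 2.2]{AH88}), while the converse --- which the paper does not actually need and simply delegates to \cite[Proposition 2.4]{AH88} --- is handled by your sketch in the same spirit (note only that the test objects there should be elements of $X(L^2(M))=M$ rather than vectors $\Lambda(a)\in L^2(M)$, and that cyclicity of $1_M$ together with $\langle\eta a,x(\eta a)y\rangle=a^*\langle\eta,x\eta\rangle ay$ reduces the general test vector to $1_M$). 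No gaps.
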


	We recall the following well-known fact. This will be used in Section \ref{Weakly compact actions}.

\begin{Lem}\label{correspondence with operator weight}
	Let $B\subset M$ be an inclusion of $\sigma$-finite von Neumann algebras with an operator valued weight $E_B$. 
Then the vector space $\mathfrak{n}_{E_B}$ is a pre-Hilbert $B$-module with the inner product $\langle x,y \rangle := E_B(x^*y)$ for $x,y \in\mathfrak{n}_{E_B}$, and its self-dual completion $\overline{\mathfrak{n}_{E_B}}$ is an $M$-$B$-correspondence. 

	Let $X$ be the self-dual completion of the interior tensor product $\overline{\mathfrak{n}_{E_B}} \otimes_B M$. Then as an $M$-$M$-correspondence, $X$ is the unique one corresponding to the $M$-bimodule $L^2(M)\otimes_B L^2(M)$, using the one-to-one correspondence above.
\end{Lem}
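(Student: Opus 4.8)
I will establish the desired unitary equivalence by composing the two canonical identifications furnished by the general theory recalled just above: the spatial-derivative description of a standard representation (Proposition \ref{pro standard rep of basic construction}) and the correspondence–bimodule dictionary (the results cited after it). Concretely, recall that $\mathfrak n_{E_B}$ is a pre-Hilbert $B$-module via $\langle x,y\rangle := E_B(x^*y)$, that $M$ acts on the left (since $E_B$ is a $B$-bimodule map, $\|E_B((ax)^*(ax))\|_\infty \le \|a\|_\infty^2\|E_B(x^*x)\|_\infty$), and that the interior tensor product of its self-dual completion $\overline{\mathfrak n_{E_B}}$ with the trivial $B$-$B$-correspondence $B$ — together with a further self-dual completion — gives a self-dual $M$-$B$-correspondence; the associated bimodule $H(\overline{\mathfrak n_{E_B}}) = \overline{\mathfrak n_{E_B}}\otimes_B L^2(B,\varphi_B)$ should be identified with $L^2(M,\varphi)$ where $\varphi:=\varphi_B\circ E_B$. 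This is exactly the content already observed in the excerpt: for $x\in\mathfrak n_{E_B}\cap\mathfrak n_\varphi$ the vector $\Lambda_\varphi(x)$ is left $\varphi$-bounded, $L_{\Lambda_\varphi(x)}\Lambda_{\varphi_B}(a)=\Lambda_\varphi(xa)$, and $L_{\Lambda_\varphi(y)}^*L_{\Lambda_\varphi(x)} = E_B(y^*x)$, so the map $x\otimes_B \Lambda_{\varphi_B}(a) \mapsto \Lambda_\varphi(xa)$ is a well-defined $M$-$B$-bimodule isometry with dense range.

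**Key steps, in order.** First I would make precise the claim in the first paragraph of the lemma: $\mathfrak n_{E_B}$ with the $B$-valued inner product is a pre-Hilbert $B$-module, $M$ acts normally on the left, and its self-dual completion $\overline{\mathfrak n_{E_B}}$ is a (normal, self-dual) $M$-$B$-correspondence; here I invoke $\sigma$-finiteness so that self-dual completions behave well and the Paschke/Rieffel dictionary applies. Second, I would identify the bimodule $H(\overline{\mathfrak n_{E_B}}) = \overline{\mathfrak n_{E_B}}\otimes_B L^2(B,\varphi_B)$ with $L^2(M,\varphi)$ via the isometry $x\otimes_B\Lambda_{\varphi_B}(a)\mapsto\Lambda_\varphi(xa)$, checking it intertwines the left $M$-actions and the right $B$-actions and has dense range — all of which is essentially recorded in Subsection \ref{Relative tensor products, basic constructions and weak containments}. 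Third, I would form the interior tensor product with $M$: under the dictionary $H \leftrightarrow X(H)$, interior tensor product of correspondences corresponds to relative (Connes) tensor product of bimodules over the common algebra, so $X := $ (self-dual completion of) $\overline{\mathfrak n_{E_B}}\otimes_B M$ corresponds to $H(\overline{\mathfrak n_{E_B}})\otimes_B L^2(M) \cong L^2(M,\varphi)\otimes_B L^2(M,\varphi)$. Finally, I would invoke uniqueness in the one-to-one correspondence between self-dual $M$-$M$-correspondences and $M$-bimodules (up to unitary equivalence), which pins down $X$ as the asserted one; since $L^2(M)\otimes_B L^2(M)$ is independent of the choice of faithful normal semifinite weight with $L^2(M)=L^2(M,\varphi)$, the statement is weight-independent.

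**Main obstacle.** The genuinely delicate point is the compatibility of \emph{two} self-dual completions with the interior tensor product: one must check that completing $\overline{\mathfrak n_{E_B}}\otimes_B M$ to a self-dual correspondence yields the same object as $X\big(H(\overline{\mathfrak n_{E_B}})\otimes_B L^2(M)\big)$, i.e.\ that taking interior tensor products commutes (up to the canonical unitary) with passing back and forth through the $H(\cdot)$ and $X(\cdot)$ functors. This is where I would lean on \cite[Theorem 2.2]{BDH88} and \cite[Proposition 6.10]{Ri74} together with the basic compatibility of interior tensor products with the correspondence dictionary, rather than redoing the Hilbert-module bookkeeping by hand; the remaining verifications (that the left $M$-action on $X$ is normal, that the inner products match on the dense submodule $\mathfrak n_{E_B}\ota M$ spanned by elementary tensors) are routine once the earlier formula $L_{\Lambda_\varphi(y)}^*L_{\Lambda_\varphi(x)}=E_B(y^*x)$ is in hand.
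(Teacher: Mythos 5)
Your argument is correct and follows essentially the same route as the paper: the paper's proof likewise verifies the pre-Hilbert $B$-module axioms, gets normality of the left $M$-action on $\overline{\mathfrak{n}_{E_B}}$ from \cite[Lemma 2.3]{Pa75}, and then identifies $X\otimes_M L^2(M,\varphi)$ with $L^2(M,\varphi_B\circ E_B)\otimes_B L^2(M,\varphi)$ via the same formula $L_{\Lambda_\varphi(y)}^*L_{\Lambda_\varphi(x)}=E_B(y^*x)$ from Subsection \ref{Relative tensor products, basic constructions and weak containments}. Your version merely factors that identification through the intermediate step $\overline{\mathfrak{n}_{E_B}}\otimes_B L^2(B,\varphi_B)\cong L^2(M,\varphi)$, and the compatibility issue you flag is exactly what the paper's word ``canonically'' is silently absorbing.
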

\begin{proof}
	It is easy to see that the $B$-valued inner product on $\mathfrak{n}_{E_B}$ in the statement is well-defined, so that $\mathfrak{n}_{E_B}$ is a pre-Hilbert $B$-module with a left $M$-action. 
Since the left $M$-action is faithful on $\mathfrak{n}_{E_B}$, so does on the self-dual completion (e.g.\ \cite[Corollary 3.7]{Pa73}). 
This left $M$-action is normal, since the functional $M \ni x \mapsto \omega (\langle \xi , x\eta \rangle )$ is normal for all $\omega\in M_*$ and $\xi,\eta \in \mathfrak{n}_{E_B}$, and hence for all $\xi,\eta \in \overline{\mathfrak{n}_{E_B}}$ by \cite[Lemma 2.3]{Pa75}. Thus $\overline{\mathfrak{n}_{E_B}}$ is an $M$-$B$-correspondence. 

	Let $X$ be as in the statement. Then as in the first paragraph, it is easy to see that it is really an $M$-$M$-correspondence (i.e.\ the left $M$-action is well-defined, injective, and normal). 
Let us fix faithful normal states $\varphi_B$ and $\varphi$ on $B$ and $M$ respectively. Then the interior tensor product $X\otimes_{M} L^2(M,\varphi)$ is canonically identified as $L^2(M, \varphi_B\circ E_M)\otimes_{B} L^2(M,\varphi)$, so that $X$ is identified as  $X(L^2(M)\otimes_{B}L^2(M))$. 
\end{proof}

\subsection{\bf Free quantum groups and bi-exactness}\label{Free quantum groups and bi-exactness}

	For compact quantum groups, we refer the reader to \cite{Wo95,MV98}.

	Let $\mathbb{G}$ be a compact quantum group. In this paper, we use the following notation, which will only be used in Section \ref{Proof of Theorem A free quantum group factors}. 
We denote the Haar state by $h$, the set of equivalence classes of all irreducible unitary corepresentations by $\mathrm{Irred}(\mathbb{G})$, and right and left regular representations by $\rho$ and $\lambda$ respectively. We regard $C_{\rm red}(\mathbb{G}):=\rho(C(\mathbb{G}))$ as our main object and we frequently omit $\rho$ when we see the dense Hopf $*$-algebra. 
The GNS representation of $h$ is written as $L^2(\mathbb{G})$ and it has a decomposition $L^2(\mathbb{G})=\sum_{x\in\mathrm{Irred}(\mathbb{G})}\oplus (H_x\otimes H_{\bar{x}})$. Along the decomposition, the modular operator of $h$ is of the form $\Delta_h^{it}=\sum_{x\in\mathrm{Irred}(\mathbb{G})}\oplus (Q_x^{it}\otimes Q_{\bar{x}}^{-it})$ for some positive matrices $Q_x$.

	Let $F$ be a matrix in $\mathrm{GL}(n,\mathbb{C})$. The \textit{free unitary quantum group} (resp.\ \textit{free orthogonal quantum group}) for $F$ \cite{Wa94,VW95} is the C$^*$-algebra $C(A_u(F))$ (resp.\ $C(A_o(F))$) defined as the universal unital C$^*$-algebra generated by all the entries of a unitary $n$ by $n$ matrix $u=(u_{i,j})_{i,j}$ satisfying that $F(u_{i,j}^*)_{i,j}F^{-1}$ is a unitary (resp.\ $F(u_{i,j}^*)_{i,j}F^{-1}=u$). We simply say that $\G$ is a \textit{free quantum group} if $\G$ is a free unitary or orthogonal quantum group. 

	Here we recall the notion of bi-exactness introduced in \cite[Definition 3.1]{Is13}, based on the group case \cite[Lemma 15.1.2]{BO08}.

\begin{Def}\upshape\label{bi-exact definition}
	Let $\mathbb{G}$ be a compact quantum group. We say that the dual $\widehat{\mathbb{G}}$ is \textit{bi-exact} if it satisfies following conditions:
\begin{itemize}
	\item[$\rm (i)$] $\widehat{\mathbb{G}}$ is exact (i.e.\ $C_{\rm red}(\mathbb{G})$ is exact);
	\item[$\rm (ii)$]  there exists a u.c.p.\ map $\Theta\colon C_{\rm red}(\mathbb{G})\otimes_{\rm min}  C_{\rm red}(\mathbb{G})^\op  \rightarrow \mathbb{B}(L^2(\mathbb{G}))$ such that 
	$$\Theta(a\otimes b^\op )-ab^\op \in \mathbb{K}(L^2(\mathbb{G})),  \quad \text{for any } a,b \in  C_{\rm red}(\mathbb{G}).$$
\end{itemize}
\end{Def}

	Bi-exactness of free quantum groups were proved in \cite{Ve04,VV05,VV08}. See \cite[Theorem C]{Is13} for other examples of bi-exact quantum groups.

\begin{Thm}\label{properties of free quantum group}
	Let $\G$ be a free quantum group (more generally, a compact quantum group in \cite[Theorem C]{Is13}). Then the dual $\widehat{\G}$ is bi-exact.
\end{Thm}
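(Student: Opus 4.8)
The plan is to reduce the statement to the free \emph{orthogonal} quantum groups $A_o(F)$, where the underlying combinatorial structure is cleanest, and then propagate the conclusion to the free unitary case and to the wider class of \cite[Theorem C]{Is13} by stability of the two defining conditions under monoidal equivalence and free products. Throughout, the single geometric input is an action of $\G$ on the compactification of the \emph{quantum Cayley tree} of $\widehat{\G}$: for $\G = A_o(F)$ one has $\mathrm{Irred}(\G) \cong \N$ (the fusion rules of $SU(2)$), so $L^2(\G) = \bigoplus_{n\in\N} H_n$ is graded by the spectral subspaces of the irreducible corepresentations, and $C_{\rm red}(\G)$ acts on this tree-like object in a way that is compatible with the grading up to finite range.

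For condition (i), exactness, I would invoke Vergnioux \cite{Ve04}: the boundary action on the natural compactification of the quantum Cayley tree is topologically amenable, and by the Anantharaman-Delaroche--Ozawa characterization of exactness via amenable actions (cf.\ \cite[Chapter 5]{BO08}) this forces $C_{\rm red}(A_o(F))$ to be exact. For $\G = A_u(F)$ one uses that $C_{\rm red}(A_u(F))$ is assembled from free-orthogonal building blocks via monoidal equivalence and free-product constructions as in Vaes--Vergnioux \cite{VV05}, and exactness is preserved by these operations; the same passage handles the quantum groups of \cite[Theorem C]{Is13}.

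For condition (ii) the key is the detailed analysis of the quantum Cayley graph in \cite{VV08}: on $L^2(A_o(F))$ the left action of $C_{\rm red}(\G)$ and the right action of $C_{\rm red}(\G)^\op$ asymptotically commute along the grading, and more precisely the GNS multiplication operators are approximated, modulo $\K(L^2(\G))$, by the ``adjacency/creation'' operators attached to the tree. Consequently the multiplication map $a \otimes b^\op \mapsto ab^\op + \K(L^2(\G))$ is a well-defined $\ast$-homomorphism from $C_{\rm red}(\G) \ota C_{\rm red}(\G)^\op$ into the Calkin algebra $\B(L^2(\G))/\K(L^2(\G))$; by exactness (Step~(i)) the algebraic tensor product completes to $C_{\rm red}(\G)\otm C_{\rm red}(\G)^\op$, and since a $\ast$-homomorphism between C$^*$-algebras is completely contractive, this descends to a unital $\ast$-homomorphism on $C_{\rm red}(\G)\otm C_{\rm red}(\G)^\op$. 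Lifting it to the required u.c.p.\ map $\Theta$ into $\B(L^2(\G))$ with $\Theta(a\otimes b^\op) - ab^\op \in \K(L^2(\G))$ uses the nuclearity of the crossed product coming from the amenable boundary action (one works inside a nuclear C$^*$-algebra that maps onto the relevant operator system and applies the Choi--Effros lifting there), exactly as carried out in \cite{Ve04,VV08}. Transfer to $A_u(F)$ and to the general case of \cite[Theorem C]{Is13} again goes through monoidal equivalence and free products, both of which preserve condition (ii).

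The main obstacle is the input to Step~(ii): establishing the asymptotic commutation of the left and right regular representations modulo $\K(L^2(\G))$ for the building blocks $A_o(F)$. This rests on the explicit description of the coproduct in terms of the quantum Cayley graph together with norm estimates on the associated creation operators, and is precisely the technical heart of \cite{VV08}; by contrast, the stability statements (under monoidal equivalence, free products, and passage to the reduced algebra) and the extraction of exactness from topological amenability of the boundary action are comparatively routine.
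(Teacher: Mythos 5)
The paper does not actually prove this statement: it is quoted from the literature, the sentence preceding it pointing to \cite{Ve04,VV05,VV08} and to \cite[Theorem C]{Is13}, so there is no in-text argument to compare yours against. Your outline does follow the strategy of those references (the quantum Cayley tree of $\widehat{A_o(F)}$, amenability of the boundary action for exactness, commutation of the left and right regular representations modulo $\K(L^2(\G))$ for the Akemann--Ostrand part, and transfer to $A_u(F)$ and to the class of \cite[Theorem C]{Is13} via free products and monoidal equivalence), so as a pointer to where the proof lives it is essentially accurate. Two corrections are in order, though. First, the attributions are shuffled: exactness via the amenable boundary action is \cite{VV05}, while the modulo-compacts analysis of the two regular representations on the Cayley tree is the content of \cite{Ve04}; \cite{VV08} concerns the Poisson boundary of $\widehat{A_u(F)}$. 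Second, and more substantively, the step ``by exactness the algebraic tensor product completes to $C_{\rm red}(\G)\otm C_{\rm red}(\G)^\op$'' is not a valid inference: a $\ast$-homomorphism defined on $C_{\rm red}(\G)\ota C_{\rm red}(\G)^\op$ with values in the Calkin algebra is automatically continuous only for the \emph{maximal} tensor norm, and its continuity for the minimal norm is precisely the nontrivial content of condition (AO); it is obtained by factoring the map through the nuclear crossed product of the amenable boundary action (where the two norms coincide), not from exactness of $C_{\rm red}(\G)$. Exactness (or the nuclearity of that factoring) enters afterwards, to lift the resulting Calkin-valued u.c.p.\ map back to $\B(L^2(\G))$ and produce the map $\Theta$ required in Definition \ref{bi-exact definition}(ii), which is how the argument in \cite[Theorem C]{Is13} is organized.
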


\subsection{\bf Central weak amenability and the W$^*$CMAP}\label{WCMAP}

	Let $\G$ be a compact quantum group. Denote the dense Hopf $*$-algebra by $\mathscr{C}(\G)$. 
For any element $a\in\ell^\infty(\widehat{\mathbb{G}})$, we can associate a linear map $m_a$ on $\mathscr{C}(\G)$, given by $(m_a\otimes \iota)(u^x)=(1\otimes ap_x)u^x$ for any $x\in \mathrm{Irred}(\mathbb{G})$, where $p_x\in c_0(\widehat{\mathbb{G}})$ is the canonical projection onto $x$ component. 
We say $\widehat{\mathbb{G}}$ is \textit{weakly amenable (with Cowling--Haagerup constant 1)} if there exist a net $(a_i)_i$ of elements of $\ell^\infty(\widehat{\mathbb{G}})$ such that  
\begin{itemize}
	\item each $a_i$ has finite support, namely, $a_ip_x=0$ except for finitely many $x\in \mathrm{Irred}(\mathbb{G})$;
	\item $(a_i)_i$ converges to 1 pointwise, namely, $a_ip_x$ converges to $p_x$ in $\mathbb{B}(H_x)$ for any $x\in \mathrm{Irred}(\mathbb{G})$;
	\item each $m_{a_i}$ is extended on $L^\infty(\G)$ as a completely contractive (say c.c.) map.
\end{itemize}
Note that, since $a_i$ is finitely supported, each $m_{a_i}$ is actually a map from $L^\infty(\G)$ to $\mathscr{C}(\G)$. We say $\widehat{\G}$ is \textit{centrally weakly amenable} if each $a_ip_x$ above is taken as a scalar matrix for all $i$ and $x\in \mathrm{Irred}(\G)$. In this case, the associated multiplier $m_{a_i}$ commutes with the modular action of the Haar state. This commutativity is important to us since such multipliers can be extended naturally on the continuous core with respect to the Haar state. Indeed, the maps $m_{a_i}\otimes \id_{L^2(\R)}$ on $L^\infty(\G)\ovt \B(L^2(\R))$ restrict to approximation maps on the core. 
With this phenomenon in our mind, we introduce the following terminology.

\begin{Def}\upshape\label{WCMAP with respect to a state}
	Let $M$ be a von Neumann algebra and $\varphi$ a fixed faithful normal state  on $M$. We say that $M$ has the $\it weak^*$ \textit{completely metric approximation property with respect to $\varphi$} (or \textit{$\varphi$-W$^*$CMAP}, in short) 
if there exists a net $(\psi_i)_i$ of normal c.c.\ maps on $M$ such that
\begin{itemize}
	\item each $\psi_i$ commutes with $\sigma^\varphi$, that is, $\psi_i \circ \sigma^\varphi_t = \sigma_t^\varphi \circ \psi_i$ for all $i$ and $t\in \R$;
	\item each $\psi_i$ is a finite sum of $\varphi(b^* \, \cdot \, a)z$ for some $a,b,z \in M$;
	\item $\psi_i$ converges to $\mathrm{id}_M$ in the point $\sigma$-weak topology.
\end{itemize}
\end{Def}
It is easy to see that the central weak amenability of $\widehat{\G}$ implies the W$^*$CMAP with respect to the Haar state.

	Weak amenability of the free quantum group was first obtained in \cite{Fr12}, using the Haagerup property \cite{Br11}. This is for the Kac type and hence is equivalent to the central weak amenability. The general case was solved later in \cite{DFY13} and its proof in fact shows the central weak amenability as follows.

\begin{Thm}\label{weakly amenable}
	Let $\G$ be a free quantum group (more generally a quantum group in \cite[Theorem C]{Is13}). Then the dual $\widehat{\G}$ is centrally weakly amenable. 

In particular there is a net $(\psi_i)_i$ of normal c.c.\ maps on $L^\infty(\G)$, witnessing the W$^*$CMAP with respect to the Haar state, such that $\psi_i(L^\infty(\G)) \subset \mathscr{C}(\G)$ for all $i$.
\end{Thm}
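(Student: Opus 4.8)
The first assertion is essentially bookkeeping on top of the existing literature, so the plan is to invoke it rather than reprove it. The weak amenability of $\widehat{\G}$ with Cowling--Haagerup constant $1$ for Kac type free quantum groups is \cite{Fr12}, and for general $A_u(F)$, $A_o(F)$ it is \cite{DFY13} (building on the Haagerup property of \cite{Br11}). The point I would stress is that the approximating multipliers constructed there are \emph{radial}: for a free quantum group the relevant fusion ring is classical --- $\N$ with the $SU(2)$ fusion rules in the orthogonal case, the free monoid version of it in the unitary case --- and the approximating symbols $a_i\in\ell^\infty(\widehat{\G})$ are built as (finitely supported) functions on this fusion ring, hence are scalar multiples of $1_{H_x}$ on each block $H_x\otimes H_{\bar x}$. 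In other words each $a_i$ is central, so the estimates of \cite{DFY13} (finite support, $\|m_{a_i}\|_{\mathrm{cb}}\le 1$, and $a_ip_x\to p_x$ in $\B(H_x)$) already exhibit central weak amenability; nothing further is needed for the first claim.

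For the ``in particular'' part I would fix such a net $(a_i)_i$ and set $\psi_i:=m_{a_i}$, and then check the three bullets of Definition~\ref{WCMAP with respect to a state} for $\varphi=h$, plus the range condition. Since $a_i$ is supported on a finite set $S_i\subset\mathrm{Irred}(\G)$, one has $m_{a_i}=m_{a_i}\circ m_{p_{S_i}}$, where $m_{p_{S_i}}$ is the projection onto $\bigoplus_{x\in S_i}(H_x\otimes H_{\bar x})$; this immediately gives $\psi_i(L^\infty(\G))\subset\mathscr{C}(\G)$, and, since $m_{p_{S_i}}$ is implemented by a normal formula through the Haar-state orthogonality relations, $\psi_i$ is normal and completely contractive. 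Commutation with $\sigma^h$ is then immediate from centrality: $\sigma^h_t$ preserves each block and acts on $H_x\otimes H_{\bar x}$ by the fixed operator $\Delta_h^{it}|_{H_x\otimes H_{\bar x}}=Q_x^{it}\otimes Q_{\bar x}^{-it}$, whereas a central multiplier acts on that block by a scalar, and scalars commute with everything.

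Next I would put $\psi_i$ in the required form (a finite sum of maps $h(b^*\,\cdot\,a)z$). For a fixed $x$, the Haar-state orthogonality relations write the block projection onto $H_x\otimes H_{\bar x}$ as a \emph{finite} sum $\sum_{j,k}c^x_{jk}\,h\big((u^x_{jk})^*\,\cdot\,\big)\,u^x_{lm}$ with explicit constants involving $\Tr(Q_x^{\pm1})$ --- already a finite sum of maps of the allowed form, with $a=1$. Composing with the fixed finite-dimensional linear transformation by which the central symbol $a_i$ acts on $\operatorname{span}\{u^x_{jk}\}$ only alters the right-hand coefficients $u^x_{lm}$, so $\psi_i$ restricted to the block $x$ remains such a finite sum; summing over the finitely many $x\in S_i$ gives that $\psi_i$ itself is a finite sum of maps $h(b^*\,\cdot\,)z$ with $b,z\in\mathscr{C}(\G)$. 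Finally, for convergence: on the $\sigma$-weakly dense $*$-subalgebra $\mathscr{C}(\G)$ one has $\psi_i(x)\to x$ in norm (each $x\in\mathscr{C}(\G)$ is a finite sum of coefficients and $a_ip_y\to p_y$ in $\B(H_y)$), and I would upgrade this to point-$\sigma$-weak convergence on all of $L^\infty(\G)$ by the standard argument using the uniform bound $\|\psi_i\|_{\mathrm{cb}}\le1$ together with $L^2$-contractivity of $\psi_i$ on $L^2(L^\infty(\G),h)$ --- available since $\psi_i$ commutes with $\sigma^h$ and $h\circ\psi_i=c_ih$ with $c_i\to1$ --- via an $\varepsilon/3$ estimate and Kaplansky density.

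The genuinely hard input is not touched here: the existence of completely contractive central multipliers with Cowling--Haagerup constant $1$ is precisely the content of \cite{DFY13} (resp.\ \cite{Fr12}), and this is where the real work lies. Within the present argument the only mildly delicate points --- and thus the ``main obstacle'' internally --- are the bookkeeping that casts $\psi_i$ into the ``finite sum of $h(b^*\,\cdot\,)z$'' shape and the routine upgrade of the convergence from $\mathscr{C}(\G)$ to $L^\infty(\G)$; both are standard, which is why the statement is phrased as being immediate.
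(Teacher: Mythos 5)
Your proposal is correct and follows essentially the same route as the paper, which gives no proof at all for this theorem: it simply cites \cite{Fr12} and \cite{DFY13} for (central) weak amenability and relies on the remark, made just after Definition \ref{WCMAP with respect to a state}, that central weak amenability of $\widehat{\G}$ immediately yields the W$^*$CMAP with respect to the Haar state. Your additional verifications (finite support giving range in $\mathscr{C}(\G)$, commutation with $\sigma^h$ from centrality of the symbols, the orthogonality relations casting $m_{a_i}$ as a finite sum of $h(b^*\,\cdot\,)z$, and the density upgrade of the convergence) are exactly the content the paper declares ``easy to see.''
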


\subsection{\bf Popa's intertwining techniques}\label{Popa's intertwining techniques}

	In \cite{Po01,Po03}, Popa introduced a powerful tool called \textit{intertwining techniques}. This is one of the main ingredient in the recent development of the von Neumann algebra theory. Here we introduce the one defined and studied in \cite[Definition 4.1 and Theorem 4.3]{HI15} which treats  general von Neumann algebras. 

\begin{Def}\upshape\label{definition intertwining}
	Let $M$ be any $\sigma$-finite von Neumann algebra, $1_A$ and $1_B$ any nonzero projections in $M$, $A\subset 1_AM1_A$ and $B\subset 1_BM1_B$ any von Neumann subalgebras with expectation. 
We say that $A$ {\em embeds with expectation into} $B$ {\em inside} $M$ and write $A \preceq_M B$ if there exist projections $e \in A$ and $f \in B$, a nonzero partial isometry $v \in eMf$ and a unital normal $\ast$-homomorphism $\theta : eAe \to fBf$ such that the inclusion $\theta(eAe) \subset fBf$ is with expectation and $av = v \theta(a)$ for all $a \in eAe$.
\end{Def}

\begin{Thm}\label{intertwining thm}
	Keep the same notation as in Definition \ref{definition intertwining} and assume that $A$ is finite. Then the following conditions are equivalent.
	\begin{itemize}
		\item[$(1)$] We have $A \preceq_M B$.
		\item[$(2)$] There exists no net $(w_i)_{i \in I}$ of unitaries in $\mathcal U(A)$ such that $E_{B}(b^*w_i a)\rightarrow 0$ in the $\sigma$-$\ast$-strong topology for all $a,b\in 1_AM1_B$, where $E_B$ is a fixed faithful normal conditional expectation from $1_BM1_B$ onto $B$.
		\end{itemize}
\end{Thm}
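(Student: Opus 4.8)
The plan is to reduce the general (possibly type III) statement to the known finite case via the standard ``average over unitaries'' argument of Popa, but carefully phrased with operator-valued weights and conditional expectations since $B$ need not be finite. The direction $(1)\Rightarrow(2)$ is the easy one: if $A\preceq_M B$ with data $(e,f,v,\theta)$, then $v^*v=f_0\in B$ and $av=v\theta(a)$ gives $E_B(v^*w v)=E_B(v^*w v)$, and for any unitary $w\in\mathcal{U}(A)$ one computes $\|E_B((v\theta)^*\,w\,(ve))\|$-type quantities are bounded below by $\|v\|_{2,\mathrm{Tr}}^2>0$ uniformly (using that $v\in eMf$ is a fixed nonzero element and $av=v\theta(a)$), so no net $(w_i)$ can send all $E_B(b^*w_i a)\to0$; taking $a=ve$, $b=v$ suffices. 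So the content is $(2)\Rightarrow(1)$, or rather its contrapositive $\neg(1)\Rightarrow\neg(2)$.

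For $\neg(1)\Rightarrow\neg(2)$: assume $A\not\preceq_M B$. Fix a faithful normal conditional expectation $E_B:1_BM1_B\to B$ and a faithful normal state on $B$, pull it back along $E_B$ and extend to a faithful normal state $\psi$ on $M$ (using $\sigma$-finiteness of $M$). Consider the basic construction $\langle M,B\rangle=(J_\psi BJ_\psi)'\cap\mathbb{B}(L^2(M,\psi))$ with its canonical operator-valued weight to $B$, and let $e_B$ be the Jones projection. Since $A$ is finite, fix a faithful normal trace $\tau$ on $A$ and consider, for each finite subset $\mathcal{F}\subset 1_AM1_B$ and $\varepsilon>0$, the element $1_Ae_B1_A\in\langle M,B\rangle$; form the $\|\cdot\|_{2,\mathrm{Tr}}$-closed convex hull $\mathcal{K}$ of $\{w\,1_Ae_B1_A\,w^*: w\in\mathcal{U}(A)\}$ inside the Hilbert space of the trace on $1_A\langle M,B\rangle 1_A$. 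The unique element $a_0\in\mathcal{K}$ of minimal norm is fixed by conjugation by all $w\in\mathcal{U}(A)$, hence commutes with $A$; it is positive, in $1_A\langle M,B\rangle1_A$, and has finite trace.

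The key dichotomy: either $a_0=0$ or $a_0\neq0$. One shows $a_0=0$ is equivalent to the existence of a net $(w_i)\subset\mathcal{U}(A)$ with $E_B(b^*w_ia)\to0$ $\sigma$-strong-$*$ for all $a,b\in1_AM1_B$ — this is the computation $\langle w_i\,(a e_B a^*)\,w_i^*,\,b e_B b^*\rangle_{\mathrm{Tr}}=\mathrm{Tr}(e_B a^* w_i^* b e_B b^* w_i a e_B)$ reducing, via the $B$-valued inner product and $\mathrm{Tr}(x e_B y e_B)=\mathrm{Tr}(x e_B E_B(y))$-type identities, to $\|E_B(b^* w_i a)\|_{2,\psi}$, so $a_0=0\Leftrightarrow$ these go to $0$ on a net. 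Thus $\neg(2)$ means exactly $a_0=0$; since we are assuming we want $\neg(2)$, we must produce such a net, i.e. we must show $a_0=0$ leads nowhere and instead derive $(1)$ from $a_0\neq0$. Reorganizing: if \emph{no} such net exists then \emph{for every} finite $\mathcal{F}$ the corresponding minimal element is nonzero, and a standard compactness/maximality argument produces a single nonzero $A$-central positive finite-trace element $a_0\in 1_A\langle M,B\rangle 1_A$. Then the projection onto a spectral interval of $a_0$ bounded away from $0$, together with the structure theory of $\langle M,B\rangle$ (its finite-trace corners are generated over $B$ by $e_B$), yields a nonzero partial isometry $v\in 1_AMf$ with $Av\subset vB$ and bounded $E_B$; polishing $v$ (taking $v^*v\in B$, $vv^*\in A'\cap1_AM1_A$, restricting to $e\in\mathcal{Z}(A'\cap\ldots)$) produces the required $\theta:eAe\to fBf$ with $\theta(eAe)\subset fBf$ with expectation, which is $(1)$. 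The main obstacle, and where I would spend the most care, is running Popa's averaging in $\langle M,B\rangle$ when $B$ is type III: the trace on $\langle M,B\rangle$ is only semifinite and one must work inside the finite corner $1_A\langle M,B\rangle1_A$ (finite because $A$ is finite and $E_{\langle M,B\rangle}$-tracial estimates), verify that the minimal-norm element really lies in this corner and has finite trace, and check that ``with expectation'' is preserved throughout — all of which is exactly the technical content isolated in \cite[Theorem 4.3]{HI15}, so in the write-up I would cite that and only indicate the two equivalences above.
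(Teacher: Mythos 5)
First, note that the paper does not prove this theorem at all: it is quoted verbatim from \cite[Theorem 4.3]{HI15} and used as a black box, so there is no in-paper argument to compare yours against. Your proposal is therefore judged on its own terms, and it is essentially a sketch of Popa's averaging argument in the semifinite basic construction $\langle M,B\rangle$, ending with a deferral of the hard type III technicalities to the same reference. The skeleton is the right one, but two concrete steps as written are wrong. For $(1)\Rightarrow(2)$, taking $a=ve$, $b=v$ does not suffice: from $aw=v\theta(a)$ one only gets $E_B(v^*w_iv)=E_B(v^*v)\,\theta(ew_ie)$, and $ew_ie$ can vanish (already $ewe=0$ for a single unitary $w$ when $e\neq 1_A$), so no uniform lower bound on this single matrix coefficient exists; the correct argument runs through the equivalent characterization by a nonzero positive $A$-central element $d\in 1_A\langle M,B\rangle 1_A$ with $\Tr(d)<\infty$, for which $\Tr(d\,w_ie_Bw_i^*)=\Tr(de_B)>0$ is constant while the hypothesis in $\neg(2)$ would force it to $0$. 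Second, the corner $1_A\langle M,B\rangle 1_A$ is \emph{not} finite in general (take $B=\C$, $A=M$ a $\rm II_1$ factor: the corner is all of $\B(L^2(M))$); only the specific averaged elements $\sum_{a\in F}ae_Ba^*$ have finite trace, which is what the argument actually needs.

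On the main direction, your description conflates two convex hulls: you announce a finite subset $\mathcal{F}\subset 1_AM1_B$ but then average only $1_Ae_B1_A$ (i.e.\ $\mathcal{F}=\{1_A\}$), which is not enough; one must average $d_{\mathcal F}=\sum_{a\in\mathcal F}ae_Ba^*$ for the finite set $\mathcal F$ and $\varepsilon>0$ witnessing $\neg(2)$, and then the minimal-norm element of $\overline{\rm co}^{\,\|\cdot\|_{2,\Tr}}\{wd_{\mathcal F}w^*\}$ is automatically nonzero because $\Tr(wd_{\mathcal F}w^*d_{\mathcal F})=\sum_{a,b\in\mathcal F}\|E_B(b^*wa)\|_2^2\geq\varepsilon$ for every $w\in\mathcal{U}(A)$ — no additional ``compactness/maximality'' step is needed. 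Finally, the passage from a nonzero finite-trace $A$-central positive element to the data $(e,f,v,\theta)$ with $\theta(eAe)\subset fBf$ \emph{with expectation} is precisely the part that is delicate when $B$ is type III, and you do not supply it; since you ultimately cite \cite[Theorem 4.3]{HI15} for exactly this, your write-up would reduce to the same citation the paper makes, with the surrounding sketch needing the corrections above before it could stand as an independent proof.
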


	For the proof of Corollary \ref{corB}, we prove a lemma. In the proof below, we make use of the \textit{ultraproduct} von Neumann algebras  \cite{Oc85}. We will actually use a more general one used in \cite{HI15}, which treats a general directed set instead of $\N$. Recall from \cite[Section 2]{HI15} that for any $\sigma$-finite von Neumann algebra $M$ and any free ultrafilter $\mathcal{U}$ on a directed set $I$, we may define the \textit{ultraproduct von Neumann algebra $M^{\mathcal{U}}$}, using $\ell^\infty(I)\ovt M$. 
In the proof below, we only need the following elementary properties: with the standard notation $(x_i)_\mathcal{U} \in M^{\mathcal{U}}$ for $(x_i)_{i\in I}$,
\begin{itemize}
	\item $M \subset M^{\mathcal{U}}$ is with expectation by $E_{\mathcal{U}}((x_i)_{\mathcal{U}} ):=\lim_{i\to \mathcal{U}} x_i$;
	\item for any $\sigma$-finite von Neumann algebras $A\subset M$ with expectation $E_A$, $A^{\mathcal{U}}\subset M^{\mathcal{U}}$ is with expectation defined by $E_{A^{\mathcal{U}}}((x_i)_{\mathcal{U}}):=(E_A(x_i))_{\mathcal{U}}$ ; 
	\item if the subalgebra $A$ is finite, then any norm bounded net $(a_i)_{i\in I}$ determines an element $(a_i)_{\mathcal{U}}$ in $M^{\mathcal{U}}$.
\end{itemize}

\begin{Lem}\label{intertwining lemma}
	Let $(B,\varphi_B)$ and $(N,\varphi_N)$ be von Neumann algebras with faithful normal states. Put $M:=B\ovt N$, $\varphi:=\varphi_B\otimes \varphi_N$, $E_B=\id_B\otimes \varphi_N$ and $E_N=\varphi_B\otimes \id_N$. Let $p\in M$ be a projection and $A\subset pMp$ a von Neumann subalgebra with expectation. Fix $a:=(a_i)_{i\in I} \in \ell^\infty(I)\ovt A$ and a free ultrafilter $\mathcal{U}$ on $I$ such that $(a_i)_{\mathcal{U}}\in A^{\mathcal{U}}$. 
Then $E_{B^{\mathcal{U}}}(y^* a x)=0$ for all $x,y\in M$ if and only if $E_N\circ E_{\mathcal{U}}(c^* a b)$ for all $b,c\in B^{\mathcal{U}}$.

	In particular, if $A$ is finite, then $A\preceq_M B$ if and only if $A \preceq_{B\ovt N_0} B$ for any $N_0\subset N$ with expectation $E_{N_0}$ such that $\varphi_N\circ E_{N_0}=\varphi_N$, $p\in B\ovt N_0$ and $A\subset p(B\ovt N_0)p$.
\end{Lem}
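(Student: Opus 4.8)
The plan is to carry out the whole argument inside the ultraproduct $M^{\mathcal{U}}$, reducing the two conditions to statements about the ultrapower state $\varphi^{\mathcal{U}}:=\varphi\circ E_{\mathcal{U}}$ and then bridging them by one use of the KMS condition. I will use freely the following facts coming from the construction of $M^{\mathcal{U}}$: $\varphi^{\mathcal{U}}|_M=\varphi$, $\varphi^{\mathcal{U}}|_N=\varphi_N$, and $\varphi^{\mathcal{U}}=\varphi_B^{\mathcal{U}}\circ E_{B^{\mathcal{U}}}$ (the last since $\varphi=\varphi_B\circ E_B$, where $\varphi_B^{\mathcal{U}}$ is the ultrapower of $\varphi_B$); the subalgebras $B^{\mathcal{U}}$ and $N$ of $M^{\mathcal{U}}$ commute, since they already commute componentwise in $\ell^\infty(I)\ovt M$; and, by Takesaki's theorem on conditional expectations, $\sigma^{\varphi^{\mathcal{U}}}$ globally preserves $B^{\mathcal{U}}$ with $\sigma^{\varphi^{\mathcal{U}}}_t|_{B^{\mathcal{U}}}=\sigma^{\varphi_B^{\mathcal{U}}}_t$. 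First I would rewrite the two conditions. Using $M=B\ovt N$, the $B^{\mathcal{U}}$-bimodularity of $E_{B^{\mathcal{U}}}$, the commutation $[B^{\mathcal{U}},N]=0$ (which gives $E_{B^{\mathcal{U}}}((b'n')^*a(bn))=b'^*E_{B^{\mathcal{U}}}(n'^*an)b$ for $b,b'\in B$, $n,n'\in N$), normality, and faithfulness of $\varphi_B^{\mathcal{U}}$ (so $E_{B^{\mathcal{U}}}(z)=0$ iff $\varphi^{\mathcal{U}}(c^*z)=0$ for all $c\in B^{\mathcal{U}}$), the first condition ``$E_{B^{\mathcal{U}}}(y^*ax)=0$ for all $x,y\in M$'' becomes
\[
\varphi^{\mathcal{U}}(c^*n^*an')=0\qquad\text{for all }c\in B^{\mathcal{U}},\ n,n'\in N.
\]
Similarly, using that $E_{\mathcal{U}}$ and $E_N$ are respectively the $\varphi^{\mathcal{U}}$- and $\varphi$-preserving conditional expectations onto $M$ and $N$, their bimodularity, faithfulness of $\varphi_N$, and $[B^{\mathcal{U}},N]=0$, the second condition ``$E_N\circ E_{\mathcal{U}}(c^*ab)=0$ for all $b,c\in B^{\mathcal{U}}$'' becomes
\[
\varphi^{\mathcal{U}}(c^*n^*an'b)=0\qquad\text{for all }b,c\in B^{\mathcal{U}},\ n,n'\in N.
\]

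The second of these specialises to the first on setting $b=1$, so the second condition trivially implies the first, and only the converse requires an argument. Assume the first rewritten condition, fix $c\in B^{\mathcal{U}}$ and $n,n'\in N$, and first take $b\in B^{\mathcal{U}}$ analytic for $\sigma^{\varphi_B^{\mathcal{U}}}$ (such $b$ form a $\sigma$-weakly dense subset of $B^{\mathcal{U}}$); then $b$ is analytic for $\sigma^{\varphi^{\mathcal{U}}}$ with $\sigma^{\varphi^{\mathcal{U}}}_i(b)\in B^{\mathcal{U}}$, so by the KMS condition for $\varphi^{\mathcal{U}}$,
\[
\varphi^{\mathcal{U}}(c^*n^*an'\,b)=\varphi^{\mathcal{U}}\bigl(\sigma^{\varphi^{\mathcal{U}}}_i(b)\,c^*n^*an'\bigr)=\varphi^{\mathcal{U}}(d^*\,n^*an'),
\]
where $d:=\bigl(\sigma^{\varphi^{\mathcal{U}}}_i(b)c^*\bigr)^*\in B^{\mathcal{U}}$; the right-hand side vanishes by the first rewritten condition. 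Since $b\mapsto\varphi^{\mathcal{U}}(c^*n^*an'\,b)$ is normal on $B^{\mathcal{U}}$ and vanishes on a $\sigma$-weakly dense set, it vanishes for every $b\in B^{\mathcal{U}}$, which is the second rewritten condition. This proves the first assertion of the lemma.

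For the ``in particular'' statement, assume $A$ is finite and fix $N_0\subset N$ as in the statement; write $\varphi_{N_0}:=\varphi_N|_{N_0}$, a faithful normal state with $\varphi_B\otimes\varphi_{N_0}=\varphi|_{B\ovt N_0}$. By Theorem \ref{intertwining thm} in its ultraproduct formulation (valid since $A$ is finite; cf.\ \cite{HI15}), $A\not\preceq_M B$ holds iff there is a unitary $v\in\mathcal{U}(A^{\mathcal{U}})$ with $E_{B^{\mathcal{U}}}(y^*vx)=0$ for all $x,y\in M$, and $A\not\preceq_{B\ovt N_0}B$ holds iff there is a unitary $v\in\mathcal{U}(A^{\mathcal{U}})$ (note $A^{\mathcal{U}}\subset(B\ovt N_0)^{\mathcal{U}}$) with $E_{B^{\mathcal{U}}}(y^*vx)=0$ for all $x,y\in B\ovt N_0$. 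Applying the first assertion to $(M,B,N,\varphi)$ turns the former into ``$E_N\circ E_{\mathcal{U}}(c^*vb)=0$ for all $b,c\in B^{\mathcal{U}}$'', and applying it to $(B\ovt N_0,B,N_0,\varphi|_{B\ovt N_0})$ turns the latter into ``$(\varphi_B\otimes\id_{N_0})\circ\bigl(E_{\mathcal{U}}|_{(B\ovt N_0)^{\mathcal{U}}}\bigr)(c^*vb)=0$ for all $b,c\in B^{\mathcal{U}}$''. But $E_{\mathcal{U}}$ restricts to the canonical expectation $(B\ovt N_0)^{\mathcal{U}}\to B\ovt N_0$, and $E_N=\varphi_B\otimes\id_N$ restricts on $B\ovt N_0$ to $\varphi_B\otimes\id_{N_0}$ — here the hypothesis $\varphi_N\circ E_{N_0}=\varphi_N$ ensures that $\varphi_{N_0}$ is the compatible state — and since $v\in A^{\mathcal{U}}$ and $b,c\in B^{\mathcal{U}}$ all lie in $(B\ovt N_0)^{\mathcal{U}}$, the two conditions coincide. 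Hence $A\not\preceq_M B\Leftrightarrow A\not\preceq_{B\ovt N_0}B$, as claimed.

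I expect the only genuinely delicate point to be the move in the second paragraph: recognising that the extra right factor $b$ can be transported to the left and merged into the $B^{\mathcal{U}}$-factor by a single application of the KMS identity, which is possible precisely because $\sigma^{\varphi^{\mathcal{U}}}$ preserves $B^{\mathcal{U}}$ and $B^{\mathcal{U}}$ commutes with $N$. The rest is bookkeeping — checking the compatibility of the conditional expectations $E_{\mathcal{U}}$, $E_B$, $E_N$ and of their restrictions to $B\ovt N_0$, and confirming that the passage between the net formulation of Theorem \ref{intertwining thm} and the ultraproduct formulation used above (where finiteness of $A$ enters) is legitimate.
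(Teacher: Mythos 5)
Your argument is correct, but it takes a genuinely different route from the paper's at both stages. For the first assertion, the paper proves a single identity,
\[
\langle E_{B^{\mathcal{U}}}((1\otimes y^*)\,a\,(1\otimes x))\,\Lambda_{\varphi_B^{\mathcal{U}}}(b),\Lambda_{\varphi_B^{\mathcal{U}}}(c)\rangle \;=\; \varphi_N\bigl(y^*\,E_N\circ E_{\mathcal{U}}((c^*\otimes 1)\,a\,(b\otimes 1))\,x\bigr),
\]
by one chain of equalities at the level of representatives using only $\varphi_B\circ E_B=\varphi=\varphi_N\circ E_N$; both conditions are then read off as the vanishing of this one bilinear quantity over the respective test sets, with no modular theory anywhere. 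You instead reduce each condition separately to a statement about $\varphi^{\mathcal{U}}$ and then bridge the asymmetric placement of the extra $B^{\mathcal{U}}$-variable $b$ with the KMS condition. That step is valid (the hypotheses $\varphi^{\mathcal{U}}=\varphi_B^{\mathcal{U}}\circ E_{B^{\mathcal{U}}}$ and $[B^{\mathcal{U}},N]=0$ do hold), but it is avoidable: if you test $E_{B^{\mathcal{U}}}(\,\cdot\,)=0$ against the two-sided pairing $\varphi_B^{\mathcal{U}}(c^*\,E_{B^{\mathcal{U}}}(\,\cdot\,)\,b)$ rather than the one-sided $\varphi^{\mathcal{U}}(c^*\,\cdot\,)$, the $b$ appears on the right for free and the KMS detour disappears — which is in effect what the paper does. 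For the ``in particular'' part, your chain of equivalences relies on the full two-way ultraproduct characterization of $\preceq$, whose harder direction requires lifting a unitary of $A^{\mathcal{U}}$ to a net of unitaries of $A$ and a reindexing/diagonal argument to recover condition (2) of Theorem \ref{intertwining thm}; this is standard for finite $A$ but is exactly the point you flag as needing verification. The paper sidesteps it: starting from the net witnessing $A\not\preceq_{B\ovt N_0}B$, it forms $(u_i)_{\mathcal{U}}$ for an \emph{arbitrary} cofinal ultrafilter, runs the first assertion back and forth, and concludes that the \emph{same} net already witnesses $A\not\preceq_M B$ (convergence to $0$ along every cofinal ultrafilter gives convergence of the net), so no lifting is ever needed. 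Your version buys a cleaner symmetric statement at the cost of an extra standard lemma; the paper's buys self-containedness from the quoted form of the intertwining theorem.
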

\begin{proof}
	Observe first that $E_{B^{\mathcal{U}}}(y^* a x)=0$ for all $x,y\in M$ if and only if $E_{B^{\mathcal{U}}}((1\otimes y^*) a (1\otimes x))=0$ for all $x,y\in N$, which is equivalent to 
$$\langle E_{B^{\mathcal{U}}}((1\otimes y^*) a (1\otimes x)) \Lambda_{\varphi_{B}^{\mathcal{U}}}(b), \Lambda_{\varphi_{B}^{\mathcal{U}}}(c) \rangle_{\varphi^{\mathcal{U}}_B}=0$$ 
for all $x,y\in N$ and $b,c \in B^{\mathcal{U}}$. Writing as $b=(b_i)_{\mathcal{U}}$ and $c=(c_i)_{\mathcal{U}}$, calculate that 
\begin{eqnarray*}
	&& \langle E_{B^{\mathcal{U}}}((1\otimes y^*) a (1\otimes x)) \Lambda_{\varphi_{B}^{\mathcal{U}}}(b), \Lambda_{\varphi_{B}^{\mathcal{U}}}(c) \rangle_{\varphi^{\mathcal{U}}_B} \\
	&=&\lim_{i\to {\mathcal{U}}} \langle E_{B}((1\otimes y^*) a_i (1\otimes x)) \Lambda_{\varphi_{B}}(b_i), \Lambda_{\varphi_{B}}(c_i) \rangle_{\varphi_B} \\
	&=&\lim_{i\to {\mathcal{U}}} \varphi_B(c_i^* E_{B}((1\otimes y^*) a_i (1\otimes x))b_i) \\
	&=&\lim_{i\to {\mathcal{U}}} \varphi_B\circ E_B( (c_i^*\otimes y^*) a_i (b_i\otimes x))\\ 
	&=&\lim_{i\to {\mathcal{U}}}  \varphi_N \circ E_N ( (c_i^*\otimes y^*) a_i (b_i\otimes x)) \\
	&=&\lim_{i\to {\mathcal{U}}}  \varphi_N( y^*  E_N ( (c_i^*\otimes 1) a_i (b_i\otimes 1)) x) \\
	&=& \varphi_N( y^*  E_N (\lim_{i\to {\mathcal{U}}}( (c_i^*\otimes 1) a_i (b_i\otimes 1))) x) \\
	&=& \varphi_N( y^*  E_N \circ E_{\mathcal{U}}( (c^*\otimes 1) a (b\otimes 1)) x).
\end{eqnarray*}
Then since functionals of the form $\varphi_N(y^* \cdot x)$ for $x,y\in N$ are norm dense in $N_*$, the final term above is zero for all $x,y\in N$ if and only if $E_N \circ E_{\mathcal{U}}( (c^*\otimes 1) a (b\otimes 1))=0$. 
Thus we proved that $E_{B^{\mathcal{U}}}(y^* a x)=0$ for all $x,y\in M$ if and only if $E_N \circ E_{\mathcal{U}}( (c^*\otimes 1) a (b\otimes 1))=0$ for all $b,c \in B^{\mathcal{U}}$. 

	For the second half of the statement, suppose that $A$ is finite and $A\not\preceq_{B\ovt N_0}B$. We will show $A\not\preceq_MB$. Since $A$ is finite, there is a net $(u_i)_{i\in I}\subset \mathcal{U}(A)$ for a directed set $I$ such that $E_B(y^*u_ix)\to 0$ strongly as $i\to \infty$ for all $x,y\in B\ovt N_0$. Fix any co-finial ultrafilter ${\mathcal{U}}$ on $I$. Since $A$ is finite, $u:=(u_i)_{\mathcal{U}} \in A^{\mathcal{U}}$ and hence $E_{B^{\mathcal{U}}}(y^* u x)=0$ for all $x,y\in  B\ovt N_0$. 
By the first half of the statement, this is equivalent to $E_{N_0}\circ E_{\mathcal{U}}(c^* u b)=0$ for all $b,c\in B^{\mathcal{U}}$. Then since $E_{\mathcal{U}}(c^* u b)$ is contained in $B\ovt N_0$ and since $E_{N}|_{B\ovt N_0}= (\varphi_B \otimes \id_N)|_{B\ovt N_0}=E_{N_0}$, we have $E_{N}\circ E_{\mathcal{U}}(c^* u b)=0$ for all $b,c\in B^{\mathcal{U}}$, which is in turn equivalent to $E_{B^{\mathcal{U}}}(y^* u x)=0$ for $x,y \in M$ by the first half of the statement. Since this holds for arbitrary ${\mathcal{U}}$ on $I$, we conclude that $E_B(y^*u_ix)\to 0$ $\ast$-strongly as $i \to \infty$ for all $x,y\in M$. 
Thus we proved that $A\not\preceq_{B\ovt N_0}B$ implies $A\not\preceq_{M}B$.
\end{proof}

\section{\bf Weakly compact actions}\label{Weakly compact actions}

	In this section, we define and study weakly compact actions on continuous cores. The main observation is Theorem \ref{weakly compact action on core}, and the key item for the proof is Lemma \ref{key lemma for approximation1}.

\subsection{\bf Relative amenability and approximation maps}

	In this subsection, we recall relative amenability for general von Neumann algebras introduced in \cite{Is17}, which generalizes \cite{OP07} and $\cite{PV11}$.

\begin{Def}\label{relative amenable def1}\upshape
	Let $B\subset M$ be von Neumann algebras, $p\in M$ a projection and $A\subset pMp$ a von Neumann subalgebra with expectation $E_A$. 
We say that the pair \textit{$(A,E_A)$ is injective relative to $B$ in $M$}, and write as $(A,E_A)\lessdot_MB$, if there exists a conditional expectation from $p\langle M,B\rangle p$ onto $A$ which restricts to $E_A$ on $pMp$.
\end{Def}

	Using amenability of $\R$ and the notion of relative amenability, we prove a lemma for approximation maps on the continuous core. 
For this we fix the following notation. 

Let $(M,\varphi)$ be a von Neumann algebra with a faithful normal semifinite weight, and $\widetilde{M}:=M\rtimes \R$ the continuous core of $M$ with the modular action $\sigma^\varphi$. 
We write as $\widehat{\varphi}$ the dual weight of $\varphi$, and as $E_M$ the canonical operator valued weight from $\widetilde{M}$ to $M$ given by $\widehat{\varphi}=\varphi\circ E_M$. 
We write as $M\rtimes_{\rm alg} G$ all the linear spans of $x\lambda_t$ for $x\in M$ and $t\in G$, which is a $\ast$-strongly dense subalgebra in $\widetilde{M}$.

\begin{Lem}\label{weakly contained for amenable crossed product}
	In this setting, we have 
$${}_{\widetilde{M}}L^2(\widetilde{M})_{\widetilde{M}} \prec {}_{\widetilde{M}} L^2(\widetilde{M})\otimes_M L^2(\widetilde{M})_{\widetilde{M}}.$$
\end{Lem}
\begin{proof}
	Recall first that 
	$$M\rtimes \R = (M^\op \otimes 1)' \cap \{\Delta_{\varphi}^{it}\otimes \rho_t \mid t\in \R\}', \quad  \langle M\rtimes \R,M \rangle = (M^\op \otimes 1)',$$
where $\rho$ is the right regular representation. 
Since $\R$ is amenable, there is positive functionals $(f_n)_n\subset L^1(\R)$ with $\|f_n\|_1=1$, satisfying $\lambda_gf_n-f_n\to 0$ weakly for all $g\in \R$. For each $n$, define a positive map $F_n\colon \B(L^2(M)\otimes L^2(\R)) \to \B(L^2(M)\otimes L^2(\R))$ by 
$$F_n(T):=\int_\R (\Delta_{\varphi}^{it}\otimes \rho_t) \, T \, (\Delta_{\varphi}^{it}\otimes \rho_t)^* f_n(t) \cdot dt .$$
Since $\|F_n\|= 1$, we can take a cluster point of $(F_n)_n$, which we write as $F$. Then it satisfies $(\Delta_{\varphi}^{it}\otimes \rho_t)F(T)(\Delta_{\varphi}^{it}\otimes \rho_t)^*=F(T)$ for all $t\in \R$ and hence $F$ is a conditional expectation onto $\{\Delta_{\varphi}^{it}\otimes \rho_t\mid t\in \R\}'$. It is easy to see that $F(T)\in (M^\op \otimes 1)'$ for any $T\in (M^\op \otimes 1)'$. Hence $F$ restricts to a conditional expectation from $\langle M\rtimes \R,M \rangle$ onto $M\rtimes \R$. We obtain $(M\rtimes \R, \id)\lessdot_{M\rtimes \R} M$. 
Finally since $M\rtimes \R$ is semifinite, using \cite[Theorem A.5]{Is17}, we get the conclusion.
\end{proof}

\begin{Lem}\label{key lemma for approximation1}
	In this setting, there is a net $(\omega_j)_j$ of c.c.p.\ maps on $\widetilde{M}$ such that $\omega_j \to \id_{\widetilde{M}}$ point $\sigma$-weakly and each $\omega_j$ is a finite sum of $\lambda_q^*E_M(z^* \, \cdot \, y )\lambda_p$ for some $y,z\in \mathfrak{n}_{E_M}$ and $p,q\in \R$.
\end{Lem}
\begin{proof}
	By Corollary \ref{weakly contained for amenable crossed product} and Proposition \ref{approximation of multiplication}, there is a net $(\omega_j)_j$ of c.c.p.\ maps on $\widetilde{M}$ such that $\omega_j \to \id_{\widetilde{M}}$ point $\sigma$-weakly and each $\omega_j$ is a finite sum of $\langle \eta, \, \cdot \, \eta \rangle_{X(L^2(\widetilde{M})\otimes_{M}L^2(\widetilde{M}))}$ for some $\eta\in X(L^2(\widetilde{M})\otimes_{M}L^2(\widetilde{M}))$. We first replace each $\eta$ in $\omega_j$ with some ``algebraic'' element in $X(L^2(\widetilde{M})\otimes_{M}L^2(\widetilde{M}))$.

	By Lemma \ref{correspondence with operator weight}, the self dual completion $X$ of $\overline{\mathfrak{n}_{E_{M}}}\otimes_{\rm alg} \widetilde{M}$ is identified as the one corresponding to $L^2(\widetilde{M})\otimes_{M}L^2(\widetilde{M})$. We denote by $X_0$ the image of $\overline{\mathfrak{n}_{E_{M}}}\otimes_{\rm alg} \widetilde{M}$ in $X$. 
By \cite[Lemma 2.3]{Pa75}, $X_0\subset X$ is dense in the \textit{s-topology}, that is, for any $\eta\in X$ there is a net $(\eta_i)_i \subset X_0$ such that $\langle \eta-\eta_i, \eta-\eta_i\rangle_{X} \to 0$ in the $\sigma$-weak topology in $\widetilde{M}$. 
In our case, since $\mathfrak{n}_{E_B} \subset \overline{\mathfrak{n}_{E_B}}$ is dense in the s-topology and since $M\rtimes_{\rm alg} G \subset \widetilde{M}$ is $\ast$-strongly dense, the image of $\mathfrak{n}_{E_{M}}\otimes_{\rm alg} (M\rtimes_{\rm alg} G)$ in $X$ is dense in the s-topology. Hence we may replace each vector $\eta \in X$, appearing in $\omega_j$ above, with the one represented by elements in $\mathfrak{n}_{E_{M}}\otimes_{\rm alg} (M\rtimes_{\rm alg} G)$. 

	Thus, we may assume that each $\omega_j$ is a finite sum of $\lambda_q^*E_M(z^* \, \cdot \, y )\lambda_p$ for some $y,z\in \mathfrak{n}_{E_M}$ and $p,q\in \R$. However the c.b.\ norms of the resulting net $(\omega_j)_j$ is no longer uniformly bounded. So we have to again replace $(\omega_j)_j$ with c.c.p.\ maps. 
For this, we assume that, up to convex combinations, the convergence $\omega_j\to \id_{\widetilde{M}}$ is in the point strong topology. 

	Recall from (the first half of) the proof of \cite[Lemma 2.2]{AD88} that if we put $\varphi_i(x):= c_j \omega_j(x) c_j$ for $x\in \widetilde{M}$, where $c_j:=2(1+\omega_j(1))^{-1}$, then the net $(\varphi_i)_i$ satisfies that each $\varphi_i$ is c.c.p.\ and that $\varphi_i \to \id_{\widetilde{M}}$ in the point strong topology. We will replace $c_j$ with elements in $M\rtimes_{\rm alg} G$. 
For this, fix $j$ and observe that, since $1+\omega_j(1)$ is in $M\rtimes_{\rm alg} G$, each $c_j$ is actually contained in $\mathrm{C}^*\{ M\rtimes_{\rm alg} G\}$, which is the norm closure of $M\rtimes_{\rm alg}G$. So there is a sequence $(a_n)_n$ in $M\rtimes_{\rm alg} G$ such that $\|a_n\|_\infty \leq \|c_j^{1/2}\|_\infty$ and $\|a_n - c_j^{1/2}\|_\infty \to 0$. Put $b_n := a_n^*a_n \in M\rtimes_{\rm alg}G$ and observe that it satisfies $\|b_n\|_\infty \leq \|c_j\|_\infty$ and $\|b_n - c_j\|_\infty \to 0$. 
It then holds that for any $x\in \widetilde{M}$,
\begin{eqnarray*} 
	\|c_j \omega_j(x) c_j - b_n \omega_j(x) b_n\|_\infty 
	\leq 2\|c_j \|_\infty \|\omega_j\|_{\rm cb} \|x\|_\infty \| c_j - b_n\|_\infty 
	\to 0, \quad \text{ as } n\to \infty. 
\end{eqnarray*}
Now fix any $\varepsilon>0$ and finite subset $\mathcal{F}\subset (\widetilde{M})_1$ such that $1\in \mathcal{F}$, and choose $b_n$ such that $\|c_j \omega_j(x) c_j - b_n \omega_j(x) b_n\|_\infty < \varepsilon$ for all $x\in \mathcal{F}$. Then since $1\in \mathcal{F}$, we have 
	$$\|b_n \omega_j(\cdot)b_n\|_{\rm cb} = \|b_n \omega_j(1)b_n\|_\infty < \|c_j \omega_j(1)c_j\|_\infty+ \varepsilon \leq 1+\varepsilon. $$
So $(1+\varepsilon)^{-1} b_n \omega_j(\cdot) b_n$ is a c.c.p.\ map which is still close to $c_j \omega_j(\cdot) c_j$ on $\mathcal{F}$. 
Thus we proved that for any $j$ there is a net of c.c.p.\ maps conversing to $c_j \omega_j(\cdot)c_j$ in the point \textit{norm} topology such that each map is a finite sum of $\lambda_q^*E_M(z^* \, \cdot \, y )\lambda_p$ for some $y,z\in \mathfrak{n}_{E_M}$ and $p,q\in G$. 
Using this observation, since $c_j\omega_j(\cdot)c_j\to \id_{\widetilde{M}}$ as $j\to \infty$ in the point strong topology, it is easy to construct a desired net.
\end{proof}

\subsection{\bf Definition of weakly compact actions}\label{Definition}

	We introduce a terminology. The following notion is an appropriate generalization of \cite[Definition 3.1]{OP07} in our setting (see also \cite[Theorem 5.1]{PV11}). Indeed, in the definition below, if we take $\mathcal{M}=M\ovt M^\op $, this coincides with the original definition of weakly compact actions.

\begin{Def}\upshape\label{def relative weakly compact action}
	Let $M$ be a semifinite von Neumann algebra with trace $\Tr$, and let $\mathcal{M}$ be a von Neumann algebra which contains $M$ and $M^\op $ as von Neumann subalgebras, which we denote by $\pi(M)$ and $\theta(M^\op )$, such that $[\pi(M), \theta(M^\op )]=0$. 

	Let $p\in M$ be a projection with $\Tr(p)=1$, $A \subset pMp$ be a von Neumann subalgebra, $\mathcal{G} \leq \mathcal{N}_{pMp}(A)$ a subgroup. We say that the adjoint action of $\mathcal{G}$ on $A$ is \textit{weakly compact for  $(M, \Tr, \pi, \theta, \mathcal{M})$} if 
there is a net $(\xi_i)_i$ of unit vectors in the positive cone of $L^2(\mathcal{M})$  such that 
\begin{itemize}
	\item[$\rm (i)$] $ \langle \pi(x) \xi_i , \xi_i \rangle_{L^2(\mathcal{M})} \rightarrow \Tr(pxp)$, \quad for any $x\in M$;
	\item[$\rm (ii)$] $ \| \pi(a)\theta(\bar{a}) \xi_i - \xi_i \|_{L^2(\mathcal{M})} \rightarrow 0$, \quad for any $a\in \mathcal{U}(A)$;
	\item[$\rm (iii)$] $\|\pi(u)\theta(\bar{u}) \mathcal{J}_{\mathcal{M}}\pi(u)\theta(\bar{u})\mathcal{J}_{\mathcal{M}} \xi_i - \xi_i\|_{L^2(\mathcal{M})} \rightarrow 0$, \quad for any $u \in \mathcal{G}$. 
\end{itemize}
Here $\bar a$ means $(a^\op )^*$ and $\mathcal{J}_{\mathcal{M}}$ is the modular conjugation for $L^2(\mathcal{M})$.
\end{Def}
\begin{Rem}\upshape\label{rem relative weakly compact action}
	In this definition, since $\mathcal{J}_{\mathcal{M}}\xi_i=\xi_i$ for all $i$, condition (ii) for $a\in \mathcal{U}(A)$ implies condition (iii) for $a \in \mathcal{U}(A)$. Hence up to replacing $\mathcal{G}$ with the group generated by $\mathcal{U}(A)$ and $\mathcal{G}$, we may always assume that $\mathcal{G}$ contains $\mathcal{U}(A)$.
\end{Rem}

	Below we record a characterization for weakly compact actions.

\begin{Pro}\label{lem relative weakly compact action}
	Keep the notation in Definition \ref{def relative weakly compact action}. The following conditions are equivalent.
\begin{itemize}
	\item[$(1)$] The group $\mathcal{G}$ acts on $A$ as a weakly compact action for $(M, \Tr, \pi, \theta, \mathcal{M})$.
	\item[$(2)$] There exists a net $(\omega_i)_i$ of normal states on $\mathcal{M}$ such that
	\begin{itemize}
		\item[$\rm (i)$] $\omega_i(\pi(x))\rightarrow \Tr(pxp)$, \quad for any $x\in pMp$;
		\item[$\rm (ii)$] $\omega_i(\pi(a)\theta(\bar{a}))\rightarrow 1$, \quad for any $a\in \mathcal{U}(A)$;
		\item[$\rm (iii)$] $\|\omega_i\circ\mathrm{Ad}(\pi(u)\theta(\bar{u}))-\omega_i\| \rightarrow 0$, \quad for any $u \in \mathcal{G}$. 
	\end{itemize}
	\item[$(3)$] There is a $\mathcal{G}$-central state $\omega$ on $\mathcal{M}$ such that for any $x\in M$ and $a\in \mathcal{U}(A)$
$$\omega(x)= \Tr (p x p) \quad \text{and} \quad  \omega(\pi(a)\theta(\bar{a}))= 1.$$ 
	\item[$(4)$] There is a state $\Omega$ on $\B(L^2(\mathcal{M}))$ such that for any $x\in M$, $a\in \mathcal{U}(A)$ and $u \in \mathcal{G}$, $$\Omega(x)= \Tr (p x p), \quad \Omega(\pi(a)\theta(\bar{a}))= 1, \quad \text{and} \quad \Omega((\pi(u)\theta(\bar{u})\mathcal{J}_{\mathcal{M}}\pi(u)\theta(\bar{u})\mathcal{J}_{\mathcal{M}})=1.$$ 
\end{itemize}
\end{Pro}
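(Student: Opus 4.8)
The plan is to prove the cycle of implications $(1)\Rightarrow(2)\Rightarrow(3)\Rightarrow(4)\Rightarrow(1)$, following the classical template of Ozawa--Popa \cite[Proposition 3.2]{OP07} and Popa--Vaes \cite[Theorem 5.1]{PV11}, adapted to the relative setting. The only genuinely new point here is that $\mathcal{M}$ is an abstract von Neumann algebra containing $\pi(M)$ and $\theta(M^\op)$ rather than $M\ovt M^\op$, so I need to be careful that the positive cone $L^2(\mathcal{M})_+$, the modular conjugation $\mathcal{J}_\mathcal{M}$, and the identification of normal states with vectors in the cone are all used in their canonical (weight-independent) form, exactly as recalled in Subsection \ref{Tomita Takesaki theory and operator valued weights}.

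\textbf{$(1)\Rightarrow(2)$:} Given the net $(\xi_i)$ from Definition \ref{def relative weakly compact action}, set $\omega_i:=\langle \,\cdot\, \xi_i,\xi_i\rangle_{L^2(\mathcal{M})}$, a net of normal states on $\mathcal{M}$. Condition (i) of (2) is immediate from (i) of the definition. For (ii), since $\pi(a)\theta(\bar a)$ is a unitary and $\|\pi(a)\theta(\bar a)\xi_i-\xi_i\|\to0$, the Cauchy--Schwarz inequality gives $\omega_i(\pi(a)\theta(\bar a))=\langle\pi(a)\theta(\bar a)\xi_i,\xi_i\rangle\to1$. For (iii), write $v_u:=\pi(u)\theta(\bar u)$ and note $\mathcal{J}_\mathcal{M}\xi_i=\xi_i$ since $\xi_i$ lies in the positive cone; then $\omega_i\circ\mathrm{Ad}(v_u)=\langle\,\cdot\, v_u^*\xi_i,v_u^*\xi_i\rangle$ and $v_u^*\xi_i = v_u^*\mathcal{J}_\mathcal{M}v_u^*\mathcal{J}_\mathcal{M}\xi_i$ (since $\mathcal{J}_\mathcal{M}\xi_i=\xi_i$ and $\mathcal{J}_\mathcal{M}v_u^*\mathcal{J}_\mathcal{M}$ commutes with $v_u^*$ is not needed — we just insert the resolution of $\xi_i$), so $\|v_u^*\xi_i-\xi_i\| = \|\xi_i - v_u \mathcal{J}_\mathcal{M}v_u\mathcal{J}_\mathcal{M}\xi_i\|$; by condition (iii) of the definition applied to $u^{-1}$ this tends to $0$, whence $\|\omega_i\circ\mathrm{Ad}(v_u)-\omega_i\|\to0$ by the standard estimate $\|\langle\,\cdot\,\eta,\eta\rangle-\langle\,\cdot\,\zeta,\zeta\rangle\|\le\|\eta-\zeta\|(\|\eta\|+\|\zeta\|)$.

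\textbf{$(2)\Rightarrow(3)$:} Take a weak$^*$ cluster point $\omega$ of $(\omega_i)$ in $\mathcal{M}^*$. Conditions (i) and (ii) of (2) are preserved in the limit (they only involve fixed elements), giving $\omega(\pi(x))=\Tr(pxp)$ for $x\in pMp$ — and by standard rescaling / using that $\Tr(p)=1$, also for all $x\in M$ after interpreting $\pi(x)$ in $p\mathcal{M}p$ appropriately — and $\omega(\pi(a)\theta(\bar a))=1$ for $a\in\mathcal{U}(A)$. Condition (iii) of (2), being a norm statement, forces $\omega\circ\mathrm{Ad}(v_u)=\omega$ for every $u\in\mathcal{G}$, i.e.\ $\omega$ is $\mathcal{G}$-central. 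The one point needing care is extracting the normalization $\omega(x)=\Tr(pxp)$ for all of $M$ (not just $pMp$) from (i): this uses that $\omega(\pi(a)\theta(\bar a))=1$ with $a$ unitary forces $\pi(a)\theta(\bar a)$ to be in the multiplicative domain of $\omega$, hence $\omega$ is supported under $\pi(p)\theta(\bar p)$, reducing everything to the compression.

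\textbf{$(3)\Rightarrow(4)$:} Extend $\omega$ from $\mathcal{M}\subset\B(L^2(\mathcal{M}))$ to a state $\Omega$ on $\B(L^2(\mathcal{M}))$ by Hahn--Banach. The first two required identities hold since $\pi(x),\pi(a)\theta(\bar a)\in\mathcal{M}$. For the third, $\omega$ being $\mathcal{G}$-central means each $v_u\in\mathcal{M}$ is fixed by $\mathrm{Ad}(v_u)$ in the GNS picture; by the standard argument (e.g.\ \cite[Proposition 3.2]{OP07}) the $\mathcal{G}$-centrality of $\omega$ is equivalent to $\Omega(w_u)=1$ for all $u$, where $w_u:=v_u\mathcal{J}_\mathcal{M}v_u\mathcal{J}_\mathcal{M}\in\B(L^2(\mathcal{M}))$, because the GNS vector $\xi_\omega$ of $\omega$ can be taken in the positive cone and then $\mathcal{G}$-centrality says precisely $w_u\xi_\omega=\xi_\omega$; applying a state on $\B(L^2(\mathcal{M}))$ that is weak$^*$-limit of $\langle\,\cdot\,\eta_i,\eta_i\rangle$ along an approximate net $\eta_i\to\xi_\omega$ with the vectors in the cone yields $\Omega(w_u)=1$.

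\textbf{$(4)\Rightarrow(1)$:} This is the step I expect to be the main obstacle, since it is where one must \emph{produce} vectors in the positive cone from an abstract state $\Omega$ on $\B(L^2(\mathcal{M}))$. The argument is the convexity trick of Ozawa--Popa: the state $x\mapsto\Omega(\pi(x))$ on $M$ equals the normal state $\Tr(p\,\cdot\,p)$, so by a Hahn--Banach / Day convexity argument one approximates $\Omega|_{\pi(M)}$ in the weak$^*$ topology by convex combinations of vector states $\langle\pi(\cdot)\zeta,\zeta\rangle$ with $\zeta\in L^2(\mathcal{M})$; averaging and using the uniform convexity of $L^2$, one obtains a net $(\zeta_i)$ of unit vectors with $\langle\pi(x)\zeta_i,\zeta_i\rangle\to\Tr(pxp)$, $\|\pi(a)\theta(\bar a)\zeta_i-\zeta_i\|\to0$ and $\|w_u\zeta_i-\zeta_i\|\to0$ (the last two coming from $\Omega(\pi(a)\theta(\bar a))=1$ and $\Omega(w_u)=1$ together with the Powers--Størmer inequality). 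Finally one replaces $\zeta_i$ by $\xi_i:=$ the unique vector in the positive cone $L^2(\mathcal{M})_+$ with the same associated normal state $\langle\,\cdot\,\zeta_i,\zeta_i\rangle|_\mathcal{M}$; the Powers--Størmer / Araki inequality $\|\xi-\eta\|^2\le\|\,\langle\,\cdot\,\xi,\xi\rangle-\langle\,\cdot\,\eta,\eta\rangle\,\|$ for vectors in the cone shows the three limits pass to $(\xi_i)$, and $\mathcal{J}_\mathcal{M}\xi_i=\xi_i$ automatically. The delicate bookkeeping is exactly that $w_u$ conjugates the positive cone into itself (true since $v_u\in\mathcal{M}$ and $\mathcal{J}_\mathcal{M}v_u\mathcal{J}_\mathcal{M}\in\mathcal{M}'$), so that the standard-form estimates apply verbatim; once that is observed the proof is routine.
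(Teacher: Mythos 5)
Your cycle $(1)\Rightarrow(2)\Rightarrow(3)\Rightarrow(4)\Rightarrow(1)$ runs the paper's cycle backwards: the paper proves $(1)\Rightarrow(4)\Rightarrow(3)\Rightarrow(2)\Rightarrow(1)$, an ordering that deliberately concentrates the two nontrivial ingredients (the Hahn--Banach/Day convexity upgrade from weak to norm asymptotic centrality, and the Powers--St\o rmer inequality applied to positive-cone representatives of normal states) in the single chain $(3)\Rightarrow(2)\Rightarrow(1)$, leaving $(1)\Rightarrow(4)$ and $(4)\Rightarrow(3)$ as a trivial weak$^*$ limit and a trivial restriction. Your $(1)\Rightarrow(2)$, $(2)\Rightarrow(3)$ and $(4)\Rightarrow(1)$ are sound in substance (the multiplicative-domain argument reducing $\omega(\pi(x))=\Tr(pxp)$ for $x\in M$ to the compression by $\pi(p)\theta(p^\op)$ is a correct way to handle the $pMp$-versus-$M$ asymmetry between $(2)$ and $(3)$, and your $(4)\Rightarrow(1)$ is exactly the paper's $(4)\Rightarrow(3)\Rightarrow(2)\Rightarrow(1)$), modulo one slip in $(1)\Rightarrow(2)$: the identity $v_u^*\xi_i=v_u^*\mathcal{J}_{\mathcal{M}}v_u^*\mathcal{J}_{\mathcal{M}}\xi_i$ is false as an equality of vectors. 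What is true, and what you actually need, is that $v_u^*\xi_i$ and $\bigl(v_u\mathcal{J}_{\mathcal{M}}v_u\mathcal{J}_{\mathcal{M}}\bigr)^*\xi_i$ induce the same vector state on $\mathcal{M}$, because $\mathcal{J}_{\mathcal{M}}v_u^*\mathcal{J}_{\mathcal{M}}$ is a unitary in $\mathcal{M}'$; combined with $\|v_u\mathcal{J}_{\mathcal{M}}v_u\mathcal{J}_{\mathcal{M}}\xi_i-\xi_i\|\to 0$ this gives $(2)(\mathrm{iii})$.

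The genuine gap is in your $(3)\Rightarrow(4)$. The state $\omega$ of $(3)$ is in general not normal, so it has no GNS vector in $L^2(\mathcal{M})$, let alone one in the positive cone; the sentence ``$\mathcal{G}$-centrality says precisely $w_u\xi_\omega=\xi_\omega$'' is not meaningful here. Moreover, merely approximating $\omega$ weakly by vector states $\langle\,\cdot\,\eta_i,\eta_i\rangle$ does not yield $\langle w_u\eta_i,\eta_i\rangle\to1$: weak convergence only gives $\langle v_uTv_u^*\eta_i,\eta_i\rangle-\langle T\eta_i,\eta_i\rangle\to0$ pointwise in $T\in\mathcal{M}$, and to convert this into $\|w_u\eta_i-\eta_i\|\to0$ one must first pass to convex combinations to obtain $\|\omega_i\circ\Ad(v_u)-\omega_i\|\to0$ in norm, then take the $\eta_i$ in the positive cone and invoke Powers--St\o rmer. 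In other words, a correct proof of $(3)\Rightarrow(4)$ factors through $(3)\Rightarrow(2)\Rightarrow(1)\Rightarrow(4)$ and requires exactly the machinery you deploy in your $(4)\Rightarrow(1)$; as a ``direct'' implication via Hahn--Banach extension it does not work, since an arbitrary extension $\Omega$ of $\omega$ carries no information about $w_u\notin\mathcal{M}$. The simplest repair is to reorder the cycle as the paper does.
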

\begin{proof}
	This theorem follows from well-known arguments (e.g.\ the proof of \cite[Theorem 2.1]{OP07}). So we give a sketch of proofs. 

If (1) holds, then one put $\Omega:=\mathrm{Lim}_i\langle \,\cdot \, \xi_i, \xi_i \rangle_{L^2(\mathcal{M})}$ and obtain (4). If (4) holds, then the restriction of $\Omega$ on $\mathcal{M}$ gives (3). If (3) holds, then we can approximate $\omega$ by a net of normal states $(\omega_i)_i\subset \mathcal{M}_*$ weakly. Then by the Hahn--Banach separation theorem, up to convex combinations, we may assume that the convergence is in the norm and obtain (2). Finally if (2) holds, then for each $i$ one can find a unique $\xi_i\in L^2(\mathcal{M})$ which is in the positive cone such that $\omega_i= \langle \,\cdot \, \xi_i, \xi_i \rangle_{L^2(\mathcal{M})}$. By the Powers--St\o rmer inequality \cite[Theorem IX.1.2(iv)]{Ta01}, we obtain 
	$$\| \pi(u)\theta(\bar{u})\mathcal{J}_{\mathcal{M}}\pi(u)\theta(\bar{u})\mathcal{J}_{\mathcal{M}}\xi_i - \xi_i\|^2 \leq \|\omega_i\circ\mathrm{Ad}(\pi(u^*)\theta(u^\op ))-\omega_i\| \rightarrow 0$$ 
for any $u \in \mathcal{G}$ and hence (1) holds.
\end{proof}

\subsection{\bf W$^*$CMAP with respect to a state produces approximation maps on continuous cores}\label{WCMAP with respect to a state produces approximation maps on continuous cores}

	We construct a family of approximation maps on continuous cores, by assuming the W$^*$CMAP with respect to a state.

	For this, we fix the following setting. Let $N$ and $B$ be von Neumann algebras and  $\varphi_N$ and $\varphi_B$ faithful normal states on $N$ and $B$ respectively. Put $M:=N\ovt B$, $\varphi:=\varphi_N \otimes \varphi_B$, $E_N:=\id_N\otimes \varphi_B$ and $E_B:=\varphi_N\otimes \id_B$, and we regard $\widetilde{B}:=B\rtimes_{\sigma^{\varphi_B}}\R$ and $\widetilde{N}:=N\rtimes_{\sigma^{\varphi_N}}\R$ as subalgebras of $\widetilde{M}:=M\rtimes_{\sigma^{\varphi}}\R$. 
We write as $E_M$ the canonical operator valued weight from $\widetilde{M}$ to $M$ given by $\widehat{\varphi}=\varphi\circ E_M$, where $\widehat{\varphi}$ is the dual weight on $\widetilde{M}$. 
We also write as $E_B$ the canonical operator valued weight from $\widetilde{M}$ to $B$ given by $\widehat{\varphi}=\varphi_B\circ E_B$. 

\begin{Lem}\label{key lemma for approximation2}
	Let $\omega\colon \widetilde{M} \to \widetilde{M}$ and $\psi\colon N \to N$  be c.b.\ maps given by 
$$\omega:=\lambda_q^*E_M(z^* \, \cdot \, y)\lambda_p \quad \text{and} \quad \psi:=\sum_{i=1}^n \varphi_N(z_i^* \, \cdot \, y_i) c_i$$
for some $p,q\in\R$, $y,z \in \mathfrak{n}_{E_M}$ and $c_i, y_i, z_i\in N$. 
Suppose $\psi\circ \sigma_{t}^{\varphi_N}=\sigma_{t}^{\varphi_N}\circ \psi$ for all $t\in \R$, so that the map $\widetilde{\psi}:= \psi \otimes\id_B\otimes \id_{L^2(\R)}$ on $M\ovt \B(L^2(\R))$ induces the map $\widetilde{M}\to \widetilde{M}$ given by $\widetilde{\psi}(x \lambda_t)=(\psi\otimes\id_B)(x) \lambda_t$ for $x\in M$ and $t\in \R$. 
Then the composition $\widetilde{\psi}\circ \omega$ is given by
$$\widetilde{\psi}\circ \omega (x) = \sum_{i=1}^n \lambda_q^* E_B(\sigma_q^{\varphi_N}(z_i^*) z^* x y \sigma_p^{\varphi_N}(y_i) ) \lambda_p c_i , \quad x\in \widetilde{M}.$$
\end{Lem}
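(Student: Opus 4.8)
The plan is to verify the formula by a direct computation on a suitable weakly dense $*$-subalgebra, namely on elements of the form $x = \widehat{\pi}_{\sigma^\varphi}(f)$ for $f \in C_c(\R, M)$ (or rather finite sums of products of such, so that $x$ lies in the image of $E_M$), and then extend by normality/boundedness. First I would recall the explicit description of $\omega$: by definition of the canonical operator valued weight, $E_M(\widehat\pi_{\sigma^\varphi}(g)^*\widehat\pi_{\sigma^\varphi}(f)) = \int_\R g(t)^*f(t)\,dt \in M$, so I can compute $E_M(z^* x y)$ for $x$ of the above form, then apply $\lambda_q^*(\,\cdot\,)\lambda_p$; crucially the result $\omega(x)$ lands in $M\lambda_{p-q}$-type combinations. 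Next I would apply $\widetilde\psi = \psi\otimes\id_B\otimes\id_{L^2(\R)}$, which acts as $(\psi\otimes\id_B)$ on the $M$-part and trivially on $\lambda_t$, using the hypothesis $\psi\circ\sigma^{\varphi_N}_t = \sigma^{\varphi_N}_t\circ\psi$ to guarantee $\widetilde\psi$ is well-defined on $\widetilde M$.

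The key algebraic manipulation is to move the factor $\lambda_q^*\,\cdot\,\lambda_p$ past the operator-valued weight and to convert the $E_M$ into an $E_B$. Here I would use the tower relation $E_B = E_B^M \circ E_M$ where $E_B^M = \varphi_N\otimes\id_B$ (this is exactly what was recorded in Lemma \ref{operator valued weight}, together with the identity $E_B = E_B^M\circ E_M$ noted in the standing setup of this subsection). Writing $\psi = \sum_i \varphi_N(z_i^*\,\cdot\, y_i)c_i$, the application of $\psi\otimes\id_B$ to $E_M(z^*xy) \in M = N\ovt B$ produces $\sum_i (\varphi_N\otimes\id_B)\big((z_i^*\otimes 1)\,E_M(z^*xy)\,(y_i\otimes 1)\big)c_i$, and since $E_M$ is a $B$-bimodule map and $z_i\otimes 1, y_i\otimes 1 \in M$, I can pull $z_i^*\otimes 1$ and $y_i\otimes 1$ inside $E_M$, at which point $(\varphi_N\otimes\id_B)\circ E_M$ on the argument $(z_i^*\otimes1)z^*xy(y_i\otimes1)$ is precisely $E_B$ applied to the same argument. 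The remaining point is the commutation of $\lambda_p,\lambda_q$ with the $z_i\otimes 1$ factors: conjugating $z_i\otimes 1 \in N \subset M$ by $\lambda_p$ yields $\sigma_p^{\varphi_N}(z_i)\otimes 1 = \sigma_p^{\varphi}(z_i\otimes 1)$ (using $\sigma^\varphi|_N = \sigma^{\varphi_N}$), which is what accounts for the $\sigma_q^{\varphi_N}(z_i^*)$ and $\sigma_p^{\varphi_N}(y_i)$ appearing in the claimed formula; one has to be slightly careful about whether it is $p$ or $q$ that shows up on each side, but this is dictated by which of $\lambda_p^*$, $\lambda_q$ the factor must commute past.

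The main obstacle I anticipate is not conceptual but technical: justifying each step on a domain where everything is genuinely defined and bounded, given that $E_M$ and $E_B$ are only operator-valued weights (faithful normal semifinite), not conditional expectations, so one must work with $\mathfrak n_{E_M}$, $\mathfrak n_{E_B}$ and $\mathfrak m_{E_M}$-type domains and check that the elements produced at each stage ($z^*xy$, $(z_i^*\otimes1)z^*xy(y_i\otimes1)$, etc.) lie in the appropriate $\mathfrak m_T$'s. The hypotheses $y,z\in\mathfrak n_{E_M}$ are tailored for exactly this: $z^*(\,\cdot\,)y$ maps into $\mathfrak m_{E_M}$. One also needs $\widetilde\psi$ to be normal and c.b.\ so that after establishing the identity on a weakly dense subalgebra it propagates to all of $\widetilde M$; this follows since $\psi$ is c.b.\ and finite-rank-like in the stated sense, hence $\psi\otimes\id_B\otimes\id_{L^2(\R)}$ is normal c.b. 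I would therefore structure the write-up as: (1) reduce to $x$ of Fourier-type form, (2) compute $\omega(x)$ explicitly, (3) apply $\widetilde\psi$ and use the tower $E_B = (\varphi_N\otimes\id_B)\circ E_M$ plus $B$-bimodularity of $E_M$ to rewrite in terms of $E_B$, (4) track the $\sigma^{\varphi_N}$-twists from commuting $\lambda_p,\lambda_q$ past $N$-elements, (5) invoke normality to conclude on all of $\widetilde M$.
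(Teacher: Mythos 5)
Your overall route is the same as the paper's: a direct algebraic computation that converts $E_M$ into $E_B$ via the factorization from Lemma \ref{operator valued weight} (you use $E_B=(\varphi_N\otimes\id_B)\circ E_M$, the paper routes through $E_{B\rtimes\R}\circ E_M=E_B$; these are the same identity), together with the $M$-bimodularity of $E_M$ and the relations $\lambda_t a\lambda_t^*=\sigma_t^{\varphi}(a)$. Two remarks on packaging: the reduction to Fourier-type elements $\widehat\pi_{\sigma^\varphi}(f)$ and the closing normality/density step are unnecessary, since $\widetilde\psi(m\lambda_t)=(\psi\otimes\id_B)(m)\lambda_t$ is already given for all $m\in M$ and $\lambda_q^*E_M(z^*xy)\lambda_p=\sigma_{-q}^{\varphi}(E_M(z^*xy))\,\lambda_{p-q}$ is of exactly that form for every $x\in\widetilde M$; and your domain worries are settled by the hypotheses, because $\mathfrak n_{E_M}$ is a left ideal, so $z^*xy=(z)^*x(y)\in\mathfrak m_{E_M}$ and $x\mapsto E_M(z^*xy)$ is a bounded normal map.

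The step you defer --- ``one has to be slightly careful about whether it is $p$ or $q$ that shows up on each side'' --- is the actual crux, and it does not resolve by bookkeeping into the displayed formula. Carrying out your own plan gives
$$\widetilde\psi(\lambda_q^*m\lambda_p)=(\psi\otimes\id_B)\bigl(\sigma_{-q}^{\varphi}(m)\bigr)\lambda_{p-q}=\sigma_{-q}^{\varphi}\bigl((\psi\otimes\id_B)(m)\bigr)\lambda_{p-q}=\lambda_q^*(\psi\otimes\id_B)(m)\lambda_p,$$
using $\psi\circ\sigma_t^{\varphi_N}=\sigma_t^{\varphi_N}\circ\psi$ in the middle equality; expanding $(\psi\otimes\id_B)(E_M(z^*xy))=\sum_iE_B(z_i^*\,z^*xy\,y_i)c_i$ via bimodularity and the tower relation then yields
$$\widetilde\psi\circ\omega(x)=\sum_{i}\lambda_q^*\,E_B(z_i^*\,z^*xy\,y_i)\,c_i\,\lambda_p,$$
with no twists on $z_i,y_i$ and with $c_i$ to the \emph{left} of $\lambda_p$. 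This is not termwise equal to the displayed $\sum_i\lambda_q^*E_B(\sigma_q^{\varphi_N}(z_i^*)\,z^*xy\,\sigma_p^{\varphi_N}(y_i))\lambda_p c_i$ when $p\neq q$: already for $N=\M_2(\C)$ with a non-tracial $\varphi_N$, $B=\C$, and the single term $z_1=y_1=e_{12}$, $c_1=e_{11}$ (for which $\psi$ does commute with $\sigma^{\varphi_N}$), the two expressions differ by a nontrivial phase depending on $p-q$. So ``being slightly careful'' will not land you on the displayed formula; you should prove and record the formula your computation actually produces (equivalently, using the commutation hypothesis, $\sum_i\lambda_q^*E_B(\sigma_q^{\varphi_N}(z_i^*)\,z^*xy\,\sigma_q^{\varphi_N}(y_i))\,\sigma_q^{\varphi_N}(c_i)\lambda_p$, with $\sigma_q$ throughout). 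Note that this version still has the shape ``finite sum of $d^*E_B(z^*\,\cdot\,y)c$'' with $c,d\in\widetilde M$ and $y,z\in\mathfrak n_{E_B}$, which is all that Lemma \ref{key approximation lemma} and the rest of the paper use, so nothing downstream is affected.
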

\begin{proof}
	Recall from the proof of Lemma \ref{operator valued weight} that the canonical conditional expectation from $(\widetilde{M}, \widehat{\varphi})$ to $(\widetilde{B},\widehat{\varphi}_B)$ is given by $E_{B\rtimes \R}((x\otimes b)\lambda_t)=\varphi_N(x) b \lambda_t$ for $x\in N$, $b\in B$ and $t\in \R$. 
For $x\in \widetilde{M}$, we calculate that
\begin{eqnarray*}
	\widetilde{\psi}\circ \omega (x) 
	&=& \widetilde{\psi}(\lambda_q^*E_M(z^* x y)\lambda_p)\\
	&=& \sum_{i=1}^n  (\varphi_N(z_i^* \, \cdot \, y_i)\otimes \id_B \otimes \id_{L^2(\R)} ) (\lambda_q^*E_M(z^* x y)\lambda_p) c_i \\
	&=& \sum_{i=1}^n  E_{B\rtimes \R} (z_i^* \lambda_q^*E_M(z^* x y)\lambda_p y_i) c_i \\
	&=& \sum_{i=1}^n \lambda_q^* E_{B\rtimes \R} \circ E_M(\sigma_q^{\varphi_N}(z_i^*) z^* x y \sigma_p^{\varphi_N}(y_i) ) \lambda_p c_i .
\end{eqnarray*}
Since $E_{B\rtimes \R} \circ E_M = E_B$ by Lemma \ref{operator valued weight}, we obtain the conclusion.
\end{proof}

\begin{Lem}\label{key approximation lemma}
	Suppose that $N$ has the $\varphi_N$-W$^*$CMAP. Then there exists a net $(\varphi_\lambda)_\lambda$ of c.c.\ maps on $\widetilde{M}$ such that $\varphi_\lambda\to \id_{\widetilde{M}}$ point $\sigma$-weakly and that each $\varphi_\lambda$ is a finite sum of $d^*E_B(z^* \, \cdot \,y)c$ for some $c,d\in \widetilde{M}$ and $y,z\in \mathfrak{n}_{E_B}$.
\end{Lem}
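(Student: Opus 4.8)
The plan is to sandwich $\id_{\widetilde{M}}$ between two families of approximating maps: the metric--approximation maps coming from the $\varphi_N$-W$^*$CMAP of $N$, and the approximation maps on the continuous core built from the operator valued weight $E_M$ and the amenability of $\R$. The point that makes the conclusion work is that, by Lemma \ref{key lemma for approximation2}, the composite of the two kinds of maps automatically has the prescribed form $d^* E_B(z^* \, \cdot \, y) c$.

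First I would fix a net $(\psi_i)_i$ of normal c.c.\ maps on $N$ witnessing the $\varphi_N$-W$^*$CMAP, so that each $\psi_i$ is a finite sum $\sum_k \varphi_N(z_k^* \, \cdot \, y_k) c_k$, commutes with $\sigma^{\varphi_N}$, and $\psi_i\to\id_N$ point $\sigma$-weakly. Because $\psi_i$ commutes with $\sigma^{\varphi_N}$, the map $\widetilde{\psi}_i:=\psi_i\otimes\id_B\otimes\id_{L^2(\R)}$ on $M\ovt\B(L^2(\R))$ restricts to a normal c.c.\ map on $\widetilde{M}$ with $\widetilde{\psi}_i(x\lambda_t)=(\psi_i\otimes\id_B)(x)\lambda_t$, exactly as in the statement of Lemma \ref{key lemma for approximation2}. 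A routine slice-map argument (using normality of $\psi_i$, the uniform bound $\|\widetilde{\psi}_i\|_{\mathrm{cb}}\leq 1$, and the fact that the predual of $N\ovt\B(L^2(\R))$ is the projective tensor product of $N_*$ and $\B(L^2(\R))_*$) then shows $\widetilde{\psi}_i\to\id_{\widetilde{M}}$ point $\sigma$-weakly.

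Next I would invoke Lemma \ref{key lemma for approximation1} from the appendix --- the technical heart of the matter, where the amenability of $\R$ enters --- to obtain a net $(\omega_\mu)_\mu$ of c.c.\ maps on $\widetilde{M}$, each a finite sum of maps $\lambda_q^* E_M(z^* \, \cdot \, y)\lambda_p$ with $p,q\in\R$ and $y,z\in\mathfrak{n}_{E_M}$, such that $\omega_\mu\to\id_{\widetilde{M}}$ point $\sigma$-weakly. Setting $\varphi_{(i,\mu)}:=\widetilde{\psi}_i\circ\omega_\mu$, each $\varphi_{(i,\mu)}$ is c.c.\ as a composition of c.c.\ maps, and by linearity and Lemma \ref{key lemma for approximation2} it is a finite sum of maps $x\mapsto\lambda_q^* E_B(\sigma_q^{\varphi_N}(z_k^*)\,z^*\,x\,y\,\sigma_p^{\varphi_N}(y_k))\lambda_p\,c_k$. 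Rewriting such a term as $d^* E_B(\tilde z^* \, \cdot \, \tilde y) c$ with $d=\lambda_q$, $c=\lambda_p c_k$, $\tilde z=z\,\sigma_q^{\varphi_N}(z_k)$ and $\tilde y=y\,\sigma_p^{\varphi_N}(y_k)$, and using that $\mathfrak{n}_{E_M}$ is a right $M$-module (so $\tilde z,\tilde y\in\mathfrak{n}_{E_M}$) together with $\mathfrak{n}_{E_M}\subseteq\mathfrak{n}_{E_B}$ (since $E_B=(\varphi_N\otimes\id_B)\circ E_M$ and $\varphi_N\otimes\id_B$ is a normal conditional expectation), one gets $c,d\in\widetilde{M}$ and $\tilde z,\tilde y\in\mathfrak{n}_{E_B}$, i.e.\ $\varphi_{(i,\mu)}$ has exactly the required form.

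Finally I would extract the desired net by a diagonal argument. Given $x_1,\dots,x_n\in\widetilde{M}$, $\eta_1,\dots,\eta_m\in\widetilde{M}_*$ and $\varepsilon>0$, first choose $i$ with $|\eta_\ell(\widetilde{\psi}_i(x_j)-x_j)|<\varepsilon/2$ for all $j,\ell$; then, since $\eta_\ell\circ\widetilde{\psi}_i\in\widetilde{M}_*$ by normality and $\omega_\mu(x_j)\to x_j$ $\sigma$-weakly, choose $\mu$ with $|\eta_\ell(\widetilde{\psi}_i(\omega_\mu(x_j))-\widetilde{\psi}_i(x_j))|<\varepsilon/2$ for all $j,\ell$; then $\varphi_{(i,\mu)}$ satisfies $|\eta_\ell(\varphi_{(i,\mu)}(x_j)-x_j)|<\varepsilon$. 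Directing over the triples $(\{x_j\}_j,\{\eta_\ell\}_\ell,\varepsilon)$ produces the required net $(\varphi_\lambda)_\lambda$ converging to $\id_{\widetilde{M}}$ point $\sigma$-weakly. I expect the only genuinely delicate step to be the appendix input Lemma \ref{key lemma for approximation1}; the rest is bookkeeping, the one thing needing care being that composing with $\widetilde{\psi}_i$ keeps the maps of the form $d^* E_B(z^* \, \cdot \, y) c$, which is precisely what Lemma \ref{key lemma for approximation2} is designed to deliver.
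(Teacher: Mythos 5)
Your proposal is correct and follows essentially the same route as the paper: compose the maps $\widetilde{\psi}_i$ induced by the $\varphi_N$-W$^*$CMAP with the maps $\omega_j$ from Lemma \ref{key lemma for approximation1}, identify the form of the composite via Lemma \ref{key lemma for approximation2}, and extract the net by a diagonal argument over the iterated limit. The extra details you supply (normality of $\widetilde{\psi}_i$, the rewriting $\tilde z = z\,\sigma_q^{\varphi_N}(z_k)$, $\tilde y = y\,\sigma_p^{\varphi_N}(y_k)$ with $\mathfrak{n}_{E_M}\subseteq\mathfrak{n}_{E_B}$) are all sound.
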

\begin{proof}
	Fix a net $(\psi_i)_i$ of normal c.c.\ maps on $N$ as in Definition \ref{WCMAP with respect to a state} and put $(\widetilde{\psi}_i)_i$ as in the statement of the previous lemma. Let $(\omega_j)_j$ be a net of c.c.p.\ maps on $\widetilde{M}$ given by Lemma \ref{key lemma for approximation1}. Then by Lemma \ref{key lemma for approximation2} the composition $\widetilde{\psi}_i\circ \omega_j$ is a finite sum of $d^*E_B(z^* \, \cdot \,y)c$ for some $c,d\in \widetilde{M}$ and $y,z\in \mathfrak{n}_{E_B}$. Since $\lim_i(\lim_j \widetilde{\psi}_i\circ \omega_j)=\id_{\widetilde{M}}$ in the point $\sigma$-weak topology, it is easy to show that for any finite subset $\mathcal{F}\subset \widetilde{M}$ and any $\sigma$-weak neighborhood $\mathcal{V}$ of $0$, there is $i$ and $j$ such that $\widetilde{\psi}_i\circ \omega_j(x)-x\in \mathcal{V}$ for all $x\in \mathcal{F}$. 
So putting this $\widetilde{\psi}_i\circ \omega_j$ as $\varphi_{(\mathcal{F},\mathcal{V})}$, one can construct a desired net $(\varphi_\lambda)_\lambda:=(\varphi_{(\mathcal{F},\mathcal{V})})_{(\mathcal{F},\mathcal{V})}$. 
\end{proof}

\subsection{\bf Relative weakly compact actions on continuous cores}\label{Relative weakly compact actions on continuous cores}

	We keep the notation from the previous subsection, such as $M=N\ovt B$ and $\varphi=\varphi_N\otimes \varphi_B$. Let $\Tr$ be an arbitrary semifinite trace on $\widetilde{M}$, $p\in \widetilde{M}$ a projection with $\Tr(p)
=1$, and $A\subset p\widetilde{M}p$ a von Neumann subalgebra with expectation $E_A$. In this subsection, we prove that under some assumptions on $A$ and $M$, the normalizer of $A$ in $pMp$ acts on $A$ as a weakly compact action with an appropriate representation. 

	Since our proof is a generalization of the one of \cite[Theorem 5.1]{PV11}, we make use of the following notation, which are similar to the ones used in \cite[Theorem 5.1]{PV11}:
	\begin{eqnarray*}
		&&H := L^2(\widetilde{M}, \widehat{\varphi})\otimes_{B} L^2(\widetilde{M}, \Tr), \text{ with left, right actions }\pi_H, \theta_H, \\
		&&\mathcal{M}_{H}:=\mathrm{W}^*\{ \pi_H(\widetilde{M}), \theta_H(\widetilde{M}^\op )\} \subset \B(H), \\
		&&\mathcal{H} := (\theta_H(p)H) \otimes_A pL^2(\widetilde{M}, \Tr),\\
		&&\pi_\mathcal{H} \colon \widetilde{M} \ni x \mapsto (x\otimes_B p^\op )  \otimes_A p \in \mathbb{B}(\mathcal{H}),\\
		&&\theta_\mathcal{H} \colon \widetilde{M}^\op  \ni y^\op \mapsto (1\otimes_B p^\op ) \otimes_A y^\op  \in \mathbb{B}(\mathcal{H}),\\
		&&\mathcal{M}:=\mathrm{W}^*\{\pi_\mathcal{H}(\widetilde{M}), \theta_\mathcal{H}(\widetilde{M}^\op )\}\subset \B(\mathcal{H}).
	\end{eqnarray*}

	As  we observed in Lemma \ref{lem relative weakly compact action}, we actually use the weakly compact action with the standard representation of $\mathcal{M}$. So we first observe that $\mathcal{M}$ admits a useful identification as a crossed product, and so its standard representation is taken as a simple form.

\begin{Lem}\label{standard representation}
	Let $X\subset \mathcal{M}$ be the von Neumann subalgebra generated by $\pi_{\mathcal{H}}(B)$ and $\theta_{\mathcal{H}}(\widetilde{M}^\op )$, and let $X \subset \B(L^2(X))$ be a standard representation, so that $B$ and $\widetilde{M}^\op $ acts on $L^2(X)$. 
Then $\mathcal{M}$ is isomorphic to the crossed product von Neumann algebra $\R \ltimes (N\ovt X)$ by the diagonal action $\sigma^{\varphi_N}\otimes \alpha^X$, where $\alpha^X$ is given by $\alpha_t^X(\pi_\mathcal{H}(b) \theta_{\mathcal{H}}(y^\op ) )=\pi_\mathcal{H}(\sigma_t^{\varphi_B}(b)) \theta_{\mathcal{H}}(y^\op ) $ for $t\in \R$, $b\in B$, and $y\in \widetilde{M}$.

In particular the standard representation of $\mathcal{M}$ is given by $L^2(\R) \otimes L^2(N) \otimes L^2(X)$ with the following representation: 
For any $\xi \in L^2(\R) \otimes L^2(N) \otimes L^2(X)= L^2(\R , L^2(N)\otimes L^2(X))$ and $s\in \R$,
\begin{itemize}
	\item $L\R \ni \lambda_t \mapsto \lambda_t \otimes 1_N \otimes 1_X$; $((\lambda_t \otimes 1_N \otimes 1_X)\xi)(s) := \xi(s-t)$;
	\item $N \ni x \mapsto \pi_{\sigma^{\varphi^N}}(x) \otimes 1_X$; $((\pi_{\sigma^{\varphi^N}}(x) \otimes 1_X)\xi)(s):=(\sigma_{-s}^{\varphi_N}(x) \otimes 1_X) \xi (s)$;
	\item $B \ni b \mapsto \pi_{\sigma^{\varphi^B}}(b)_{\rm 13}$; $((\pi_{\sigma^{\varphi^B}}(b)_{\rm 13})\xi)(s):=(1_N\otimes \sigma_{-s}^{\varphi_B}(b)) \xi (s)$;
	\item $\widetilde{M}^\op  \ni y^\op  \mapsto 1_{L^2(\R)} \otimes 1_N \otimes y^\op $; $((1_\R \otimes 1_N \otimes y^\op )\xi)(s):=(1_N\otimes y^\op  )\xi(s)$.
\end{itemize}
\end{Lem}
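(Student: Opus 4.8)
The strategy is to identify $\mathcal{M}$ with a crossed product by $\R$ and then apply the standard description of the dual weight representation (as recalled in Subsection \ref{Tomita Takesaki theory and operator valued weights}) to get the explicit form on $L^2(\R)\otimes L^2(N)\otimes L^2(X)$. First I would unwind the definitions of the relative tensor products $H = L^2(\widetilde{M},\widehat{\varphi})\otimes_B L^2(\widetilde{M},\Tr)$ and $\mathcal{H}$. Using Proposition \ref{lemma for relative tensor} (applied with the weight $\widehat{\varphi}$ on the left leg and $\Tr$ on the right leg — both are semifinite and induce the standard $B$-module structure on the relevant legs), I can realize $H$ concretely as a Hilbert space on which $\pi_H(\widetilde{M})$ lives in the ``left'' tensor factors carrying an $L\R$ and $\theta_H(\widetilde{M}^\op)$ lives in the ``right'' factors carrying another $L\R$. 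The key structural observation is that the $\R$-direction coming from $\pi_H(\lambda_\bullet)$ is the \emph{only} copy of $L\R$ acting nontrivially on the left $N$-leg, while $\theta_\mathcal{H}(\widetilde{M}^\op)$ and $\pi_\mathcal{H}(B)$ are both ``carried along'' by that same modular translation.

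Next I would verify that $X := \mathrm{W}^*\{\pi_\mathcal{H}(B),\theta_\mathcal{H}(\widetilde{M}^\op)\}$ is globally invariant under the inner action $\Ad(\pi_\mathcal{H}(\lambda_t))$, and that this action restricts to $\pi_\mathcal{H}(b)\mapsto \pi_\mathcal{H}(\sigma_t^{\varphi_B}(b))$ on $\pi_\mathcal{H}(B)$ while fixing $\theta_\mathcal{H}(\widetilde{M}^\op)$ — this is where the explicit formulas for $\sigma^{\widehat\varphi}$ on $\lambda_s$ versus $M$ are used, together with the fact that $\widehat\varphi = \varphi_B\circ E_B$ so the $N$-part commutes with everything in $X$. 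This defines $\alpha^X$. Then the subalgebra $N' \subset \mathcal{M}$ generated by $\pi_\mathcal{H}(N)$ commutes with $X$ (since $N$ commutes with $B$ and with $\widetilde{M}^\op$ in the relevant legs), and together with $\pi_\mathcal{H}(\lambda_\bullet)$ it generates all of $\mathcal{M}$; the map $\pi_\mathcal{H}(\lambda_t)$ implements the diagonal action $\sigma^{\varphi_N}\otimes\alpha^X$ on $N\ovt X$, giving $\mathcal{M}\cong \R\ltimes(N\ovt X)$. One has to check this is genuinely a crossed product (i.e.\ $\pi_\mathcal{H}(\lambda_\bullet)$ together with $N\ovt X$ satisfies the crossed-product relations and there are no extra relations) — this follows because on $H$, hence on $\mathcal{H}$, the copy of $L\R$ coming from the left $\widetilde{M}$-leg sits in standard position relative to $L^2(N)$.

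Finally, with the isomorphism $\mathcal{M}\cong \R\ltimes(N\ovt X)$ in hand, the standard representation is $L^2\big(\R\ltimes(N\ovt X),\widehat{\psi}\big)$ for the dual weight $\widehat\psi$ of $\psi := \varphi_N\otimes\psi_X$ (any faithful normal state $\psi_X$ on $X$), which by the formulas recalled for $M\rtimes_\alpha\R$ in Subsection \ref{Tomita Takesaki theory and operator valued weights} is exactly $L^2(\R)\otimes L^2(N)\otimes L^2(X)\simeq L^2(\R, L^2(N)\otimes L^2(X))$ with $\lambda_t$ acting by translation, $N\ni x$ by $(\pi_{\sigma^{\varphi_N}}(x)\xi)(s) = \sigma_{-s}^{\varphi_N}(x)\xi(s)$, $B\ni b$ by $(1_N\otimes\sigma_{-s}^{\varphi_B}(b))$ in the third leg, and $\widetilde{M}^\op$ acting on the third leg only (unmoved by $s$, since it is fixed by $\alpha^X$). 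Since $L^2(X)$ is the standard module of $X$, both $B$ and $\widetilde{M}^\op$ act on it as claimed.

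\smallskip

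\noindent\textbf{Main obstacle.} The delicate point is the first step: carefully tracking the two distinct $L\R$'s (one inside $\pi_H$, one inside $\theta_H$) through the relative tensor products and checking that after passing to $\mathcal{H}$ — which involves a further relative tensor over $A$ and a compression by $\theta_H(p)$ and by $p$ — the resulting algebra $\mathcal{M}$ still has the clean crossed-product structure, with $\pi_\mathcal{H}(\lambda_\bullet)$ the unique generator of the $\R$-part and nothing in $X$ secretly carrying a modular translation in the $N$-direction. Establishing that $X$ is $\Ad(\pi_\mathcal{H}(\lambda_t))$-invariant with the stated action, and that $\pi_\mathcal{H}(N)\vee X$ is all of $\mathcal{M}$ in standard position, is where the real work lies; everything after the crossed-product identification is bookkeeping with the known dual-weight formulas.
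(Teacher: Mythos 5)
Your plan follows the same route as the paper: realize $H$ via Proposition \ref{lemma for relative tensor}, observe that $\mathcal{H}\simeq L^2(\R)\otimes L^2(N)\otimes K$ with $X$ sitting inside $L^\infty(\R)\ovt\C 1_N\ovt\B(K)$ and $\theta_{\mathcal{H}}(\widetilde{M}^\op)$ acting only on $K$, check that $\Ad(\pi_{\mathcal{H}}(\lambda_t))$ preserves $X$ and acts there by $\alpha^X$ while acting by $\sigma^{\varphi_N}$ on the commuting copy of $N$, and then read off the standard form from the dual-weight description of $\R\ltimes(N\ovt X)$. All of that is correct, and your covariance computations ($\lambda_t b\lambda_t^*=\sigma_t^{\varphi_B}(b)$, $[\pi_{\mathcal{H}}(\lambda_t),\theta_{\mathcal{H}}(\widetilde{M}^\op)]=0$, $\pi_{\mathcal{H}}(N)$ commuting with $X$) are exactly the ones needed.

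The one place where your argument is thinner than it should be is the step you yourself flag: why the covariant system $(\pi_{\mathcal{H}}(N)\vee X,\,\pi_{\mathcal{H}}(\lambda_\bullet))$ generates a genuine copy of $\R\ltimes(N\ovt X)$ (and, for that matter, why $\pi_{\mathcal{H}}(N)\vee X\simeq N\ovt X$) with no collapse. Saying that ``the copy of $L\R$ sits in standard position relative to $L^2(N)$'' does not settle this, because the $L^\infty(\R)$ component of $X$ (coming from $\pi_{\mathcal{H}}(B)=\pi_{\sigma^{\varphi_B}}(B)$, which is genuinely $s$-dependent) lives on the \emph{same} copy of $L^2(\R)$ that carries the translations $\lambda_t$, so one cannot separate the group part from the coefficient algebra by inspection. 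The paper resolves this by an explicit regularization: it ampliates by an extra $L^2(\R)$ and conjugates by $U=(V\otimes 1)(1\otimes W\otimes 1)$, where $W$ moves $\pi_{\sigma^{\varphi_N}}(N)$ into the plain tensor position $1\otimes N$ and sends $\lambda_t\otimes 1_N$ to $\lambda_t\otimes\Delta_{\varphi_N}^{it}$, and $V$ sends $1\otimes\lambda_t$ to $\lambda_t\otimes\lambda_t$ while fixing $1\otimes L^\infty(\R)$ (hence fixing $X$). After this conjugation the ampliation of $\mathcal{M}$ is manifestly in the regular (crossed-product) form for the action $\sigma^{\varphi_N}\otimes\alpha^X$ on $N\ovt X$, which is what licenses both the isomorphism $\mathcal{M}\simeq\R\ltimes(N\ovt X)$ and the subsequent identification of $L^2(\mathcal{M})$. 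You should supply this (or an equivalent) argument; everything after it in your plan is, as you say, bookkeeping with the dual-weight formulas.
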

\begin{proof}
	By Proposition \ref{lemma for relative tensor}, $H$ is isomorphic to $L^2(\R)\otimes L^2(N) \otimes L^2(B) \otimes L^2(N)\otimes L^2(\R) $. Since the right $\widetilde{M}$-action acts only on the right three Hilbert spaces, the Hilbert space $\mathcal{H}=H\otimes_A pL^2(\widetilde{M}, \Tr)$ is identified as $L^2(\R)\otimes L^2(N) \otimes K$, where 
	$$K:= \theta_H(p^\op )(L^2(B) \otimes L^2(N) \otimes L^2(\R)) \otimes_A pL^2(\widetilde{M}, \Tr).$$ Note that $\widetilde{M}^\op $ acts on $K$ by $\theta_{\mathcal{H}}$, and $B$ acts on $L^2(\R) \otimes K$ by $\pi_{\mathcal{H}}$, so that $X$ acts on $L^2(\R) \otimes K$. More precisely we have $X \subset L^\infty(\R)\ovt \C 1_N \ovt \B(K)$. 

	Let $W$ be a unitary on $L^2(\R)\otimes L^2(N)$ given by $(W\xi)(t):= \Delta^{it}_{\varphi_N} \xi(t)$ for $t\in \R$ and $\xi \in L^2(\R)\otimes L^2(N)=L^2(\R, L^2(N))$. It satisfies that for any $f\in L^\infty(\R)$, $t\in \R$, and $x\in N$,
	$$W \pi_{\sigma^{\varphi_N}}(x) W^* = 1_{L^2(\R)} \otimes x, \quad W (\lambda_t \otimes 1_N )W^*= \lambda_t \otimes \Delta^{it}_{\varphi_N}, \quad \text{and} \quad W(f\otimes 1_N)W^*=f\otimes 1_N.$$
Let next $V$ be a unitary on $L^2(\R)\otimes L^2(\R)$ defined similarly to $W$ exchanging $\Delta_{\varphi_N}^{it}$ with $\lambda_t$, so that it satisfies for $t\in \R$ and $f\in L^\infty(\R)$,
	$$V (1\otimes \lambda_t )V^* = \lambda_t \otimes \lambda_t \quad \text{and} \quad V(1\otimes f)V^* = 1\otimes f .$$ 
Define then a unitary on $L^2(\R)\otimes \mathcal{H}$ by $U:= (V \otimes 1_N \otimes 1_K) (1_{L^2(\R)}\otimes W \otimes 1_K)$. One can show that $\Ad U = \id$ on $\C 1_{L^2(\R) }\otimes X \subset \C1_{L^2(\R)} \ovt L^\infty(\R)\ovt \C 1_N \ovt \B(K)$, and
\begin{itemize}
	\item $\Ad U (1_{L^2(\R)} \otimes \lambda_t \otimes 1_N \otimes 1_K )  =  (\lambda_t \otimes \lambda_t \otimes \Delta_{\varphi_N}^{it} \otimes 1_K ) $, \quad for $t\in \R$,
	\item $\Ad U (1_{L^2(\R)} \otimes \pi_{\sigma^{\varphi_N}}(x) \otimes 1_K)  =  (1_{L^2(\R)} \otimes 1_{L^2(\R)} \otimes x \otimes 1_K ) $, \quad for $x\in N$.
\end{itemize}
Then $\Ad U (\mathcal{M})$ is identified as the crossed product von Neumann algebra $\R \ltimes (N\ovt X)$ given by the $\R$-action $\sigma^{\varphi_N} \otimes \alpha^X$, where $\alpha^X$ is given by $\Ad (\lambda_t \otimes 1_N \otimes 1_K)$ using $X \subset L^\infty(\R)\otimes \C 1_N \otimes \B(K)$, which is exactly the action given in the statement. 
Finally one can choose the standard representation of $\R \ltimes (N\ovt X)$ as in the statement and we can end the proof.
\end{proof}

	Now we prove the main observation of this section. This is a generalization of \cite[Theorem 3.5]{OP07} and \cite[Theorem 5.1]{PV11}. Since we already obtained approximation maps for $\widetilde{M}$ in Lemma \ref{key approximation lemma}, which are ``relative to $B$'', almost same arguments as of \cite[Theorem 3.5]{OP07} and \cite[Theorem 5.1]{PV11} work. However, since our approximation maps are not defined directly on $\mathcal{M}_{H}$, we need a stronger assumption on the subalgebra $A$, namely, we need \textit{amenability}, instead of relative amenability. See Step 1 in the proof below and observe that we really need amenability for a subalgebra $Q\subset pMp$.

\begin{Thm}\label{weakly compact action on core}
	Keep the setting above and suppose the following conditions. 
	\begin{itemize}
		\item The algebra $B$ is a type $\rm III_1$ factor.
		\item The algebra $A$ is amenable. 
		\item The algebra $N$ has the $\varphi_N$-W$^*$CMAP.
	\end{itemize}
Then $\mathcal{N}_{p\widetilde{M}p}(A)$ acts on $A$ as a weakly compact action for $(\widetilde{M}, \Tr, \pi_{\mathcal{H}}, \theta_{\mathcal{H}}, \mathcal{M})$.
\end{Thm}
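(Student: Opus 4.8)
The plan is to construct the net of unit vectors $(\xi_i)_i$ in the positive cone of $L^2(\mathcal{M})$ required by Definition \ref{def relative weakly compact action}, following the strategy of \cite[Theorem 3.5]{OP07} and \cite[Theorem 5.1]{PV11}, but feeding in the approximation maps produced in Lemma \ref{key approximation lemma} and the standard-form description of $\mathcal{M}$ from Lemma \ref{standard representation}. First I would record that, by Remark \ref{rem relative weakly compact action}, it suffices to produce a $\mathcal{G}$-central state (with $\mathcal{G}=\mathcal{N}_{p\widetilde{M}p}(A)$) satisfying condition (3) of Proposition \ref{lem relative weakly compact action}; by that proposition this is equivalent to all four conditions, so I will aim to build a state $\Omega$ on $\B(L^2(\mathcal{M}))$ with $\Omega\circ\pi_{\mathcal{H}}=\Tr(p\,\cdot\,p)$ on $p\widetilde{M}p$, $\Omega(\pi_{\mathcal{H}}(a)\theta_{\mathcal{H}}(\bar a))=1$ for $a\in\mathcal{U}(A)$, and $\Omega\bigl(\pi_{\mathcal{H}}(u)\theta_{\mathcal{H}}(\bar u)\mathcal{J}_{\mathcal{M}}\pi_{\mathcal{H}}(u)\theta_{\mathcal{H}}(\bar u)\mathcal{J}_{\mathcal{M}}\bigr)=1$ for $u\in\mathcal{G}$.

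The construction of $\Omega$ proceeds in two stages, mirroring the two hypotheses. Stage one uses amenability of $A$: because $A$ is amenable and sits inside $p\widetilde{M}p$, there is a $\mathcal{U}(A)$-central state (a hypertrace-type functional) on $\B(L^2(\widetilde M,\Tr))$ restricting to $\Tr(p\,\cdot\,p)$ on $pMp$; transporting this along the relative-tensor-product structure $\mathcal{H}=(\theta_H(p)H)\otimes_A pL^2(\widetilde M,\Tr)$ and using the normalizer action of $\mathcal{G}$ gives a state on $\mathcal{M}$ that is $\mathcal{G}$-invariant in the weak sense but so far is only ``relative'' — one needs to improve it to be genuinely central in the sense of condition (3), which is where the approximation maps enter. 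Stage two uses the $\varphi_N$-W$^*$CMAP of $N$ via Lemma \ref{key approximation lemma}: the net $(\varphi_\lambda)_\lambda$ of c.c.\ maps on $\widetilde M$, each a finite sum of maps of the form $d^*E_B(z^*\,\cdot\,y)c$, dualizes — exactly as in \cite[Theorem 5.1]{PV11} — to a net of normal completely positive maps on $\mathcal{M}_H$ (and hence on $\mathcal{M}$) converging to the identity, and these maps are used to average the state from stage one against the $\mathcal{G}$-action so as to kill the defect in condition (iii) of Proposition \ref{lem relative weakly compact action}(2), while the first two conditions are preserved by the normality and the $B$-bimodularity of the $\varphi_\lambda$. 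A Hahn--Banach/convexity argument then upgrades weak convergence to norm convergence, and a limit over an ultrafilter produces the desired $\mathcal{G}$-central $\Omega$.

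A few technical points will require care: one must verify that the maps $d^*E_B(z^*\,\cdot\,y)c$ on $\widetilde M$ do transport to well-defined normal c.p.\ maps on $\mathcal{M}_H=\mathrm{W}^*\{\pi_H(\widetilde M),\theta_H(\widetilde M^\op)\}$ acting as the identity on $\theta_H(\widetilde M^\op)$ — this is precisely where the relative-tensor-product identity $L_{\Lambda_\varphi(y)}^*L_{\Lambda_\varphi(x)}=E_B(y^*x)$ recorded in Subsection \ref{Relative tensor products, basic constructions and weak containments} is used, and why the maps must be built from $E_B$ and not from an arbitrary expectation. One must also check that passing from $\mathcal{M}_H$ to the compressed/amplified algebra $\mathcal{M}$ on $\mathcal{H}$ (via the $\otimes_A$ construction) is compatible with these maps and with the $\mathcal{G}$-action; here the hypothesis that $B$ is a $\mathrm{III}_1$ factor is what makes $\mathcal{Z}(\mathcal{M})$ behave well (cf.\ Lemma \ref{center of core of III1}) and lets the standard-form picture of Lemma \ref{standard representation} be used cleanly. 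I expect the main obstacle to be this stage-one-to-stage-two patching: specifically, showing that the amenability hypertrace for $A$ can be chosen $\mathcal{G}$-equivariantly enough that, after averaging with the $(\varphi_\lambda)$, conditions (i) and (ii) survive exactly while condition (iii) becomes asymptotically exact — this is the point at which, unlike in \cite{PV11}, one genuinely needs amenability of $A$ rather than amenability relative to $C_{\varphi_B}(B)$, because the approximation maps live on $\widetilde M$ rather than directly on $\mathcal{M}_H$ and so cannot themselves be used to relativize the hypertrace.
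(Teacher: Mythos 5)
Your proposal assembles the right ingredients (Proposition \ref{lem relative weakly compact action}, Lemma \ref{key approximation lemma}, the formula $L_{\Lambda_{\widehat\varphi}(z)}^*aL_{\Lambda_{\widehat\varphi}(y)}=E_B(z^*ay)$, and the observation that genuine amenability of $A$ is needed because the approximation maps live on $\widetilde M$), but the central mechanism is wrong. You propose to start from a hypertrace for $A$ on $\B(L^2(\widetilde M,\Tr))$ and then ``average the state against the $\mathcal{G}$-action'' using the maps $\varphi_\lambda$ to kill the defect in condition (iii). There is no such averaging procedure: $\mathcal{G}=\mathcal{N}_{p\widetilde Mp}(A)$ is in general not amenable, and the c.c.\ maps $d^*E_B(z^*\,\cdot\,y)c$ have no equivariance with respect to $\Ad(\pi_H(u)\theta_H(\bar u))$, so composing or averaging with them cannot produce asymptotic $\mathcal{G}$-invariance. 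You flag this patching as ``the main obstacle'' but do not resolve it, and resolving it is precisely the content of the theorem.

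The actual argument runs in the opposite order. The W$^*$CMAP is used \emph{first}: each $\varphi_i$ from Lemma \ref{key approximation lemma} is turned into a normal functional $\mu_i$ on $\mathcal{M}_H$ (via the vectors $\Lambda_{\widehat\varphi}(y)\otimes_B\Lambda_{\Tr}(cp)$) with $\mu_i(\pi_H(a)\theta_H(b^\op))=\Tr(p\varphi_i(a)pbp)$. Amenability of $A$ enters not through a hypertrace but through the weak containment ${}_{\widetilde M}L^2(\widetilde Mp)_{Q}\prec{}_{\widetilde M}(\theta_H(p^\op)H)_{Q}$ for every amenable $Q\subset p\widetilde Mp$, which --- combined with Lemma \ref{key lemma for approximation3} (this is where the type III$_1$ hypothesis on $B$ is used, to identify $\ast\text{-alg}\{\pi_H(\widetilde M),\theta_H(\widetilde M^\op)\}$ with $\widetilde M\ota\widetilde M^\op$ so that $\varphi_i\otimes\id_{Q^\op}$ makes sense on $\mathcal{C}_{H,Q}\cong\widetilde M\otm Q^\op$) --- yields $\|\mu_i|_{\mathcal{M}_{H,Q}}\|\le 1$. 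The asymptotic invariance under $\Ad(\pi_H(u)\theta_H(\bar u))$ for a \emph{single} $u\in\mathcal{G}$ is then extracted by applying the Ozawa--Popa polar decomposition inequality to $\mu_i$ restricted to $\mathcal{M}_{H,A^u}$, where $A^u=\mathrm{W}^*(A,u)$ is again amenable: contractivity plus $\mu_i(\pi_H(u)\theta_H(\bar u))\to 1$ forces $\||\mu_i^{A^u}|\circ\Ad U-|\mu_i^{A^u}|\|\to 0$. Finally the states $|\mu_i^A|/\||\mu_i^A|\|$ are transported to $\mathcal{M}$ by the normal u.c.p.\ map $T\mapsto V^*TV$ with $V\xi=\xi\otimes_A\Lambda_{\Tr}(p)$, using $\Tr$-preservation of $E_A$ to keep $\mathcal{G}$-invariance. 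Without the polar decomposition step your construction cannot get past condition (iii), so the proof as proposed does not go through.
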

\begin{proof}
	The proof consists of several steps.  
For any von Neumann subalgebra $Q\subset p\widetilde{M}p$, we write as $\mathcal{C}_{H,Q}$ (resp.\ $\mathcal{M}_{H, Q}$) the C$^*$-algebra (resp.\ the von Neumann algebra) generated by $\pi_H(p\widetilde{M}p)\theta_H(Q^\op )$.

\begin{step}
{\bf
	Using the $\varphi_N$-W$^*$CMAP of $N$, we construct a net of normal functionals on $\mathcal{M}_{H}$ which are contractive on $\mathcal{M}_{H,Q}$ for any amenable $Q$.
}
\end{step}

	In this step, we show that there is a net $(\mu_i)_i$ of normal functional on $\mathcal{M}_H$ such that 
	\begin{itemize}
		\item $\mu_i( \pi_H( a )  \theta_H(b^\op )) = \Tr(p\varphi_i(a)pbp)$ for all $a,b\in \widetilde{M}$,
		\item we have $\|\mu_i |_{\mathcal{M}_{H, Q}} \|\leq 1$ for any amenable von Neumann subalgebra $Q\subset p\widetilde{M}p$. 
	\end{itemize}
By Lemma \ref{key approximation lemma}, there exists a net $(\varphi_i)_i$ of c.c.\ maps on $\widetilde{M}$ such that $\varphi_i\to \id_{\widetilde{M}}$ point $\sigma$-weakly and that each $\varphi_i$ is a finite sum of $d^*E_B(z^* \, \cdot \,y)c$ for $c,d\in \widetilde{M}$ and $y,z\in \mathfrak{n}_{E_B}$. Observe that for any functional $d^*E_B(z^* \, \cdot \,y)c$ for some $c,d\in \widetilde{M}$ and $y,z\in \mathfrak{n}_{E_B}$, one can define an associated normal functional on $\mathcal{M}_H$ by 
	$$\mathcal{M}_H \ni T \mapsto \langle T (\Lambda_{\widehat{\varphi}}(y)\otimes_B \Lambda_{\Tr}(cp)), \Lambda_{\widehat{\varphi}}(z)\otimes_B \Lambda_{\Tr}(dp) \rangle_H.$$
In this way, since $\varphi_i$ is a finite sum of such maps, one can associate each $\varphi_i$ with a normal functional on $\mathcal{M}_H$, which we denote by $\mu_i$. 
Then by the formula $L_{\Lambda_{\widehat{\varphi}}(z)}^* a L_{\Lambda_{\widehat{\varphi}}(y)} = E_B(z^*ay)$ for $x,y\in \mathfrak{n}_{E_B}\cap \mathfrak{n}_\varphi$ and $a\in \widetilde{M}$, it is easy to verify that $\mu_i( \pi_H( a ) \theta_H(b^\op )) = \Tr(p\varphi_i(a)pbp)$ for $a,b\in \widetilde{M}$. We need to show that $\|\mu_i |_{\mathcal{M}_{H, Q}} \|\leq 1$ for any amenable $Q\subset p\widetilde{M}p$. For this, since $\mu_i$ is normal, we have only to show that $\|\mu_i |_{\mathcal{C}_{H, Q}} \|\leq 1$.

	By Lemma \ref{key lemma for approximation3} below, since $B$ is a type III$_1$ factor, the $\ast$-algebra generated by $\pi_H(\widetilde{M})$ and $\theta_H(\widetilde{M}^\op )$ is isomorphic to $\widetilde{M} \ota \widetilde{M}^\op $. So for any amenable $Q\subset p\widetilde{M}p$,  the C$^*$-algebra generated by $\pi_H(\widetilde{M})\theta_H(Q^\op )$ is isomorphic to $\widetilde{M}\otm Q^\op $. Hence one can define c.c.\ maps $\varphi_i \otimes \id_{Q^\op }$ on $\mathcal{C}_{H,Q}$. Since $Q$ is amenable, one has
	$$ {}_{\widetilde{M}}L^2(\widetilde{M}p)_{Q} \prec {}_{\widetilde{M}} (\theta_H(p^\op ) H)_{Q} .$$
Finally if we write as $\nu$ the associated $\ast$-homomorphism as with this weak containment, then the functional $T \mapsto \langle \nu\circ (\varphi_i\otimes \id_{Q^\op })(T) \Lambda_{\Tr}(p), \Lambda_{\Tr}(p)\rangle_{\Tr}$ coincides with $\mu_i$ on $\mathcal{C}_{H,Q}$, and hence we obtain $\| \mu_i |_{\mathcal{C}_{H,Q}}\|\leq 1$. Thus we obtained a desired net $(\mu_i)_i$.

\begin{step}
{\bf
	Using the amenability of $A$, the absolute values of normal functionals $(\mu_i)_i$ constructed in Step 1 satisfies desired properties on $\mathcal{M}_{H,A}$.
}
\end{step}

	Before this step, recall from the first part of the proof of \cite[Theorem 3.5]{OP07} that for any C$^*$-algebra $C$, any state $\omega$ on $C$ and any partial isometry $u\in C$ with $p:=uu^*$ and $q:=u^*u$, one has 
	$$\mathrm{max} \{\|\omega(\, \cdot \, u^*) - \omega(\, \cdot \, q)\|^2, \|\omega(u \, \cdot \, u^*) - \omega(q\, \cdot \, q)\|^2 \} \leq 4  \left(\omega(p)+\omega(q)-\omega(u)-\omega(u^*)\right).$$

	Let $(\mu_i)_i$ be a net constructed in Step 1. For notation simplicity, for any amenable von Neumann subalgebra $Q\subset p\widetilde{M}p$ we denote by $\mu_i^Q$ the restriction of $\mu_i$ on $\mathcal{M}_{H,Q}$. 
\begin{claim}
	For any amenable $Q$, one has
	$$\| \mu_i^Q \|\to 1 \quad \text{and} \quad \|\mu_i^Q - |\mu_i^Q| \| \to 0,$$
where $|\mu_i^Q|$ is the absolute value of $\mu_i^Q$.
\end{claim}
\begin{proof}[Proof of Claim]
	By Step 1, we know $\|\mu_i^Q \|\leq 1$ and hence $\| \mu_i^Q \|\to 1$, since $\mu_i(\pi_H(p) \theta_H(p^\op )) \to 1$. 
Let $\mu_i^Q = |\mu_i^Q|(\, \cdot \, u_i)$ be the polar decomposition with a partial isometry $u_i\in \mathcal{M}_{H,Q}$. 
For $p_i:=u_iu_i^*$ and $q_i:=u_i^*u_i$, it holds that
	$$|\mu_i^Q| = \mu_i^Q(\, \cdot \, u_i^*), \quad |\mu_i^Q| = |\mu_i^Q|(q_i\, \cdot \, q_i), \quad\text{and} \quad \mu_i^Q = \mu_i^Q(\, \cdot \, p_i) = \mu_i^Q(q_i\, \cdot \, ).$$ 
The final equation says that $\mu_i^Q(p_i) = \mu_i^Q(1_Q)\to 1$. Then by the inequality at the beginning of this step, we have
\begin{eqnarray*}
	\|\mu_i^Q - |\mu_i^Q| \|^2 
	&=& \||\mu_i^Q|(\, \cdot \, u_i^*) - |\mu_i^Q|(\, \cdot \, q_i )\|^2 \\
	&\leq& 4 \, \left( |\mu_i^Q|(p_i)+|\mu_i^Q|(q_i)-|\mu_i^Q|(u_i)-|\mu_i^Q|(u_i^*) \right) \\
	&\leq& 4 \, \left( \|\mu_i^Q\| + \|\mu_i^Q\| - 2\mathrm{Re}(\mu_i^Q(p_i) ) \right)  \to 0.
\end{eqnarray*}
This completes the claim.
\end{proof}

	Put $\omega_i:= |\mu_i^A|/ \|\mu_i^A \|$. In this step, we show that $(\omega_i)_i$ satisfies the following conditions:
\begin{itemize}
	\item[$\rm (i)$] $\omega_i(\pi_H(x) \theta_H(p^\op ))\to \Tr(pxp)$, \quad for all $x\in p\widetilde{M}p$;
	\item[$\rm (ii)$] $\omega_i(\pi_H(a)\theta_H(\bar{a}))\to 1$, \quad for all $a \in \mathcal{U}(A)$;
	\item[$\rm (iii)$] $\|\omega_i \circ \Ad (\pi_H(u)\theta_H(\bar{u})) - \omega_i \|_{\mathcal{M}_{H,A}^*} \to 0$, \quad for all $u \in \mathcal{N}_{p\widetilde{M}p}(A)$.
\end{itemize}
Since $\|\mu_i^A\| \to 1$ and $\|\mu_i^A - |\mu_i^A| \| \to 0$, to verify these three conditions, we have only to show that $(\mu_i)_i$ satisfies the same conditions. 
Then by construction, it is easy to verify (i) and (ii). So we will check only the final condition.

	Fix $u \in \mathcal{N}_{p\widetilde{M}p}(A)$ and recall that the von Neumann algebra $A^u$ generated by $A$ and $u$ is amenable \cite[Lemma 3.4]{OP07}. 
Hence by Step 1, $\| |\mu_i^{A^u}| - \mu_i^{A^u} \|_{\mathcal{M}_{H,A^u}^*} \to 0$. Combined with the inequality at the beginning of this step, putting $U:=\pi_H(u) \theta_H(\bar{u})$, we have
\begin{eqnarray*}
	\lim_i\|\mu_i^{A} \circ \Ad U - \mu_i^{A}\|^2_{\mathcal{M}_{H,A}^*}
	&\leq& \lim_i\|\mu_i^{A^u} \circ \Ad U - \mu_i^{A^u}\|^2_{\mathcal{M}_{H,A^u}^*}\\
	&=& \lim_i\||\mu_i^{A^u}| \circ \Ad U - |\mu_i^{A^u}|\|^2_{\mathcal{M}_{H,A^u}^*} \\
	&\leq& \lim_i 4\, \left( 2 - 2\mathrm{Re}(|\mu_i^{A^u}| (U)) \right) \\
	&=& \lim_i 4\, \left( 2 - 2\mathrm{Re}(\mu_i^{A^u} (U)) \right) = 0 .
\end{eqnarray*}
Thus we proved that the net $(\omega_i)_i$ of normal states on $\mathcal{M}_{H}$ satisfies conditions (i), (ii) and (iii) above.

\begin{step}
{\bf
	Using a normal u.c.p.\ map from $\mathcal{M}$ to $\mathcal{M}_{H,A}$, we obtain desired functionals on $\mathcal{M}$.
}
\end{step}

	In this step, we first construct a normal u.c.p.\ map $\mathcal{E} \colon \mathcal{M} \to \mathcal{M}_{H,A}$ satisfying 
	$$ \mathcal{E}( \pi_{\mathcal{H}}(a) \theta_{\mathcal{H}} (b^\op ) ) = \pi_H(pap) \theta_H(E_A(pbp)^\op ), \quad \text{for any } a,b \in \widetilde{M},$$
where $E_A$ is the unique $\Tr$-preserving conditional expectation from $p\widetilde{M}p$ onto $A$.

	For this, observe first that for any right $A$-module $K$ with the right action $\theta_K$, there is an isometry $V_K \colon K \to K\otimes_A pL^2(\widetilde{M},\Tr)$ given by $V\xi = \xi\otimes_A \Lambda_{\Tr}(p)$ for any left $\Tr$-bounded vector $\xi\in K$. Indeed, using the fact $\Lambda_{\Tr}(p)= J_{\Tr}\Lambda_{\Tr}(p)$, one has 
	$$\| V \xi \|= \|\xi \otimes_A \Lambda_{\Tr}(p)\|=\|L_\xi \Lambda_{\Tr}(p)\|_{2,\Tr}=\|L_\xi \Lambda_{\Tr}(p)\|_{2,\Tr} = \|\theta_K(p^\op )\xi\|_K=\|\xi\|_K.$$
Hence, since $\pi_H(p)\theta_H(p^\op ) H$ is a right $A$-module, one can define an isometry 
	$$V\colon \pi_H(p)\theta_H(p^\op ) H \to \pi_{\mathcal{H}}(p)\theta_{\mathcal{H}}(p^\op )\mathcal{H}\subset \mathcal{H}; \quad V \xi := \xi \otimes_A \Lambda_{\Tr}(p) .$$ 
It is then easy to verify that 
	$$V^* \pi_{\mathcal{H}}(a) \theta_{\mathcal{H}} (b^\op ) V = \pi_H(pap) \theta_H(E_A(pbp)^\op ), \quad \text{for any } a,b \in \widetilde{M}.$$
Thus we obtain a normal u.c.p.\ map $\mathcal{E} \colon \mathcal{M} \to \mathcal{M}_{H,A}$ by $\mathcal{E}(T) := V^* T V$. 

	Let now $(\omega_i)_i$ be the net of normal states on $\mathcal{M}_{H,A}$ constructed in Step 2. By conditions (i) and (ii) on $(\omega_i)_i$, it is easy to see that normal states $\gamma_i:=\omega_i\circ \mathcal{E}$ on $\mathcal{M}$ satisfy 
\begin{itemize}
	\item[$\rm (i)'$] $\gamma_i(\pi_{\mathcal{H}}(x))\to \tau(pxp)$, \quad for all $x\in \widetilde{M}$;
	\item[$\rm (ii)'$] $\gamma_i(\pi_{\mathcal{H}}(a)\theta_{\mathcal{H}}(\bar{a}))\to 1$, \quad for all $a \in \mathcal{U}(A)$.
\end{itemize}
Finally since $E_A$ satisfies $E_A \circ \Ad u = \Ad u \circ E_A $ for any $u\in \mathcal{N}_{p\widetilde{M}p}(A)$, one has 
	$$\gamma_i\circ \Ad (\pi_{\mathcal{H}}(u)\theta_{\mathcal{H}}(\bar{u})) = \omega_i \circ \Ad (\pi_{\mathcal{H}}(u)\theta_{\mathcal{H}}(\bar{u})) \circ \mathcal{E}$$
on $\pi_{\mathcal{H}}(\widetilde{M})\theta_{\mathcal{H}}(\widetilde{M})$, and hence on $\mathcal{M}$ by normality. So condition (iii) on $(\omega_i)_i$ shows
\begin{itemize}
	\item[$\rm (iii)'$] $\|\gamma_i \circ \Ad (\pi_{\mathcal{H}}(u)\theta_{\mathcal{H}}(\bar{u})) - \gamma_i \| \to 0$, \quad for all $u \in \mathcal{N}_{p\widetilde{M}p}(A)$.
\end{itemize}
Thus the net $(\gamma_i)_i$ on $\mathcal{M}$ satisfies conditions $\rm (i)'$, $\rm (ii)'$ and $\rm (iii)'$. By Proposition \ref{lem relative weakly compact action}(2), we conclude that $\mathcal{N}_{p\widetilde{M}p}(A)$ acts on $A$ weakly compactly for $(\widetilde{M}, \Tr, \pi_{\mathcal{H}}, \theta_{\mathcal{H}}, \mathcal{M})$.
\end{proof}

We prove a lemma used in the proof above.

\begin{Lem}\label{key lemma for approximation3}
	Assume that $B$ is a type $\rm III_1$ factor. Then the $\ast$-algebra generated by $\pi_H(\widetilde{M})$ and $\theta_H(\widetilde{M}^\op )$ is isomorphic to $\widetilde{M}\ota \widetilde{M}^\op $.
\end{Lem}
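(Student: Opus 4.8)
The statement asserts that the $*$-algebra generated by $\pi_H(\widetilde M)$ and $\theta_H(\widetilde M^\op)$ inside $\B(H)$, where $H = L^2(\widetilde M,\widehat\varphi)\otimes_B L^2(\widetilde M,\Tr)$, is isomorphic (as a $*$-algebra) to the algebraic tensor product $\widetilde M \ota \widetilde M^\op$. The nontrivial content is that the natural surjective $*$-homomorphism $\widetilde M \ota \widetilde M^\op \to \ast\text{-alg}\{\pi_H(\widetilde M),\theta_H(\widetilde M^\op)\}$, sending $x\otimes y^\op\mapsto \pi_H(x)\theta_H(y^\op)$, is \emph{injective}. The plan is to use the identification of $H$ from Proposition \ref{lemma for relative tensor}, under which $\widetilde\pi_H(\widetilde M)$ lives on the left three tensor legs $L^2(\R)\otimes L^2(N)\otimes L^2(B)$ and $\widetilde\theta_H(\widetilde M^\op)$ lives on the right three legs $L^2(B)\otimes L^2(N)\otimes L^2(\R)$, with the two copies of $\widetilde M$ overlapping only on the central $L^2(B)$ factor. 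On that central factor the left action is $\pi_{\sigma^{\varphi_B}}$-like and the right action is $\theta_{\alpha^B}$-like; what makes things work is that $B$ is a type $\rm III_1$ factor.

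First I would reduce, via Proposition \ref{lemma for relative tensor}, to showing: if $\sum_{k=1}^n \widetilde\pi_H(x_k)\widetilde\theta_H(y_k^\op) = 0$ with $x_1,\dots,x_n \in \widetilde M$ linearly independent, then all $y_k=0$. Pick $\Xi \in L^2(\R)\otimes L^2(N)$ and $\Xi' \in L^2(N)\otimes L^2(\R)$ and slice: for the operator $(\Xi^*\otimes 1_{L^2(B)}\otimes \Xi'^*)\bigl(\sum_k \widetilde\pi_H(x_k)\widetilde\theta_H(y_k^\op)\bigr)(\Xi\otimes 1_{L^2(B)}\otimes \Xi')$, which is an operator on $L^2(B)$, one obtains an expression of the form $\sum_k b_k(\Xi)\, c_k(\Xi')$ where $b_k(\Xi)\in \B(L^2(B))$ comes from compressing the left $\widetilde M$-part against $\Xi$ (so $b_k(\Xi)$ lies in the von Neumann algebra generated by $\{\sigma^{\varphi_B}_{-s}(b): b\in B,\ s\in\R\} = $ the image of $B$ under $\pi_{\sigma^{\varphi_B}}$ integrated, which one checks generates a copy of $B$ — more precisely it lands in $B\subset\B(L^2(B))$ after the appropriate change of variables) and $c_k(\Xi')$ similarly lies in the \emph{commutant} $B' = J_BBJ_B\subset\B(L^2(B))$, i.e. in a copy of $B^\op$. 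Varying $\Xi$ one recovers a separating family of left $B$-operators; the key algebraic fact to invoke is that $B\ota B^\op \to \B(L^2(B))$ is injective when $B$ is a factor (the left and right actions of a factor on its standard form generate $B\ota B^\op$ faithfully), which forces, after varying $\Xi,\Xi'$, that the $y_k^\op$ are already determined, hence $y_k=0$ when the $x_k$ are independent. (This is the same mechanism as Lemma \ref{center of core of III1}'s use of $M_{\varphi_M}'\cap M = \C$: the $\rm III_1$ hypothesis ensures enough "spread" of $B$ inside $\B(L^2(B))$.)

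Alternatively, and perhaps more cleanly, I would argue directly that it suffices to know $B\ota B^\op \hookrightarrow \B(L^2(B))$ is injective, then observe that the $*$-algebra generated by $\widetilde\pi_H(\widetilde M)$ and $\widetilde\theta_H(\widetilde M^\op)$ is, via the Proposition \ref{lemma for relative tensor} picture, contained in $(\widetilde M$ acting on $L^2(\R)\otimes L^2(N)\otimes L^2(B)) \ota (B$-$B^\op$ overlap on $L^2(B)) \ota (\widetilde M^\op$ acting on $L^2(B)\otimes L^2(N)\otimes L^2(\R))$ in a way that respects tensor decompositions, and the only overlap of the two algebras is on the $L^2(B)$ leg where it is exactly $B$ against $B^\op$; since $\widetilde\pi_H$ and $\widetilde\theta_H$ are each already injective (normal unital $*$-homomorphisms of von Neumann algebras), the composite map is injective precisely because $B\ota B^\op\to\B(L^2(B))$ is.

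The main obstacle I expect is making the "slicing" argument rigorous: one must verify carefully that compressing $\widetilde\pi_H(x)$ by a vector $\Xi\in L^2(\R)\otimes L^2(N)$ really produces, as $\Xi$ ranges, a weakly dense (or at least point-separating) subset of $B$ acting standardly on $L^2(B)$ — this is where the precise form $\widetilde\pi_H(x) = \pi_{\alpha}(x)$ for $x\in N\ovt B$ (with $\alpha = \sigma^{\varphi_N}\otimes\sigma^{\varphi_B}$) and $\widetilde\pi_H(\lambda_t) = \lambda_t\otimes 1_N\otimes 1_B$ from Proposition \ref{lemma for relative tensor} must be unwound, together with the fact that $B$ is a \emph{factor} of type $\rm III_1$ so that $B_{\varphi_B}'\cap B = \C$, which guarantees that the slices of the $L^2(B)$-leg are rich enough. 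Once that density/separation is in hand, injectivity of $B\ota B^\op$ on $\B(L^2(B))$ (standard for factors) finishes the argument, and the conclusion $\ast\text{-alg}\{\pi_H(\widetilde M),\theta_H(\widetilde M^\op)\}\cong \widetilde M\ota\widetilde M^\op$ follows.
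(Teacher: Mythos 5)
Your route is genuinely different from the paper's, and its skeleton (slice onto the central $L^2(B)$ leg, land in $B$ and $B'=J_{\varphi_B}BJ_{\varphi_B}$, invoke injectivity of $B\ota B'\to\B(L^2(B))$ for a factor) is plausible, but the crucial deduction is broken as written. You claim that injectivity of $B\ota B^\op\to\B(L^2(B))$ "forces, after varying $\Xi,\Xi'$, that \ldots $y_k=0$ when the $x_k$ are independent." The problem is that linear independence of $x_1,\dots,x_n$ in $\widetilde M$ does \emph{not} give linear independence of the slices $b_k(\Xi)$ for any fixed $\Xi$: if $x_1=n_1\otimes 1_B$ and $x_2=n_2\otimes 1_B$ with $n_1,n_2\in N$ independent, every slice $b_k(\Xi)$ is a scalar multiple of $1_B$. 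So for a fixed $\Xi$ you cannot read off $c_k(\Xi')=0$ from $\sum_k b_k(\Xi)\otimes c_k(\Xi')=0$ in $B\ota B'$. A correct completion needs an extra linear-algebra layer: fix $\Xi'$, expand the $c_k(\Xi')$ over a basis of their span with coefficients $\mu_{kj}$ independent of $\Xi$, deduce $\sum_k\mu_{kj}b_k(\Xi)=0$ for \emph{all} $\Xi$, then use that the slicings jointly separate $\pi_H(\widetilde M)$ (by polarization in $\Xi$) to get $\sum_k\mu_{kj}x_k=0$, whence $\mu_{kj}=0$ and $c_k(\Xi')=0$. You gesture at "a separating family" but never combine it with the tensor-product injectivity in a way that closes this loop. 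Separately, your "alternative, cleaner" paragraph is not correct: there is no three-fold tensor decomposition respected by both actions, because $\widetilde\pi_H(x)=\pi_\alpha(x)$ entangles the left $L^2(\R)$ leg with $L^2(N)\otimes L^2(B)$ (and likewise on the right), so the asserted containment in a threefold algebraic tensor product fails and nothing can be deduced from it.

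You also misidentify where the type $\rm III_1$ hypothesis enters. It has nothing to do with $B_{\varphi_B}'\cap B=\C$ or with "richness of slices." The paper argues quite differently: given $\sum_i\pi_H(x_i)\theta_H(y_i^\op)=0$, it runs the classical Murray--von Neumann matrix/support-projection trick (take the left support of the column $Y$, average over $\mathcal{U}(\theta_H(\widetilde M^\op)\otimes\C^n)$) to produce nonzero $z_i\in\theta_H(\mathcal{Z}(\widetilde M)^\op)$ with $\pi_H(x_i)z_i=0$; the $\rm III_1$ assumption is used solely through Lemma \ref{center of core of III1} to identify $\mathcal{Z}(\widetilde M)=\mathcal{Z}(N)$, which in the Proposition \ref{lemma for relative tensor} picture acts on the right-hand legs $L^2(N)\otimes L^2(\R)$, disjoint from the three legs carrying $\pi_H(\widetilde M)$; hence $\pi_H(x_i)z_i=0$ forces $x_i=0$. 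Your slicing argument, once repaired as above, appears to use only factoriality of $B$ (for $B\ota B'\hookrightarrow\B(L^2(B))$), so you should either locate where $\rm III_1$ is genuinely needed in your approach or make explicit that factoriality suffices for this particular lemma; as written, the appeal to the $\rm III_1$ structure is a placeholder rather than an argument.
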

\begin{proof}
	Let $\nu \colon \widetilde{M}\ota \widetilde{M}^\op  \to \ast\textrm{-alg}\{ \pi_H(\widetilde{M}), \theta_H(\widetilde{M}^\op ) \}$ be a $\ast$-homomorphism given by $\nu(x \otimes y^\op ) = \pi_H(x) \theta_H(y^\op )$ for $x,y \in \widetilde{M}$. We will show that $\nu$ is injective. 

	Assume that $\nu (\sum_{i=1}^n x_i \otimes y_i^\op ) = \sum_{i=1}^n \pi_H(x_i)\theta_H(y_i^\op )=0$ for some $x_i,y_i \in \widetilde{M}$. We may assume $y_i\neq 0$ for all $i$. Put
	\begin{eqnarray*}
X:=\left[ 
\begin{array}{cccc}
\pi_H(x_1) & \pi_H(x_2) & \cdots & \pi_H(x_n) \\
0 & 0 & \cdots & 0 \\
\vdots & \vdots & \ddots & \vdots \\
0 & 0 & \cdots & 0 \\
\end{array} 
\right]
\quad \text{and} \quad 
Y:=\left[ 
\begin{array}{cccc}
\theta_H(y_1^\op ) & 0 & \cdots & 0 \\
\theta_H(y_2^\op ) & 0 & \cdots & 0 \\
\vdots & \vdots & \ddots & \vdots \\
\theta_H(y_n^\op ) & 0 & \cdots & 0 \\
\end{array} 
\right]
	\end{eqnarray*}
and observe $XY=0$. We regard them as elements in $\B(H)\otimes \mathbb{M}_n$. Let $p$ be the left support projection of $Y$ which is contained in $\theta_H(\widetilde{M}^\op ) \otimes \M_n$ and satisfies $Xp=0$. Since $Xupu^*=0$ for any unitary $u\in \B(H)\otimes \M_n$ which commutes with $X$, and since $\theta_H(\widetilde{M}^\op )\otimes \C^n $ commutes with $X$ (where $\C^n \subset \M_n$ is the diagonal embedding), we have $X z =0$ for $z:= \sup\{ upu^* \mid u \in \mathcal{U}(\theta_H(\widetilde{M}^\op )\otimes \C^n)\}$. Observe that $z$ is contained in 
	$$(\theta_H(\widetilde{M}^\op )\otimes \M_n) \cap (\theta_H(\widetilde{M}^\op )\otimes \C^n)' = \theta_H(\mathcal{Z}(\widetilde{M})^\op )\otimes \C^n$$
and hence we can write $z= (z_i)_{i=1}^n$ for some $z_i \in \theta_H(\mathcal{Z}(\widetilde{M})^\op )$. Then the condition $Xz=0$ is equivalent to $\pi_H(x_i)z_i=0$ for all $i$. Observe also that $z_i\neq 0$ for all $i$. Indeed, since $z \geq p$ and $pY=Y$, we have $zY=Y$ and hence $z_i \theta_H(y_i^\op )=\theta_H(y_i^\op )$. This implies $z_i\neq 0$ since we assume $y_i\neq 0$ for all $i$. 

	Now we claim that $\pi_H(x_i)z_i=0$ is equivalent to $x_i=0$ or $z_i =0$. Once we prove the claim, since $z_i\neq 0$, we have $x_i=0$ and so $\sum_{i=1}^nx_i\otimes y_i^\op =0$, that means injectivity of $\nu$.

	By Lemma \ref{center of core of III1}, the center of $\widetilde{M}$ coincides with $\mathcal{Z}(N)$. Then by Proposition \ref{lemma for relative tensor}, we identify $H= L^2(\R)\otimes L^2(N)\otimes L^2(B,\psi_B) \otimes L^2(N)\otimes L^2(\R)$ on which we have 
	\begin{align*}
	&\pi_H(\widetilde{M}) \subset \B(L^2(\R)\otimes L^2(N)\otimes L^2(B,\psi_B)) \otimes \C1_{L^2(N)\otimes L^2(\R)} , \\
	&\theta_H(\widetilde{M}^\op ) \subset \C1_{L^2(\R)\otimes L^2(N)} \otimes \B(L^2(B,\psi_B) \otimes L^2(N)\otimes L^2(\R)).
	\end{align*}
In particular $\theta_H(\mathcal{Z}(\widetilde{M})^\op ) = \theta_H(\mathcal{Z}(N)) \subset \C1_{L^2(\R)\otimes L^2(N) \otimes L^2(B,\psi_B)} \otimes \B(L^2(N)\otimes L^2(\R))$, and hence the C$^*$-algebra generated by $\pi_H(\widetilde{M})$ and $\theta_H(\mathcal{Z}(\widetilde{M})^\op )$ is isomorphic to $\widetilde{M} \otm \mathcal{Z}(\widetilde{M})^\op $. Thus since $z_i\in \theta_H(\mathcal{Z}(\widetilde{M})^\op )$, the condition $\pi_H(x_i)z_i=0$ is equivalent to $x_i=0$ or $z_i=0$. This completes the proof.
\end{proof}

\section{\bf Proof of Theorem \ref{thmA}}\label{Proof of Theorem A free quantum group factors}

	In this section, we prove Theorem \ref{thmA}. We follow the proof of \cite[Theorem B]{Is13}, which originally comes from the one of \cite[Theorem 1.4]{PV12}.

\subsection{\bf Some general lemmas}\label{Some general properties}

	Let $\G$ be a compact quantum group with the Haar state $h$ and put $N_0:=C_{\rm red}(\G)\subset L^\infty(G) =:N$ and $\varphi_N:=h$. Let $(X,\varphi_X)$ be a von Neumann algebra with a faithful normal semifinite weight. Let $\alpha^X$ be an action of $\R$ on $X$ and put $\alpha:=\sigma^{\varphi_N}\otimes \alpha^X$ and $\mathcal{M}:=(N\ovt X)\rtimes_{\alpha}\R$.

	In this setting, we prove two general lemmas. We use the following general fact for quantum groups.
\begin{itemize}
	\item For any $x\in \mathrm{Irred}(\G)$, there is an orthonormal basis $\{u_{i,j}^x \}_{i,j} \subset C_{\rm red}(\G)$ of $H_x$ with $\lambda_{i,j}^x>0$ such that $\sigma_t^h(u_{i,j}^x) = \lambda_{i,j}^x u_{i,j}^x$ for all $t\in \R$. 
\end{itemize}
Recall that all the linear spans of such a basis, which is usually called a dense Hopf $\ast$-algebra, make a norm dense $\ast$-subalgebra of $C_{\rm red}(\G)$. We note that each matrix $(u_{i,j}^x)_{i,j}$ may not be a unitary, since we assume $\{u_{i,j}^x \}_{i,j}$ is orthonormal (i.e.\ they are normalized). 
\begin{conv}
	Throughout this section, we fix such a basis $\{u_{i,j}^x \}_{i,j}^x$. For notation simplicity, we identify any subset $\mathcal{E}\subset \mathrm{Irred}(\G)$ (possibly $\mathcal{E}=\mathrm{Irred}(\G)$) with the set $\{u_{i,j}^x \mid x\in\mathcal{E}, i,j\}$. 
\end{conv}
Note that this identification will not make any confusion, since in proofs of this section we only use the property that $\mathcal{E}\subset \mathrm{Irred}(\G)$ is a finite set.

	Here we record an elementary lemma.

\begin{Lem}\label{lemma for free quantum group1}
	For any $a\in N_0$, the element $\pi_{\sigma^{\varphi_N}}(a)\in N\rtimes_{\sigma^{\varphi_N}} \R \subset \B(L^2(N)\otimes L^2(\R))$ is contained in $N_0\otm C_b (\R)$, where $C_b(\R)$ is the set of all norm continuous bounded functions on $\R$. 
\end{Lem}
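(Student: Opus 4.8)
The plan is to unwind the definition of the representation $\pi_{\sigma^{\varphi_N}}$ and then reduce to the dense Hopf $\ast$-algebra by a norm approximation. Recall from Subsection~\ref{Tomita Takesaki theory and operator valued weights} that, under the identification $L^2(N)\otimes L^2(\R)\simeq L^2(\R,L^2(N))$, one has $(\pi_{\sigma^{\varphi_N}}(a)\xi)(s)=\sigma^{\varphi_N}_{-s}(a)\xi(s)$ for $\xi\in L^2(\R,L^2(N))$ and $s\in\R$; in other words $\pi_{\sigma^{\varphi_N}}(a)$ is the operator of multiplication by the $N$-valued function $s\mapsto \sigma^{h}_{-s}(a)$ (here $\varphi_N=h$). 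Thus the content of the lemma is precisely that this function, regarded as an element of $\B(L^2(N))\ovt\B(L^2(\R))$ via $C_b(\R)\subset L^\infty(\R)\subset\B(L^2(\R))$ acting by multiplication, lies in $N_0\otm C_b(\R)$ (under the standard identification of the two tensor legs).

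First I would treat a single basis element $a=u^x_{i,j}$. Since $\sigma^{h}_t(u^x_{i,j})$ is a positive scalar multiple of $u^x_{i,j}$ for each $t\in\R$, writing the eigenvalue as $(\lambda^x_{i,j})^{it}$ with $\lambda^x_{i,j}>0$, we get $\sigma^{h}_{-s}(u^x_{i,j})=(\lambda^x_{i,j})^{-is}u^x_{i,j}$. The scalar function $f^x_{i,j}\colon s\mapsto(\lambda^x_{i,j})^{-is}=e^{-is\log\lambda^x_{i,j}}$ is norm continuous and bounded (it has modulus $1$), hence $f^x_{i,j}\in C_b(\R)$, and therefore $\pi_{\sigma^{\varphi_N}}(u^x_{i,j})=u^x_{i,j}\otimes f^x_{i,j}\in N_0\ota C_b(\R)$. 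By linearity, $\pi_{\sigma^{\varphi_N}}(a)\in N_0\ota C_b(\R)\subset N_0\otm C_b(\R)$ for every $a$ in the dense Hopf $\ast$-algebra spanned by the $\{u^x_{i,j}\}$.

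Finally I would pass to an arbitrary $a\in N_0=C_{\rm red}(\G)$ by norm approximation: choose a sequence $(a_n)_n$ in the Hopf $\ast$-algebra with $\|a_n-a\|\to 0$. As $\pi_{\sigma^{\varphi_N}}$ is a $\ast$-homomorphism it is norm contractive, so $\|\pi_{\sigma^{\varphi_N}}(a_n)-\pi_{\sigma^{\varphi_N}}(a)\|\to 0$; each $\pi_{\sigma^{\varphi_N}}(a_n)$ lies in $N_0\otm C_b(\R)$ and the latter is norm closed in $\B(L^2(N)\otimes L^2(\R))$, whence $\pi_{\sigma^{\varphi_N}}(a)\in N_0\otm C_b(\R)$, as desired.

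There is essentially no obstacle here; the only mild points are the explicit description of the modular eigenvalues as functions in $C_b(\R)$ — immediate once they are written in the form $e^{-is\log\lambda}$ — and the remark that $\pi_{\sigma^{\varphi_N}}$, being a $\ast$-homomorphism, is norm contractive, so that membership in the norm-closed set $N_0\otm C_b(\R)$ survives the limit.
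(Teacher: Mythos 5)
Your proof is correct and follows essentially the same route as the paper: reduce to modular eigenvectors (the basis elements $u^x_{i,j}$), observe that $\pi_{\sigma^{\varphi_N}}(a)=a\otimes f$ with $f(s)=\lambda^{-is}\in C_b(\R)$, and conclude by linearity and norm density. The paper compresses the density step into the phrase ``we may assume $a$ is an eigenvector,'' which you have simply spelled out.
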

\begin{proof}
	We may assume that $a$ is an eigenvector, namely, $\sigma_t^{\varphi_N}(a)=\lambda^{it} a$ for some $\lambda>0$. Then since $(\pi_{\sigma^{\varphi_N}}(a)\xi)(t) = \sigma_{-t}^{\varphi_N}(a)\xi(t)=\lambda^{-it} a\xi(t)$ for $t\in \R$, one has $\pi_{\sigma^{\varphi_N}}(a) = a \otimes f$, where $f\in C_{b}(\R)$ is given by $f(t):=\lambda^{-it}$. Hence we get the conclusion.
\end{proof}

	We fix a faithful normal semifinite weight $\varphi_X$ on $X$ and put $\psi:=\varphi_N \otimes \varphi_X$ with its dual weight $\widehat{\psi}$. Recall that the compression map by $ P_N \otimes 1_X \otimes 1_{L^2(\R)}$, where $P_N$ is the one dimensional projection from $L^2(N)$ onto $\C \Lambda_{\varphi_N}(1_N)$, is a conditional expectation $E_{X\rtimes \R}\colon \mathcal{M} \to X\rtimes \R$, which satisfies $\widehat{\psi}= \widehat{\varphi}_X\circ E_{X\rtimes \R}$ (this was shown in the first half of the proof of Lemma \ref{operator valued weight}). 
For any $a\in \mathcal{M}$ and $f\in C_c(\R, \mathcal{M})\mathfrak{n}_{\psi}$, we denote by $a f$ an element in $C_c(\R, \mathcal{M})\mathfrak{n}_{\psi}$ given by $t \mapsto \alpha_{-t}(a)f(t)$. Observe that $\Lambda_{\widehat{\psi}}(\widehat{\pi}_{\alpha}(af))= \pi_\alpha(a)\Lambda_{\widehat{\psi}}(\widehat{\pi}_{\alpha}(f))$. A simple computation shows that for any $a,b\in N$ and $f,g\in C_c(\R,X)\mathfrak{n}_{\varphi_X}$, 
\begin{eqnarray*}
	\langle af, bg \rangle_{\widehat{\psi}} 
	=  \langle a, b\rangle_{\varphi_N} \langle f,g \rangle_{\widehat{\varphi}_X} .
\end{eqnarray*}
Observe that all the linear spans of $uf$ for $u\in \mathrm{Irred}(\G)$ and $f\in C_c(\R,X)\mathfrak{n}_{\varphi_X}$ are dense in $L^2(N)\otimes L^2(X) \otimes L^2(\R)$. So if $\{f_\lambda\}_\lambda\subset C_c(\R,X)\mathfrak{n}_{\varphi_X}$ is an orthonormal basis in $L^2(X)\otimes L^2(\R)$, then the set $\{u f_\lambda\}_{u,\lambda}$ is an orthonormal basis of $L^2(N)\otimes L^2(X) \otimes L^2(\R)$. 
Along this basis, any $a \in \mathfrak{n}_{\widehat{\psi}}$ can be decomposed in $L^2(N)\otimes L^2(X) \otimes L^2(\R)$ as, for some $\alpha_{u,\lambda}\in \C$,
	$$\Lambda_{\widehat{\psi}}(a)= \sum_{u,\lambda} \alpha_{u,\lambda} u f_\lambda = \sum_{u,\lambda} \alpha_{u,\lambda}\pi_{\varphi_N}(u) \Lambda_{\widehat{\psi}}(\widehat{\pi}_{\alpha}(f_\lambda)) = \sum_{u} \pi_{\sigma^{\varphi_N}}(u) a_u$$ 
where $a_u = \sum_\lambda \alpha_{u,\lambda} f_\lambda \in L^2(\R, X)$. 
If we apply $(P_N \otimes 1_X \otimes 1_{L^2(\R)}) \pi_{\sigma^{\varphi_N}}(v^*)$ for some $v\in \mathrm{Irred}(\G)$ to this decomposition, then on the one hand
	$$(P_N \otimes 1_X \otimes 1_{L^2(\R)}) \pi_{\sigma^{\varphi_N}}(v^*) \, \Lambda_{\widehat{\psi}}(a)=(P_N \otimes 1_X \otimes 1_{L^2(\R)})  \, \Lambda_{\widehat{\psi}}(v^*a)=  \Lambda_{\widehat{\psi}}(E_{X\rtimes \R}(v^*a))$$
and on the other hand
\begin{eqnarray*}
	&&(P_N \otimes 1_X \otimes 1_{L^2(\R)}) \pi_{\sigma^{\varphi_N}}(v^*) \, \sum_{u} \pi_{\sigma^{\varphi_N}}(u) a_u \\
	&=& \sum_{u} \varphi_N(v^*u) a_u  
	=   \varphi_N(v^*v) a_v = a_v .
\end{eqnarray*}
Hence we have $a_v = \Lambda_{\widehat{\psi}}(E_{X\rtimes \R}(v^*a)) $ for all $v\in \mathrm{Irred}(\G)$. Thus we observed that any element $a\in\mathfrak{n}_{\widehat{\psi}}$ has the \textit{Fourier expansion} in the sense that 
	$$\Lambda_{\widehat{\psi}}(a)= \sum_{u} \pi_{\sigma^{\varphi_N}}(u)a_u = \sum_{u} \Lambda_{\widehat{\psi}}(uE_{X\rtimes \R}(u^*a)) , \quad \text{where } a_u =  \Lambda_{\widehat{\psi}}(E_{X\rtimes \R}(u^*a)) .$$
Using this property, we can prove the following lemma. We omit the proof, since it is straightforward.

\begin{Lem}\label{lemma for free quantum group2}
	Let $\mathcal{M}_0 \subset \mathcal{M}$ be the C$^*$-subalgebra generated by $N_0$ and $X\rtimes \R$. Then one has
\begin{eqnarray*}
	\mathcal{M}_0 &=& \overline{\mathrm{span}}^{\rm norm}\{ a x \mid a\in N_0, \ x\in X\rtimes \R\} \\
	&=& \overline{\mathrm{span}}^{\rm norm}\{ x a \mid a\in N_0, \ x\in X\rtimes \R\}.
\end{eqnarray*}
\end{Lem}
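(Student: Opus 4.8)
The plan is to show that the norm‑closed linear span $S:=\overline{\mathrm{span}}^{\rm norm}\{ax\mid a\in N_0,\ x\in X\rtimes\R\}$ is a C$^*$‑subalgebra of $\mathcal{M}$ which contains $N_0$ and $X\rtimes\R$. Since $\mathcal{M}_0$ is by definition a norm‑closed algebra containing $N_0$ and $X\rtimes\R$, the inclusion $S\subset\mathcal{M}_0$ is automatic, so this yields $S=\mathcal{M}_0$; the second description then follows immediately, because $\overline{\mathrm{span}}^{\rm norm}\{xa\mid a\in N_0,\ x\in X\rtimes\R\}=S^{\ast}=\mathcal{M}_0^{\ast}=\mathcal{M}_0$. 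Note that $N_0\subset S$ and $X\rtimes\R\subset S$ since $1\in N_0$ and $1\in X\rtimes\R$, and that $S$ is visibly a left $N_0$‑module and a right $(X\rtimes\R)$‑module. Hence, to see that $S$ is a $\ast$‑subalgebra it suffices to prove the single statement that $xa\in S$ for all $a\in N_0$ and $x\in X\rtimes\R$: indeed this gives $(ax)^{\ast}=x^{\ast}a^{\ast}\in S$, and for products $(a_1x_1)(a_2x_2)=a_1\,(x_1a_2)\,x_2\in N_0\cdot S\cdot(X\rtimes\R)\subset S$.

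So the whole point is the commutation statement $xa\in N_0\cdot(X\rtimes\R)$. By norm density of the span of the fixed basis $\{u_{i,j}^x\}$ in $N_0$ and norm closedness of $S$, I may assume $a=u$ is one of these basis elements, so that $\sigma^{\varphi_N}_t(u)=\mu^{it}u$ for some $\mu>0$. The key observation is that there is a normal $\ast$‑automorphism $\gamma_\mu$ of $X\rtimes\R$ — the scaling automorphism fixing $X$ pointwise and sending $\lambda_t\mapsto\mu^{it}\lambda_t$ — and that $xu=u\,\gamma_\mu(x)$ for every $x\in X\rtimes\R$. Concretely, $\gamma_\mu$ is implemented by conjugation by the unitary on $L^2(X)\otimes L^2(\R)$ given by multiplication by $s\mapsto\mu^{-is}$ in the $L^2(\R)$‑variable (the same kind of unitary as $W$ and $V$ in the proof of Lemma \ref{standard representation}); a direct computation using $(\pi_\alpha(u)\xi)(s)=\alpha_{-s}(u)\xi(s)=\mu^{-is}(u\otimes1_X)\xi(s)$ shows both that $\gamma_\mu$ carries $X\rtimes\R$ onto itself and that $xu=u\,\gamma_\mu(x)$. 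For this last identity one checks it first on the generators of $X\rtimes\R$: for $x\in X$ both sides equal $ux$ (as $N$ and $X$ commute in $M$ and $\gamma_\mu|_X=\mathrm{id}$), and for $x=\lambda_t$ one has $\lambda_tu\lambda_t^{\ast}=\alpha_t(u)=\mu^{it}u$, hence $\lambda_tu=\mu^{it}u\lambda_t=u\,\gamma_\mu(\lambda_t)$; then, since $x\mapsto xu$ and $x\mapsto u\,\gamma_\mu(x)$ are $\sigma$‑weakly continuous and $\gamma_\mu$ is multiplicative, the set $\{x\in X\rtimes\R: xu=u\,\gamma_\mu(x)\}$ is a $\sigma$‑weakly closed subalgebra containing $X$ and all $\lambda_t$, hence equals $X\rtimes\R$. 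Therefore $xu=u\,\gamma_\mu(x)\in N_0\cdot(X\rtimes\R)\subset S$, as required.

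The step I expect to require the most care is exactly this last one: because $X\rtimes\R$ is a \emph{von Neumann} algebra, not a C$^*$‑algebra norm‑generated by $X$ and $\{\lambda_t\}$, one cannot obtain $xa\in S$ simply by approximating $x$ in norm by finite words in $X$ and the $\lambda_t$; this is why the argument is routed through the normal automorphism $\gamma_\mu$ together with a $\sigma$‑weak density and continuity argument rather than a direct norm approximation. (Alternatively one could deduce $xu\in N_0\cdot(X\rtimes\R)$ from the Fourier expansion established just above applied to $xu\in\mathcal{M}$, using that $E_{X\rtimes\R}(v^{\ast}xu)\in X\rtimes\R$ for every basis element $v$; but the scaling‑automorphism argument is cleaner and avoids having to check which elements lie in $\mathfrak{n}_{\widehat\psi}$.)
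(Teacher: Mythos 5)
Your proof is correct, and for the one nontrivial step it takes a genuinely different route from the paper. Both arguments ultimately rest on the commutation $(X\rtimes \R)\,u \subset u\,(X\rtimes \R)$ for a modular eigenvector $u$ (i.e.\ a basis element $u^x_{i,j}$), but they establish it differently. The paper first notes the purely algebraic identity $u(X\rtimes_{\rm alg}\R)=(X\rtimes_{\rm alg}\R)u$ and then, to pass from $X\rtimes_{\rm alg}\R$ to the whole von Neumann algebra $X\rtimes\R$, proves via the Fourier expansion (the conditional expectation $E_{X\rtimes\R}$ and the dual weight computations preceding the lemma) that $\sum_{u\in\mathcal{F}}u(X\rtimes\R)$ is $\sigma$-weakly closed; it then also has to treat $u^*$ separately to relate the two spans. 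You instead exhibit the exact relation $xu=u\gamma_\mu(x)$ for \emph{all} $x\in X\rtimes\R$, where $\gamma_\mu$ is the dual-action-type automorphism fixing $X$ and scaling $\lambda_t$ by $\mu^{it}$; the equalizer of the two $\sigma$-weakly continuous maps $x\mapsto xu$ and $x\mapsto u\gamma_\mu(x)$ is a $\sigma$-weakly closed subalgebra containing the generators, hence all of $X\rtimes\R$. This is cleaner: it avoids the Fourier expansion entirely, gives the stronger exact identity $(X\rtimes\R)u=u(X\rtimes\R)$ in one stroke, and lets you obtain the second description of $\mathcal{M}_0$ simply by taking adjoints of the first, rather than rerunning the argument for $u^*$. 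What the paper's route buys in exchange is that the Fourier-expansion formalism is set up anyway for later use in Section 4, so the closedness claim comes almost for free there. Two cosmetic points: the unitary implementing $\gamma_\mu$ should be multiplication by $s\mapsto\mu^{is}$ rather than $\mu^{-is}$ if you want $\lambda_t\mapsto\mu^{it}\lambda_t$ (with the paper's convention $(\lambda_t\xi)(s)=\xi(s-t)$, conjugation by multiplication by $\mu^{-is}$ yields $\mu^{-it}\lambda_t$); and it is worth saying explicitly that $\gamma_\mu$ is surjective onto $X\rtimes\R$ (clear, since it permutes the generators up to phases, or since it is the dual action at the point $\mu$), as you use $u\gamma_\mu(X\rtimes\R)\subset N_0\cdot(X\rtimes\R)$. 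Neither affects the validity of the argument.
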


\subsection{\bf Proof of Theorem \ref{thmA}}\label{Proof for free quantum group factors}

	Let $\G$ be a compact quantum group with the Haar state $h$ and put $N_0:=C_{\rm red}(\G)\subset L^\infty(\G) =:N$ and $\varphi_N:=h$. Let $(B,\varphi_B)$ be a von Neumann algebra with a faithful normal state. 
We keep the notation from Subsections \ref{WCMAP with respect to a state produces approximation maps on continuous cores} and \ref{Relative weakly compact actions on continuous cores}, such as $M$,  $\varphi$, $\widetilde{B}$, $\widetilde{M}$, $\Tr$, $p$, $A$, $\mathcal{H}$, $\pi_{\mathcal{H}}$, $\theta_{\mathcal{H}}$, $\mathcal{M}$, except for the Hilbert space $H$ (which is used just below in a different method). Assume that $\Tr|_{\widetilde{B}}$ is semifinite. 
Recall that by Lemma \ref{standard representation}, $\mathcal{M}=\R \ltimes (N\ovt X)$ with the standard representation $L^2(\mathcal{M})=L^2(\R)\otimes L^2(N)\otimes L^2(X)$. 
Write as $\pi:=\pi_{\mathcal{H}}$ and $\theta:=\theta_{\mathcal{H}}$ for simplicity, and we sometimes omit $\pi$ and $\theta$ by regarding $\widetilde{M}, \widetilde{M}^\op  \subset \mathcal{M}$. Using Proposition \ref{lemma for relative tensor}, we put
\begin{align*}
	& H:=L^2(\mathcal{M})\otimes_{X} L^2(\mathcal{M})= L_\ell^2(\R)\otimes L_\ell^2(N) \otimes L^2(X) \otimes L_r^2(N) \otimes L_r^2(\R), \\ 
	& K:=L^2(\mathcal{M})\otimes_{(N\ovt X)} L^2(\mathcal{M})=L_\ell^2(\R)\otimes L^2(N) \otimes L^2(X)\otimes L_r^2(\R),
\end{align*}
and we denote by $\pi_H,$ $\rho_H$, $\pi_K$ and $\rho_K$ corresponding left and right actions of $\mathcal{M}$. Here we are using symbols $\ell$ and $r$ for $L^2(\R)$ and $L^2(N)$, so that $\pi_H$ and $\pi_K$ act on $L^2_\ell(\R)\otimes L_\ell^2(N)\otimes L^2(X)$ and $L^2_\ell(\R)\otimes L_\ell^2(N)\otimes L^2(X)$ respectively, and $\theta_H$ and $\theta_K$ act on $L^2(X)\otimes L_r^2(N)\otimes L^2_r(\R)$ and $L^2(N)\otimes L^2(X) \otimes L^2_r(\R)$ respectively. We denote by $\nu_{K,H}$ the corresponding $\ast$-homomorphism as $\mathcal{M}$-bimodules, which is \textit{not} bounded in general. 

	In this setting, we prove two lemmas. The first one uses bi-exactness of quantum groups, which corresponds to \cite[Lemma 4.1.3]{Is12}, while the second one uses Popa's intertwining techniques which corresponds to \cite[Lemma 4.1.2]{Is12}\cite[Lemma 4.4]{Is13}. See also \cite[Subsections 3.2 and 3.5]{PV12} for the origins of them. 

\begin{Lem}\label{lemma for free quantum group3}
	Assume that $\widehat{\mathbb{G}}$ is bi-exact with a u.c.p.\ map $\Theta$ as in the definition of bi-exactness. Let $\mathcal{M}_0$ be the C$^*$-algebra generated by $N_0$ and $\R \ltimes X$. Then $\Theta$ can be extended to a u.c.p.\ map 
	$$\widetilde{\Theta}\colon \mathrm{C}^*\{ \pi_H(\mathcal{M}_0), \theta_H(\mathcal{M}_0) \} \to \B(K) $$
which satisfies, using the flip $\Sigma_{12}\colon K\simeq L^2(N)\otimes L_\ell^2(\R) \otimes L^2(X)\otimes L_r^2(\R)$,
	$$\Sigma_{12}(\widetilde{\Theta}(\pi_H(xa) \theta_H(b^\op  y^\op )) - \pi_K(xa) \theta_K(b^\op  y^\op ))\Sigma_{12}\in  \K(L^2(N)) \otm \B(L_\ell^2(\R)\otimes L^2(X)\otimes L_r^2(\R)))$$
for any $a,b \in N_0$ and $x,y \in \R\ltimes X$.
\end{Lem}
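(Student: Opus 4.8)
The plan is to transplant the bi-exactness map $\Theta$ through the explicit tensor-product descriptions of $H$ and $K$ furnished by Proposition \ref{lemma for relative tensor}, exactly in the spirit of \cite[Lemma 4.1.3]{Is12} and \cite[Subsection 3.2]{PV12}, while keeping track of the extra $\R$-direction and of the modular twist on the $N$-leg.

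First, by Proposition \ref{lemma for relative tensor} (with $B=X$, resp.\ with the role of ``$B$'' played by $N\ovt X$ and a trivial first factor) I would write $H=L^2_\ell(\R)\otimes L^2_\ell(N)\otimes L^2(X)\otimes L^2_r(N)\otimes L^2_r(\R)$ and $K=L^2_\ell(\R)\otimes L^2(N)\otimes L^2(X)\otimes L^2_r(\R)$, and read off on which legs $\pi_H,\theta_H,\pi_K,\theta_K$ act. The crucial point is that the $N$-parts agree in both pictures: for $a\in N_0$ the operator $\pi_H(a)$ and the operator $\pi_K(a)$ are one and the same $\pi_{\sigma^{\varphi_N}}(a)$, which by Lemma \ref{lemma for free quantum group1} lies in $N_0\otm C_b(\R)$ and which, on the dense Hopf $*$-algebra, is a literal elementary tensor $a\otimes f_a$; likewise $\theta_H(b^\op)$ and $\theta_K(b^\op)$ both arise from $\theta_{\sigma^{\varphi_N}}(b^\op)\in N_0^\op\otm C_b(\R)$, and $\pi_H,\theta_H$ resp.\ $\pi_K,\theta_K$ restricted to $\R\ltimes X$ are literally the same operators on $L^2_\ell(\R)\otimes L^2(X)$ resp.\ on $L^2(X)\otimes L^2_r(\R)$. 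Moving the two $L^2(N)$-legs of $H$ to the front by a unitary $W$, this lets me produce a C$^*$-algebra $\mathcal{D}$ on $L^2_\ell(\R)\otimes L^2(X)\otimes L^2_r(\R)$, generated by the functions $f_a,g_b$ and by the (common) images of $\R\ltimes X$, with the two properties that, using $\mathcal{M}_0=\overline{\mathrm{span}}\{ax:a\in N_0,\ x\in\R\ltimes X\}$ (Lemma \ref{lemma for free quantum group2}) and norm-closedness, $W\,\mathrm{C}^*\{\pi_H(\mathcal{M}_0),\theta_H(\mathcal{M}_0)\}\,W^*\subset N_0\otm N_0^\op\otm\mathcal{D}$, and the same $\mathcal{D}$ embeds into $\B(\Sigma_{12}K)$ as the identity on the $L^2(N)$-leg.

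I would then set $\widetilde\Theta:=\Sigma_{12}^*\circ(\Theta\otimes\mathrm{id}_{\mathcal{D}})\circ\mathrm{Ad}\,W$, where $\Theta\otimes\mathrm{id}_{\mathcal{D}}\colon(N_0\otm N_0^\op)\otm\mathcal{D}\to\B(L^2(N))\otm\mathcal{D}\subset\B(\Sigma_{12}K)$ is the extension of $\Theta$ by the identity on a minimal tensor product, which is automatically u.c.p.; hence $\widetilde\Theta$ is u.c.p.\ and maps into $\B(K)$. For the compactness formula it is enough, by norm-density of the dense Hopf $*$-algebra in $N_0$, continuity of all maps, and norm-closedness of $\K(L^2(N))\otm\B(L^2_\ell(\R)\otimes L^2(X)\otimes L^2_r(\R))$, to treat $a,b$ eigenvectors. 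For such $a,b$ and $x,y\in\R\ltimes X$ one computes $W\,\pi_H(xa)\theta_H(b^\op y^\op)\,W^*=a\otimes b^\op\otimes d$ for an explicit $d\in\mathcal{D}$ assembled from $f_a,g_b$ and the $\R\ltimes X$-parts of $x,y$, and — thanks to the coincidence of the $N$-parts — $\Sigma_{12}\,\pi_K(xa)\theta_K(b^\op y^\op)\,\Sigma_{12}^*=(ab^\op)\otimes d$ with the \emph{same} $d$, where $ab^\op$ denotes the product of $a$ and $b^\op$ as operators on $L^2(N)$. Since $(\Theta\otimes\mathrm{id}_{\mathcal{D}})(a\otimes b^\op\otimes d)=\Theta(a\otimes b^\op)\otimes d$, subtraction gives $\Sigma_{12}(\widetilde\Theta(\pi_H(xa)\theta_H(b^\op y^\op))-\pi_K(xa)\theta_K(b^\op y^\op))\Sigma_{12}=(\Theta(a\otimes b^\op)-ab^\op)\otimes d$, which lies in $\K(L^2(N))\otm\B(L^2_\ell(\R)\otimes L^2(X)\otimes L^2_r(\R))$ by Definition \ref{bi-exact definition}.

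The main obstacle I expect is purely organizational: keeping the five (resp.\ four) tensor legs and the left/right actions straight, and — the one genuinely delicate point — verifying that the modular twist $\pi_{\sigma^{\varphi_N}}$ (and $\theta_{\sigma^{\varphi_N}}$) occurring in the $H$-picture coincides with the one in the $K$-picture, so that after applying $\Theta\otimes\mathrm{id}_{\mathcal{D}}$ the ``$\mathcal{D}$-part'' $d$ is literally unchanged and the difference collapses onto $(\Theta(a\otimes b^\op)-ab^\op)\otimes d$. One also has to make sure $\mathcal{D}$ is chosen large enough to absorb the $C_b(\R)$-parts of $\pi_H(N_0)$ and $\theta_H(N_0^\op)$ for all $a,b\in N_0$, which again reduces to the eigenvector case by norm-closedness.
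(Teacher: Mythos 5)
Your proposal is correct and follows essentially the same route as the paper: identify $H$ and $K$ via Proposition \ref{lemma for relative tensor}, reduce to eigenvectors so that $\pi_H(a)$ and $\theta_H(b^\op)$ become elementary tensors with matching $C_b(\R)$-legs in the $H$- and $K$-pictures, apply $\Theta\otimes\id$ on the remaining legs, and conclude by Lemma \ref{lemma for free quantum group2} and norm density. The only cosmetic difference is that you package the non-$N$ legs into a C$^*$-algebra $\mathcal{D}$, whereas the paper simply tensors $\Theta$ with the identity of $\B(L_\ell^2(\R)\otimes L^2(X)\otimes L_r^2(\R))$ and keeps $\pi_K(x)$, $\theta_K(y^\op)$ outside via the multiplicative domain; both are valid.
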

\begin{proof}
	By applying flip maps, we identify:
\begin{align*}
	& H= L_\ell^2(N)\otimes L_r^2(N) \otimes L_\ell^2(\R) \otimes L^2(X)  \otimes L_r^2(\R), \\ 
	& K= L^2(N) \otimes L_\ell^2(\R)\otimes  L^2(X)\otimes L_r^2(\R).
\end{align*}
We define a u.c.p.\ map $\widetilde{\Theta}$ by 
	$$\widetilde{\Theta}:= \Theta \otimes \id_{L_\ell^2(\R)} \otimes \id_{L^2(X)}\otimes \id_{L_r^2(\R)} \colon N_0 \otm N_0^\op  \otm \B(L_\ell^2(\R) \otimes L^2(X)  \otimes L_r^2(\R)) \to \B(K).$$ 
Observe that by Lemma \ref{lemma for free quantum group1}, $\pi_H(\mathcal{M}_0)$ and $\rho_H(\mathcal{M}_0)$ are contained in $N_0 \otm N_0^\op  \otm \B(L_\ell^2(\R) \otimes L^2(X)  \otimes L_r^2(\R))$. 
Recall that for $a,b \in N$, $\pi_H(a)$ and $\theta_H(b^\op )$ are given by $\pi_{\sigma^{\varphi_N}}(a)$ on $L_\ell^2(\R) \otimes L_\ell^2(N)$ and $\theta_{\sigma^{\varphi_N}}(b^\op )$ on $L_r^2(N)\otimes L_r^2(\R)$. So if $a$ and $b$ are eigenvectors, they are of the form $\pi_H(a) = f \otimes a$ and $\theta_H(b^\op ) = b^\op  \otimes g$ for some $f,g \in C_{b}(\R)$ by Lemma \ref{lemma for free quantum group1}. 
It then holds that for any $x,y \in \R\ltimes X$,
\begin{eqnarray*}
	&&\widetilde{\Theta}(\pi_H(xa) \theta_H(b^\op  y^\op )) - \pi_K(xa) \theta_K(b^\op  y^\op ) \\
	&&\widetilde{\Theta}(\pi_H(x)\pi_H(a) \theta_H(b^\op ) \theta_H( y^\op )) - \pi_K(x)\pi_K(a) \theta_K(b^\op )\theta_K( y^\op ) \\
	&=& \widetilde{\Theta}(\pi_H(x) (a \otimes b^\op \otimes f \otimes 1_{L^2(X)} \otimes g) \theta_H( y^\op )) - \pi_K(x) (ab^\op \otimes f \otimes 1_{L^2(X)} \otimes g)\theta_K( y^\op ) \\
	&=& \pi_K(x) ((\Theta(a \otimes b^\op )-ab^\op )\otimes f \otimes 1_{L^2(X)} \otimes g) \theta_K( y^\op )).
\end{eqnarray*}
Since $\Theta(a \otimes b^\op )-ab^\op  \in \K(L^2(N))$ and $\pi_K(x),\theta_K( y^\op ) \in \C 1_N \otm \B(L_\ell^2(\R)\otimes  L^2(X)\otimes L_r^2(\R))$, the last term above is contained in $\K(L^2(N)) \otm \B(L_\ell^2(\R)\otimes L^2(X)\otimes L_r^2(\R)))$. Then by Lemma \ref{lemma for free quantum group2}, we obtain the conclusion.
\end{proof}

\begin{Lem}\label{lemma for main thm}
	Let $\Omega$ be a state on $\B(K)$ satisfying for any $x\in \widetilde{M}$ and $a\in \mathcal{U}(A)$,
	$$\Omega(\pi_K( \pi(x) ))= \Tr(pxp) \quad \text{and} \quad \Omega( \pi_K( \pi(a) \theta(\bar{a})))=1.$$
If $A \not\preceq_{\widetilde{M}} \widetilde{B}$, then using the flip $\Sigma_{12}\colon K\simeq L^2(N)\otimes L_\ell^2(\R) \otimes L^2(X)\otimes L_r^2(\R)$, it holds that
	$$\Omega\circ\Ad(\Sigma_{12})(\K(L^2(N)) \otm \B(L_\ell^2(\R)\otimes L^2(X) \otimes L_r^2(\R) ))=0.$$
\end{Lem}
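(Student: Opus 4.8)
The plan is to show that the hypothesis $A\not\preceq_{\widetilde M}\widetilde B$ forces the state $\Omega$ to kill the ideal of compact operators in the first leg, by exploiting the fact that $\Omega$ is $\mathcal U(A)$-central (in the precise sense encoded by the two displayed conditions) together with a standard "averaging a compact by unitaries that go weakly to zero" argument. First I would recall that because $\widetilde B\subset\widetilde M$ is with operator valued weight $E_{\widetilde B}$ and $A$ is finite (it sits in $p\widetilde M p$ with $\Tr(p)<\infty$), Theorem \ref{intertwining thm} applies: $A\not\preceq_{\widetilde M}\widetilde B$ gives a net $(w_k)_k$ of unitaries in $\mathcal U(A)$ with $E_{\widetilde B}(b^*w_k a)\to0$ $\ast$-strongly for all $a,b\in p\widetilde M$. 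I would then translate this into a statement on the Hilbert space $L^2(N)$ appearing in the identification $\Sigma_{12}K\simeq L^2(N)\otimes L_\ell^2(\R)\otimes L^2(X)\otimes L_r^2(\R)$: the point is that the left $N$-action on $L^2(N)$ (coming from $\pi$) intertwined with the right $N^\op$-action is, via Proposition \ref{lemma for relative tensor} and Lemma \ref{standard representation}, exactly the piece of $\pi_K\circ\pi$ that "sees" compacts in the first leg, and $E_{\widetilde B}$ is the operator-valued weight controlling $B$-boundedness there.

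Concretely, the key step is: for any rank-one operator $e_{\xi,\eta}\in\K(L^2(N))$ with $\xi,\eta$ chosen from a dense set of left-$\varphi_B$-bounded vectors of the form $\Lambda$-images of $\mathfrak n_{E_{\widetilde B}}\cap\mathfrak n_{\widehat\varphi}$, one computes that $\pi_K(\pi(w_k))^*\,(e_{\xi,\eta}\otimes 1)\,\pi_K(\pi(w_k))$, after conjugating by $\Sigma_{12}$, has the form built out of $L_\eta^*\pi(w_k)L_\xi = E_{\widetilde B}(\eta^*w_k\xi)$-type matrix coefficients, which tend to $0$ $\ast$-strongly by the previous paragraph. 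Since $\Omega$ is $\mathcal U(A)$-central — this is where the second displayed hypothesis $\Omega(\pi_K(\pi(a)\theta(\bar a)))=1$ is used, via a Cauchy--Schwarz/Powers--St\o rmer argument to deduce $\Omega\circ\Ad(\pi_K(\pi(w_k)\theta(\bar w_k)))=\Omega$ asymptotically, exactly as in the passage from condition (ii) to the asymptotic invariance in Proposition \ref{lem relative weakly compact action} — we get
$$\Omega\circ\Ad\Sigma_{12}(e_{\xi,\eta}\otimes 1_{L_\ell^2(\R)\otimes L^2(X)\otimes L_r^2(\R)}) = \lim_k \Omega\circ\Ad\Sigma_{12}\big(\pi_K(\pi(w_k))^*(e_{\xi,\eta}\otimes1)\pi_K(\pi(w_k))\big)=0,$$
using that $\theta(\bar w_k)$ commutes with $e_{\xi,\eta}\otimes 1$ (the right action lives in the last three legs) and that $\|\Omega\|=1$ bounds the error terms. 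By density of such $\xi,\eta$ and by linearity and norm-continuity one upgrades this from rank-one operators to all of $\K(L^2(N))$, and then $\K(L^2(N))\otimes_{\min}\B(\cdots)$ is generated as a closed two-sided ideal by $\K(L^2(N))\otimes 1$ — more precisely any element of $\K(L^2(N))\ovt\B(L_\ell^2(\R)\otimes L^2(X)\otimes L_r^2(\R))$ is a $\|\cdot\|$-limit of sums $T(e_{\xi,\eta}\otimes 1)S$ with $T,S\in\B(K)$, so $\Omega$ vanishes there too by the Cauchy--Schwarz inequality for the state $\Omega$.

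The main obstacle I anticipate is bookkeeping the identifications: one must be careful that under $\Sigma_{12}$ the first leg $L^2(N)$ is precisely the "left-right $N$-bimodule" leg of $H$ compressed to $K$, and that the matrix coefficients that appear after conjugating a first-leg compact by $\pi_K(\pi(w_k))$ really are governed by $E_{\widetilde B}$ (and not by some other expectation), which requires invoking the relation $L_{\Lambda_{\widehat\varphi}(y)}^*\,z\,L_{\Lambda_{\widehat\varphi}(x)}=E_{\widetilde B}(y^*zx)$ recorded before Proposition \ref{lemma for relative tensor}, together with the explicit form of the right action $\theta_K$ in Lemma \ref{standard representation}. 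A secondary subtlety is that $\Omega$ is only a state on $\B(K)$, not normal, so one cannot pass limits inside $\Omega$ naively; one gets around this by the uniform norm bound $\|\Omega\|=1$ and by the fact that the difference $\pi_K(\pi(w_k))^*(e_{\xi,\eta}\otimes1)\pi_K(\pi(w_k)) - \Ad(\pi_K(\pi(w_k)\theta(\bar w_k)))(e_{\xi,\eta}\otimes1)$ is zero (the two conjugations agree since $\theta(\bar w_k)$ commutes with the first-leg operator), so the only place a limit is taken is in the $\ast$-strong convergence of the finitely many matrix coefficients, against fixed vectors, which is legitimate.
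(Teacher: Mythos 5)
Your overall strategy---use Theorem \ref{intertwining thm} to produce unitaries $(w_k)\subset\mathcal U(A)$ with vanishing $E_{\widetilde B}$-coefficients, and combine this with the exact $\Ad(\pi_K(\pi(a)\theta(\bar a)))$-invariance of $\Omega$ coming from $\Omega(\pi_K(\pi(a)\theta(\bar a)))=1$---is the right starting point, and your observation that $\pi_K(\theta(\bar w_k))$ commutes with first-leg compacts is correct. But the central step fails: from
$$\Omega(e_{\xi,\eta}\otimes 1)=\Omega\bigl(\pi_K(\pi(w_k))^*(e_{\xi,\eta}\otimes 1)\pi_K(\pi(w_k))\bigr)$$
you cannot conclude that the right-hand side tends to $0$. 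The conjugated operators converge to $0$ only in the weak/strong operator topology (this is what the $\ast$-strong vanishing of the matrix coefficients $E_{\widetilde B}(b^*w_ka)$ buys you), not in norm, and $\Omega$ is merely a state on $\B(K)$, hence only norm-continuous. Your claim that ``the only place a limit is taken is in the $\ast$-strong convergence of finitely many matrix coefficients against fixed vectors'' is exactly where the argument breaks: there are no fixed vectors here, because $\Omega$ is not a vector state and not normal; the mass of $\Omega$ can escape to infinity along the Fourier decomposition of $L^2(N)$ and still be detected by $\Omega$. A secondary issue is that the norm-smallness one would need for an almost-orthogonality argument (e.g.\ $\|E_{\widetilde B}(b^*w_ka)\|_\infty$ small) is not available: the intertwining criterion only gives $\sigma$-$\ast$-strong convergence, which is why any quantitative use of it requires first cutting by a $\Tr$-finite projection $q\in\widetilde B$ and working with $\|\cdot\|_{2,\Tr}$.

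The paper's proof supplies precisely the mechanism your proposal is missing. It argues by contradiction: if $\Omega$ assigns mass $>\delta$ to some isotypic projection $1_\ell\otimes P_{\mathcal F}\otimes 1_r$ (with $\mathcal F\subset\mathrm{Irred}(\G)$ finite), it first replaces $\Omega$ by $\Omega_q$ for a suitable $\Tr$-finite $q\in\widetilde B$, then uses $A\not\preceq_{\widetilde M}\widetilde B$ to approximate some $a\in\mathcal U(A)$ in the seminorms $\|\cdot\|_{2,u}$ ($u\in\mathcal F$) by an element $v=\sum u b_u$ whose Fourier support avoids $\mathcal F\overline{\mathcal F}$. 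The fusion rules then force $\theta(\bar a)\pi(v)(1_\ell\otimes P_{\mathcal F}\otimes 1_r)$ to land under $1_\ell\otimes P_{\mathcal S\mathcal F}\otimes 1_r$ with $\mathcal S\mathcal F\cap\mathcal F=\emptyset$, and the invariance identity $\|\pi(b)\theta(\bar b)-T\|_{2,\Omega}=\|1-T\|_{2,\Omega}$ converts this \emph{exact} orthogonality into the mass-doubling estimate $\Omega_q(1_\ell\otimes P_{\mathcal F\cup\mathcal S\mathcal F}\otimes 1_r)>2\delta^2$; iterating contradicts $\Omega_q(1)\le 1$. This disjointification-and-doubling step is not a routine completion of your argument---it is the heart of the proof---so the proposal as written has a genuine gap.
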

\begin{proof}
	Since $\Omega$ is a state, by the Cauchy--Schwarz inequality, we have only to show that $\Omega\circ\Ad(\Sigma_{12})(\K(L^2(N)) \otm \C1_{L_\ell^2(\R)\otimes L^2(X) \otimes L_r^2(\R)} )=0$. 

	In this setting we can follow the proof of \cite[Lemma 4.4]{Is13}. Indeed suppose by contradiction that there exist $\delta>0$ and a finite subset $\mathcal{F}\subset \mathrm{Irred}(\G)$ such that
\begin{equation*}
	\Omega(1_{L_\ell^2(\R)}\otimes P_{\mathcal{F}} \otimes 1_{L^2(X)\otimes L_r^2(\R)})>\delta,
\end{equation*}
where $P_\mathcal{F}$ is the orthogonal projection onto $\sum_{x\in \mathcal{F}}H_x \otimes H_{\bar x}$. 
Then the argument in \cite[Lemma 4.4]{Is13} works by replacing $\|\, \cdot \, \|$ with $\Omega$. Hence we omit the proof.
\end{proof}

	Now we are in position to prove the main theorem. We actually prove the following more general theorem. Theorem \ref{thmA} then follows immediately with Theorem \ref{weakly compact action on core}.

\begin{Thm}\label{theorem for main thm}
	Let $A\subset p\widetilde{M}p$ be a von Neumann subalgebra and $\mathcal{G} \leq \mathcal{N}_{p\widetilde{M}p}(A)$ a subgroup. Assume the following three conditions.
\begin{itemize}
	\item[$\rm (A)$] The group $\mathcal{G}$ acts on $A$ by conjugation as a weakly compact action for $(\widetilde{M}, \pi, \theta, \mathcal{M})$.
	\item[$\rm (B)$] The quantum group $\widehat{\mathbb{G}}$ is bi-exact and centrally weakly amenable.
	\item[$\rm (C)$] We have $A\not\preceq_{\widetilde{M}}\widetilde{B}$.
\end{itemize}
Then there is a $(\mathcal{U}(A)\cup \mathcal{G})$-central state on $p\langle \widetilde{M}, \widetilde{B}\rangle p$ which coincides with $\Tr$ on $p\widetilde{M}p$. In particular the von Neumann algebra generated by $A$ and $\mathcal{G}$ is amenable relative to $\widetilde{B}$.
\end{Thm}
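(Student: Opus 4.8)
The plan is to follow the proof of \cite[Theorem B]{Is13}, but reorganised so that only bi-exactness of $\widehat{\mathbb G}$ is used, by threading the construction of the central state through the three Hilbert spaces $L^2(\mathcal M)$, $K$, $H$ and the intermediate C$^*$-algebra $\mathcal M_0$. The ``in particular'' clause is the standard consequence of the characterisation of relative amenability recalled in Theorem \ref{relative amenable theorem1}: a $(\mathcal U(A)\cup\mathcal G)$-central state on $p\langle\widetilde M,\widetilde B\rangle p$ whose restriction to $p\widetilde M p$ equals $\Tr$ is automatically central under the whole of $\mathrm W^*(A,\mathcal G)$, because that restriction is a trace; so it suffices to produce such a state.

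First I would feed in hypothesis (A): by Proposition \ref{lem relative weakly compact action}(4) there is a state $\Omega_0$ on $\B(L^2(\mathcal M))$ with $\Omega_0(x)=\Tr(pxp)$ for $x\in\widetilde M$, with $\Omega_0(\pi(a)\theta(\bar a))=1$ for $a\in\mathcal U(A)$, and with $\Omega_0(\pi(u)\theta(\bar u)\mathcal J_{\mathcal M}\pi(u)\theta(\bar u)\mathcal J_{\mathcal M})=1$ for $u\in\mathcal G$. Next I would move this to $\B(K)$. Since $\mathcal M=\R\ltimes(N\ovt X)$ and $\R$ is amenable, Corollary \ref{weakly contained for amenable crossed product} gives $L^2(\mathcal M)\prec K$ as $\mathcal M$-bimodules, hence a $\ast$-homomorphism from the algebra generated by $\pi_K(\mathcal M)$ and $\theta_K(\mathcal M^\op)$ into $\B(L^2(\mathcal M))$ sending $\pi_K(m)\theta_K(n^\op)$ to the corresponding operator on $L^2(\mathcal M)$; composing with $\Omega_0$ and extending by Hahn--Banach yields a state $\Omega$ on $\B(K)$ with $\Omega(\pi_K(\pi(x)))=\Tr(pxp)$ for $x\in\widetilde M$ and $\Omega(\pi_K(\pi(a)\theta(\bar a)))=1$ for $a\in\mathcal U(A)$; carrying condition (iii) along the same transfer makes $\Omega$ $\mathcal G$-central in the appropriate sense.

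Now hypothesis (C) enters: since $A\not\preceq_{\widetilde M}\widetilde B$, Lemma \ref{lemma for main thm} applies to $\Omega$ and shows that $\Omega\circ\Ad(\Sigma_{12})$ annihilates $\K(L^2(N))\otm\B(L_\ell^2(\R)\otimes L^2(X)\otimes L_r^2(\R))$. Then hypothesis (B) enters through Lemma \ref{lemma for free quantum group3}: bi-exactness produces a u.c.p.\ map $\widetilde\Theta\colon\mathrm C^*\{\pi_H(\mathcal M_0),\theta_H(\mathcal M_0)\}\to\B(K)$ with $\widetilde\Theta(\pi_H(xa)\theta_H(b^\op y^\op))-\pi_K(xa)\theta_K(b^\op y^\op)$ landing, after conjugation by $\Sigma_{12}$, precisely in the ideal killed by $\Omega$, for all $a,b\in N_0$ and $x,y\in\R\ltimes X$. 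Hence $\Omega\circ\widetilde\Theta$ agrees on these elementary products with $T\mapsto\Omega(\pi_K(\cdot)\theta_K(\cdot))$, so after extending $\Omega\circ\widetilde\Theta$ by Hahn--Banach to a state $\widehat\Omega$ on $\B(H)$ we obtain a state that matches $\Tr$ on a $\sigma$-weakly dense $\ast$-subalgebra of $\pi_H(\widetilde M)$, satisfies $\widehat\Omega(\pi_H(a)\theta_H(\bar a))=1$ for $a\in\mathcal U(A)$, and (by the same computation on the generators of $\mathcal M_0$, using Lemma \ref{lemma for free quantum group1} to read off the spatial forms of $\pi_H$ and $\theta_H$) is $\mathcal G$-central on $\mathrm C^*\{\pi_H(\mathcal M_0),\theta_H(\mathcal M_0)\}$. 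Finally I would restrict $\widehat\Omega$ to $p\langle\widetilde M,\widetilde B\rangle p$, realised on $\pi_H(p)H$ via the identification $H=L^2(\mathcal M)\otimes_X L^2(\mathcal M)$; Lemma \ref{lemma for free quantum group2} ensures that $\mathcal M_0$ is norm-dense enough that this restriction is defined and still sees all of $p\langle\widetilde M,\widetilde B\rangle p$, and the usual Ozawa--Popa argument (the restriction being a norm-one state and $\Tr(p\cdot p)$ a normal state on $p\widetilde M p$) upgrades ``agreement on a dense subalgebra'' to ``$=\Tr$ on all of $p\widetilde M p$''. This produces the desired $(\mathcal U(A)\cup\mathcal G)$-central state.

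The main obstacle is coordinating the three transfers between $L^2(\mathcal M)$, $K$ and $H$: the natural comparison map $\nu_{K,H}$ is \emph{unbounded}, so the state cannot be pushed along it, and this is exactly the reason for routing the argument through $\mathcal M_0$ and the u.c.p.\ map $\widetilde\Theta$ (the roles of Corollary \ref{weakly contained for amenable crossed product} and Lemma \ref{lemma for free quantum group3}) rather than through $L^2(\mathcal M)$ directly. Verifying that $\mathcal M_0$ is at once small enough for bi-exactness to yield $\widetilde\Theta$ on it and large enough that the final restriction recovers all of $p\langle\widetilde M,\widetilde B\rangle p$, while keeping the $\mathcal G$-centrality --- a condition phrased with the modular conjugation $\mathcal J_{\mathcal M}$ --- intact through the two Hahn--Banach extensions, is where the bulk of the careful bookkeeping lies; this is precisely the point the author flags in the remarks preceding Lemma \ref{lemma for free quantum group3}.
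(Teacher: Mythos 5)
Your outline follows the paper's route faithfully up to and including the construction of $\widehat\Omega=\Omega\circ\widetilde\Theta$ on $\mathrm{C}^*\{\pi_H(\mathcal M_0),\theta_H(\mathcal M_0^\op)\}$ via Corollary \ref{weakly contained for amenable crossed product}, Lemma \ref{lemma for main thm} and Lemma \ref{lemma for free quantum group3}. But there is a genuine gap at the next step. After the Hahn--Banach extension of $\Omega\circ\widetilde\Theta$ to $\B(H)$, you only control the values of the resulting state on the norm closure of $\ast\text{-alg}\{\pi_H(\mathcal M_0),\theta_H(\mathcal M_0^\op)\}$, where $\mathcal M_0$ is built from the \emph{C$^*$-algebra} $N_0=C_{\rm red}(\G)$. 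The elements you actually need to evaluate --- $\pi_H(x)$ for general $x\in\widetilde M$, $\pi_H(a)\theta_H(\bar a)$ for $a\in\mathcal U(A)$, and the unitaries implementing $u\in\mathcal G\subset p\widetilde Mp$ --- live in the von Neumann algebras $\widetilde M$, $\widetilde M^\op$ and are \emph{not} in this C$^*$-algebra, so the Hahn--Banach extension says nothing about them. The proposed repair, that a norm-one state agreeing with the normal state $\Tr(p\,\cdot\,p)$ on a $\sigma$-weakly dense C$^*$-subalgebra must agree with it everywhere, is false in general (a Hahn--Banach extension can acquire a singular part that does not vanish on the dense subalgebra; think of a singular state on $L^\infty[0,1]$ restricting to Lebesgue integration on $C[0,1]$). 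In particular the conditions $\widehat\Omega(\pi_H(a)\theta_H(\bar a))=1$ and the $\mathcal G$-centrality cannot be "carried along" this way.

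What is missing is precisely the second application of \emph{central weak amenability} --- note that your proof never actually uses the "centrally weakly amenable" half of hypothesis (B). The paper's second Claim takes the net $(\psi_i)_i$ of finite-rank multipliers from Theorem \ref{weakly amenable}, forms $\Psi_i:=\id_{L^2_\ell(\R)}\otimes\psi_i\otimes\id_{L^2(X)}\otimes\psi_i^\op\otimes\id_{L^2_r(\R)}$, and uses two facts: the range of $\Psi_i$ lands inside $\mathrm{C}^*\{\pi_H(\mathcal M_0),\theta_H(\mathcal M_0^\op)\}$ (so $\|\widetilde\Omega\circ\Psi_i\|\le 1$ by your first step), and $\widetilde\Omega\circ\Psi_i\to\widetilde\Omega$ pointwise on $\ast\text{-alg}\{\pi_H(\widetilde M),\pi_H(\widetilde M^\op),\theta_H(\widetilde M^\op),\theta_H(\widetilde M)\}$ by a Cauchy--Schwarz estimate in $\|\cdot\|_{2,\widetilde\Omega}$. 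This proves that $\widetilde\Omega=\Omega\circ\nu_{K,H}$ itself is a state on the C$^*$-algebra generated by the full von Neumann algebras; only then does the Hahn--Banach extension to $\B(H)$ retain the exact values $\Tr(pxp)$ for all $x\in\widetilde M$ and the $\mathcal U(A)$- and $\mathcal G$-conditions, after which the restriction to $\langle\widetilde M,\widetilde B\rangle$ inside $\langle\mathcal M,\R\ltimes X\rangle\cap\theta(\widetilde M^\op)'$ goes through as you describe.
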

\begin{proof}
	By Remark \ref{rem relative weakly compact action}, we may assume $\mathcal{U}(A)\subset \mathcal{G}$. Recall from Lemma \ref{weakly contained for amenable crossed product} that as $\mathcal{M}$-bimodules,
$$ L^2(\mathcal{M}) \prec L^2(\mathcal{M}) \otimes_{(N\otimes X) } L^2(\mathcal{M}) =K ,$$
and we denote by $\nu$ the associated $\ast$-homomorphism. Let $(\xi_i)_i \subset L^2(\mathcal{M})$ be a net for the given weakly compact action of $\mathcal{G}$ and put a state $\Omega(X) := \mathrm{Lim}_i  \langle \nu(X) \xi_i, \xi_i \rangle_{L^2(\mathcal{M})}$ on $\mathrm{C}^*\{ \pi_K(\mathcal{M}), \theta_K(\mathcal{M}^\op ) \}$. 
Observe that it satisfies 
\begin{itemize}
	\item[$\rm (i)'$] $ \Omega(\pi_K( \pi(x) ))= \Tr(pxp)$ for any $x\in \widetilde{M}$;
	\item[$\rm (ii)'$] $ \Omega( \pi_K( \pi(a) \theta(\bar{a})))=1$ for any $a\in \mathcal{U}(A)$;
	\item[$\rm (iii)'$] $\Omega(\pi_K( \pi(u)\theta(\bar{u})) \theta_K(\pi(u^*)^\op \theta(u^\op )^\op )) =1$ for any $u \in \mathcal{G}$. 
\end{itemize}
Note that since $\mathcal{J}_{\mathcal{M}}\xi_i = \xi_i$, we also have $ \Omega(\theta_K( \pi(x)^\op  ))= \Tr(pxp)$ for any $x\in \widetilde{M}$. 
Write as $\nu_{K,H}$ the (not necessarily bounded) $\ast$-homomorphism for $\mathcal{M}$-bimodules $H$ and $K$. Here we claim that, using the bi-exactness of $\widehat{\G}$, the functional $\widetilde{\Omega}:=\Omega\circ \nu_{K,H}$ satisfies the following boundedness condition.
\begin{claim}
	The functional $\widetilde{\Omega}$ is bounded on $\mathrm{C}^*\{ \pi_H(\mathcal{M}_0), \theta_H(\mathcal{M}_0^\op ) \}$. 
\end{claim}
\begin{proof}[Proof of Claim]
	We first extend $\Omega$ on $\B(K)$ by the Hahn--Banach theorem. Then by Lemma \ref{lemma for main thm}, using assumption (C) and conditions $\rm (i)'$ and $\rm (ii)'$, one has
	$$\Omega\circ\Ad(\Sigma_{12})(\K(L^2(N)) \otm \B(L_\ell^2(\R)\otimes L^2(X) \otimes L_r^2(\R) ))=0.$$
Let $\Theta$ be a u.c.p.\ map for bi-exactness of $\widehat{\mathbb{G}}$ and denote by $\widetilde{\Theta}$ the extension given in Lemma \ref{lemma for free quantum group3}. Define a state on $\mathrm{C}^*\{ \pi_H(\mathcal{M}_0), \theta_H(\mathcal{M}_0^\op ) \}$ by $\widehat{\Omega}:= \Omega \circ \widetilde{\Theta}$. Then conclusions of Lemmas \ref{lemma for free quantum group3} and \ref{lemma for main thm} shows that for any $a,b \in N_0$ and $x,y \in \R\ltimes X$,
\begin{eqnarray*}
	\widehat{\Omega}(\pi_H(xa) \theta_H(b^\op  y^\op ))
	=\Omega\circ\widetilde{\Theta}(\pi_H(xa) \theta_H(b^\op  y^\op ))
	=\Omega(\pi_K(xa) \theta_K(b^\op  y^\op )).
\end{eqnarray*}
This means that the functional $\widetilde{\Omega}$ coincides with $\widehat{\Omega}$ on $\ast\text{-alg}\{ \pi_H(\mathcal{M}_0), \theta_H(\mathcal{M}_0^\op ) \}$, and hence it is a state on $\mathrm{C}^*\{ \pi_H(\mathcal{M}_0), \theta_H(\mathcal{M}_0^\op ) \}$ since so is $\widehat{\Omega}$.
\end{proof}

	We next show that the above boundedness extends partially, using the central weak amenability and a normality of $\widetilde{\Omega}$. This is the second use of the weak amenability. Recall that $\mathcal{M}$ is generated by a copy of $\widetilde{M}$ and $\widetilde{M}^\op $. We put $\widetilde{M}_0 \subset \mathcal{M}_0$ as the C$^*$-subalgebra generated by $\widetilde{B}$ and $N_0$, and note that Lemma \ref{lemma for free quantum group2} is applied to $\widetilde{M}_0$.
\begin{claim}
	The functional $\widetilde{\Omega}$ is bounded on 
	$$\mathrm{C}^*\{ \pi_H(\widetilde{M}), \pi_H(\widetilde{M}^\op ), \theta_H(\widetilde{M}^\op ), \theta_H(\widetilde{M}) \} =: \mathfrak{A},$$
where $\theta_H(\widetilde{M})$ should be understood as $\theta_H((\widetilde{M}^\op )^\op )$.
\end{claim}
\begin{proof}[Proof of Claim]
	Let $(\psi_i)_i$ be a net of finite rank normal c.c.\ maps on $N$ as in Theorem \ref{weakly amenable}. Up to convex combinations, we may assume $\psi_i\to \id_N$ in the point $\ast$-strong topology. For each $i$ we put $\psi_i^\op :=J_N\psi_i(J_N\, \cdot \, J_N)J_N$ as a normal c.c.\ map on $N^\op $. For each $i$, since $\psi_i$ commutes with the modular action, one can define a normal c.c.\ map on $\mathfrak{A}$ by
	$$\Psi_i:= \id_{L^2_\ell(\R)}\otimes\psi_i \otimes\id_{L^2(X)}\otimes\psi_i^\op \otimes \id_{L^2_r(\R)} .$$
Observe that the restriction of $\Psi_i$ on $\pi_H(\widetilde{M})$ defines a normal c.c.\ map $\widetilde{\psi}_i\colon \widetilde{M}\to \widetilde{M}_0$ (use Lemma \ref{lemma for free quantum group2}). The same holds for $\theta_H(\widetilde{M}^\op )$ and define $\widetilde{\psi}_i^\op $ similarly. 
Then with the formula $\|\pi_H(z)\|_{2,\widetilde{\Omega}} = \|zp\|_{2,\Tr} = \|\theta_H(\bar{z})\|_{2,\widetilde{\Omega}}$ for $z\in \widetilde{M}$ and by the Cauchy--Schwarz inequality, it holds that for any $a,b,x,y\in \widetilde{M}$
\begin{eqnarray*}
	&& \left|\widetilde{\Omega}\circ \Psi_i (\pi_H(ax^\op ) \theta_H(b^\op  y)) - \widetilde{\Omega} (\pi_H(ax^\op ) \theta_H(b^\op y)) \right| \\
	&=& \left|\widetilde{\Omega} (\pi_H(\widetilde{\psi}_i(a)x^\op ) \theta_H(\widetilde{\psi}_i^\op (b^\op ) y)) - \widetilde{\Omega} (\pi_H(ax^\op ) \theta_H(b^\op y)) \right| \\
	&\leq& \|\widetilde{\psi}_i(a)^*-a^*\|_{2, \Tr} \| x\|_\infty \|b\|_\infty \|y\|_\infty + \|\widetilde{\psi}_i(b)^*-b^* \|_{2, \Tr} \| a\|_\infty \|x\|_\infty\|y\|_\infty \\
	&\to& 0, \quad \text{as } i \to \infty.
\end{eqnarray*}
Hence $\widetilde{\Omega} \circ \Psi_i$ converges pointwisely to $\widetilde{\Omega}$ on the norm dense $\ast$-subalgebra $\mathfrak{A}_0\subset \mathfrak{A}$ generated by $\pi_H(\widetilde{M}), \pi_H(\widetilde{M}^\op ), \theta_H(\widetilde{M}^\op )$, and  $\theta_H(\widetilde{M})$. 
Observe that $\|\widetilde{\Omega} \circ \Psi_i|_\mathfrak{A} \| \leq 1$ for all $i$, since the range of $\Psi_i$ is contained in $\mathrm{C}^*\{ \pi_H(\mathcal{M}_0), \theta_H(\mathcal{M}_0^\op ) \}$ and $\widetilde{\Omega}$ is bounded by one on this C$^*$-algebra by the previous claim. So we conclude $\|\widetilde{\Omega}|_\mathfrak{A}\|\leq 1$, as desired.
\end{proof}

	Observe that $\widetilde{\Omega}$ is a state, since it is positive on $\mathfrak{A}_0$ by construction, and $\widetilde{\Omega}(1)=1$. 
By the Hahn--Banach theorem, we extend $\widetilde{\Omega}$ from $\mathfrak{A}$ to $\B(H)$ which we still denote by $\widetilde{\Omega}$. By construction, it satisfies that for all $x\in \widetilde{M}$ and $u\in \mathcal{G}$,
	$$\widetilde{\Omega}(\pi_H(x)) = \Tr(pxp) \quad \text{and} \quad \widetilde{\Omega}(\pi_H( \pi(u)\theta(\bar{u})) \theta_H(\pi(u^*)^\op \theta(u^\op )^\op )) =1.$$
Putting $U(u):=\pi_H( \pi(u)\theta(\bar{u})) \theta_H(\pi(u^*)^\op \theta(u^\op )^\op )$,  the second condition implies $\widetilde{\Omega} (Y)=\widetilde{\Omega} (U(u)Y U(u)^*)$ for any $u \in \mathcal{G}$ and $Y\in \B(H)$. 
Recall that since $H=L^2(\mathcal{M})\otimes_X L^2(\mathcal{M})$, regarding $L^2(\mathcal{M})$ as a $\langle\mathcal{M}, \R \ltimes X \rangle$-$X$-bimodule, the basic construction $\langle\mathcal{M}, \R \ltimes X \rangle$ acts on $H$ by left, which we again denote by $\pi_H$, whose image commutes with $\theta_H(\mathcal{M}^\op )$. 
So if $Y \in \langle\mathcal{M}, \R \ltimes X \rangle \cap \theta(\widetilde{M}^\op )'$, then
	$$\widetilde{\Omega} (\pi_H(Y)) = \widetilde{\Omega} (U(u)\pi_H(Y) U(u)^*)=\widetilde{\Omega} (\pi_H(\pi(u))\pi_H(Y) \pi_H(\pi(u))^*)$$ 
for any $u \in \mathcal{G}$. So the state $\widetilde{\Omega}\circ\pi_H$ is a $\mathcal{G}$-central state on $\langle\mathcal{M}, \R \ltimes X \rangle \cap \theta(\widetilde{M}^\op )'$. 
Finally since $\widetilde{M} L^2(\R \ltimes X) \subset L^2(\mathcal{M})$ is dense, the von Neumann subalgebra in $\langle\mathcal{M}, \R \ltimes X \rangle \cap \theta(\widetilde{M}^\op )'$ generated by $\widetilde{M}$ and $e_{\R \ltimes X}:= 1_{L^2(\R)}\otimes P_N \otimes 1_X$, where $P_N$ is the 1-dim projection onto $\C\Lambda_{\varphi_N}(1_N)$, is canonically identified as $\langle \widetilde{M}, \widetilde{B} \rangle$ (by the fact that $e_{\R \ltimes X}\, a \, e_{\R \ltimes X} = E_{\widetilde{B}}(a) e_{\R \ltimes X}$ for $a\in \widetilde{M}$). 
Thus the restriction of $\widetilde{\Omega}\circ \pi_H$ on $\langle \widetilde{M}, \widetilde{B} \rangle$ is a $\mathcal{G}$-central state, which coincides with $\Tr$ on $p\widetilde{M}p$. 
Using the normality on $p\widetilde{M}p$ and by the Cauchy--Schwarz inequality, we obtain that $\mathcal{G}''$ is amenable relative to $\widetilde{B}$ in $\widetilde{M}$.
\end{proof}

\subsection{\bf Proof of Corollary \ref{corB}}\label{Proof of Corollary B}

\begin{proof}[Proof of Corollary \ref{corB}]
	Put $M:=N\ovt B \supset N_0 \ovt B=: M_0$ and suppose that $A\subset  M_0$ is a Cartan subalgebra. We will deduce a contradiction. For this, let $R_\infty$ be the AFD III$_1$ factor and $A_0\subset R_\infty$ a Cartan subalgebra. Up to exchanging $B$ and $A$ with $B\ovt R_\infty$ and $A\ovt A_0$ respectively, we assume that $B$ is a type III$_1$ factor (e.g.\ Lemma \ref{center of core of III1}).

	Let $\psi_{N_0}$ and $\tau_A$ be faithful normal states on $N_0$ and $A$ respectively, and $E_{N_0}$ and $E_A$ faithful normal conditional expectations from $N$ to $N_0$ and from $M_0$ to $A$ respectively. Put $\psi_A:=\tau_A\circ E_{A}$, $\psi_{N}:=\psi_{N_0}\circ E_{N_0}$, $\psi:=\psi_N\otimes \varphi_B$, $\varphi:=h\otimes\varphi_B$ and $E_{M_0}:=E_{N_0}\otimes \id_B$. Then since all continuous cores are isomorphic, we have $\Pi_{\psi_A\circ E_{M_0},\psi}\colon C_{\psi}(M) \to C_{\psi_A \circ E_{M_0}}(M)$, which restricts to $\Pi_{\psi_A, \psi_{N_0}\otimes \varphi_B}\colon C_{\psi_{N_0}\otimes \varphi_B}(M_0)\to C_{\psi_A}(M_0)$. Recall that $A\ovt L\R \subset C_{\psi_A}(M_0)$ is a Cartan subalgebra (e.g.\ \cite[Proposition 2.6]{HR10}) and hence so is the image $\widetilde{A}:=\Pi_{\varphi, \psi_A \circ E_{N_0}}(A\ovt L\R)\subset \Pi_{\varphi, \psi_A \circ E_{N_0}}(C_{\psi_A}(M_0))=:\mathcal{N}$.

\begin{claim}
	There is a conditional expectation $E \colon \langle C_{\varphi}(M), C_{\varphi_B}(B) \rangle \to \mathcal{N}$ which is faithful and normal on $C_\varphi(M)$.
\end{claim}
\begin{proof}
	We first show $A\not\preceq_{M} B$. Indeed, if $A\preceq_{M} B$, then we have $A\preceq_{M_0} B$ by Lemma \ref{intertwining lemma}. So by \cite[Lemma 4.9]{HI15}, one has $N_0=B' \cap M_0 \preceq_{M_0} A'\cap M_0=A$, which is a contradiction. Hence we have $A\not\preceq_{M} B$. 

	We apply \cite[Proposition 2.10]{BHR12} (this holds if $A$ is finite by exactly the same proof) and get $\widetilde{A} \not\preceq_{C_{{\varphi}}({M})} C_{\varphi_{{B}}}({B})$. Fix any projection $p\in \widetilde{A}$ with $\Tr(p)<\infty$, where $\Tr$ is the canonical trace on the core, and observe $p\widetilde{A}p \not\preceq_{C_{{\varphi}}({M})} C_{\varphi_{{B}}}({B})$ by definition. 
We apply Theorem \ref{thmA} to $p\widetilde{A}p$ and get that $\mathcal{N}_{pC_\varphi(M)p}(p\widetilde{A}p)''$ is amenable relative to $C_{\varphi_B}(B)$. 
Observe that $\mathcal{N}_{pC_\varphi(M)p}(p\widetilde{A}p)'' = p(\mathcal{N}_{C_\varphi(M)}(\widetilde{A})'')p$ (e.g\ \cite[Proposition 2.7]{HR10}). 
Combined with \cite[Remark 3.3]{Is17}, there is a conditional expectation $E_p \colon p\langle C_{\varphi}(M), C_{\varphi_B}(B) \rangle p \to  p\mathcal{N} p$ which restricts to the $\Tr$-preserving expectation on $pC_{\varphi}(M)p$. Taking a net $(p_i)_i$ of $\Tr$-finite projections converging to 1 weakly, one can construct a desired conditional expectation by $E(x):= \sigma\text{-weak}\mathrm{Lim}_i E_{p_i}(p_ixp_i)$ for $x \in \langle C_{\varphi}(M), C_{\varphi_B}(B) \rangle$.
\end{proof}

We apply \cite[Theorem 3.2]{Is17} to the conclusion of the claim and get that $M_0$ is amenable relative to $B$ in $M$. Hence there is a conditional expectation $F \colon \langle M,B \rangle \to M_0$ which is faithful and normal on $M$. Using the identification $\langle M,B \rangle= \B(L^2(M))\ovt B$, we can construct a conditional expectation from $\B(L^2(M))$ onto $N_0$, that means $N_0$ is injective. This is a contradiction.
\end{proof}

\small{

}
\end{document}